\numberwithin{equation}{section}
\def\kasten{$~~\mbox{\hfil\vrule height6pt width5pt depth-1pt}$ }
\newtheorem{Theorem}{Theorem}[section]
\newtheorem{Corollary}[Theorem]{Corollary}
\newtheorem{Definition}[Theorem]{Definition}
\newtheorem{Proposition}[Theorem]{Proposition}
\newtheorem{Lemma}[Theorem]{Lemma}
\newtheorem{Example}[Theorem]{Example}
\newtheorem{Remark}[Theorem]{Remark}
\newtheorem{Claim}[Theorem]{Claim}
\begin{document}

\def\kasten{$~~\mbox{\hfil\vrule height6pt width5pt depth-1pt}$ }
\makeatletter\def\theequation{\arabic{section}.\arabic{equation}}
\makeatother

\newtheorem{theorem}{Theorem}
\centerline{\large \bf A Wong-Zakai Approximation for Random Slow Manifolds}
\centerline{\large \bf \qquad\qquad\quad with Application to Parameter Estimation
\footnote{This work was partly supported
by the
 NSFC grants 11301197,   11301403,   11371367 and 11271290.}\hspace{2mm}
\vspace{1mm}\vspace{1mm}\\ }
\bigskip
\centerline{\bf Ziying He$^{a,b,}\footnote{ziyinghe@hust.edu.cn}$,
Xinyong Zhang$^{c,}\footnote{zhangxinyong12@mails.tsinghua.edu.cn}$,
Tao Jiang$^{d,}\footnote{tjiang1985@gmail.com}$ and 
Xianming Liu$^{b,}\footnote{xmliu@hust.edu.cn}$}
\smallskip
\centerline{${}^a$ Center for Mathematical Sciences,} 
 \centerline{${}^b$School of Mathematics and Statistics,}
 \centerline{Hubei Key Laboratory of Engineering Modeling and Scientific Computing,} 
\centerline{Huazhong University of Sciences and Technology, Wuhan 430074,  China} 
\centerline{${}^c$ Department of Mathematical Sciences} \centerline{Tsinghua University, Beijing 100084,  China} 
\centerline{${}^d$ Collaborative Innovation Center of China Pilot Reform Exploration and Assessment,} \centerline{ Hubei Sub-Center,}
\centerline{Hubei University of Economics, Wuhan, 430205,  China} 

\begin{abstract}
We study a Wong-Zakai approximation for the random slow manifold of a slow-fast stochastic dynamical system. We first deduce the existence of the random slow manifold about an approximation system driven by  an integrated Ornstein-Uhlenbeck (O-U) process.  Then we compute the  slow manifold of the approximation system, in order to gain insights of the long time dynamics of the original stochastic system. By restricting this approximation system to its slow manifold, we  thus get a reduced slow random system. This reduced slow random system is used to accurately estimate a system parameter of the original system. An example is presented to illustrate this approximation.
\end{abstract}

\medskip\par\noindent
{\bf Key Words and Phrases}: Random slow manifold, multi-scale dynamics,  Wong-Zakai approximation, integrated O-U processes, parameter estimate\\

{\footnotesize \textbf{2010 Mathematics Subject Classification}: Primary: 37L55, 35R60; Secondary: 60H15, 58J65.}

\section{Introduction}
An Ornstein-Uhlenbeck (O-U) process is introduced historically to depict the velocity of the particle in Brownian motion. Its integration, the integrated O-U process, is regarded as the displacement of the particle \cite{G. E. Uhlenbeck and L. S. Ornstein}. The O-U process has a frequency-dependent power spectral density. Due to this characteristic behavior of its power spectral density, it is also called a colored noise to be distinguished from white noise \cite{R. Kazakevicius and J. Ruseckas}. When the correlation of collisions between the Brownian particle and the surrounding liquid molecules leads to a situation where the finite correlation time becomes important,  the system driven by colored noise instead of white noise deserves investigation \cite{R. Kazakevicius and J. Ruseckas}.  Numerous realistic models \cite{E. Bibbona and G. Panfilo and P. Tavella, W. Horsthemke and R. Lefever, R. Kazakevicius and J. Ruseckas},  analytical works \cite{T. Blass and L. A. Romero,  R. Hintze and I. Pavlyukevich} and numerical simulations  \cite{M. Kamrani, Q. Du and T. Zhang} concern the system driven by colored noise. When the correlation time tends to vanish, the colored noise will approach the white noise. According to the  Wong-Zakai theorem \cite{wongzakai, wongzakai2}, the system driven by colored noise (or integrated O-U process) will converge to the system driven by white noise (or Brownian motion). Inheriting the benefits of the general Wong-Zakai approximation, the system driven by integrated O-U process is a random differential equation (RDE). The RDE can be regarded as the ordinary differential equation (ODE) pathwisely. And it can be represented by deterministic Riemann integral which is more robust to approximate than stochastic integral in view of their definition. Thus it is easier to simulate a RDE than a stochastic differential equation (SDE) \cite{F. Cucker, P. E. Kloeden and A. Jentzen}. But differing from the piecewise linear approximation to Brownian motion \cite{brzf}, the integrated O-U process has a continuous derivative. This analytic property makes the system easier to analyze to some extent \cite{P. Acquistapace}. Due to these  characteristics, it is meaningful to study both the dynamical behavior of system driven by integrated O-U process and the approximation property when the correlation time tends to infinity. In fact, Wong-Zakai theorem is extended to other situations \cite{brz,brzf,igy1, igy2,mar,igy, kon}. The dimension of the state space has grown from one to finite, and then infinity. That means the state space can be a general Hilbert space. The driving process is extended from Brownian motion to semimartingales. Various modes of convergence are considered, such as convergence in the mean square, in probability, and almost surely. The rate of the convergence has also been examined  \cite{brz,igy}, which improves the accuracy of the approximation to a higher level.

Many dynamical systems involve the interplay of two time scales. For example, the Lorenz-Krishnamurthy model for inertia-gravity waves depicting the circulation of  atmosphere and ocean, the FitzHugh-Nagumo system which is a simplification of the Hodgkin-Huxley model for an electric potential of a nerve axon, the van der Pol  oscillator for a vacuum tube triode circuit, the settling of inertial particles under uncertainty, and the stiff stochastic chemical systems \cite{N. Berglund and B. Gentz, Dimitris A. Goussis, Xiaoying Han, ChristianK, E.N. Lorenz, Ren jian 2}. The dimension of a two-scale system can be reduced to that of the slow variables by a slow manifold with exponential tracking property if it exists. The slow manifold is considered in Fenichel's theorem in singular perturbation theory \cite{ChristianK} (corresponding to the adiabatic manifold \cite{N. Berglund and B. Gentz}). This concept stems from Leith (1980) for weather forecast, then is explored by Lorenz through Lorenz-Krishnamurthy model \cite{C. E. Leith, E.N. Lorenz2, E.N. Lorenz}. The slow manifold of slow-fast system is a special invariant manifold which is studied extensively \cite{N. Berglund and B. Gentz, J. Duan W. Wang, Dimitris A. Goussis, Xiaoying Han, ChristianK, E.N. Lorenz, Ren jian 2}. Solutions on it evolve relatively slow compared to the fast variables. If the fast variables decay with an exponential velocity, then they can be eliminated by confining trajectories to the slow manifold. The random slow manifold of a two-scale stochastic partial differential equation (SPDE) driven by Brownian motion in Hilbert space is considered in \cite{HongboFuSlowmf}. There exists a Lipschitz random slow manifold with exponential tracking property in a two-scale SPDE under suitable conditions \cite{HongboFuSlowmf}. Since the random invariant manifold has a Wong-Zakai approximation with Brownian motion replaced by integrated O-U process \cite{TaoJiang} which bears many benefits mentioned before. To conduct a research about the Wong-Zakai approximation for the random slow manifold of a two-scale SPDE is meaningful and feasible. Furthermore, the reduced system by confining trajectories to the random slow manifold   captures some quantitative properties about the original system. Moreover, this reduced, slow system
also provides an accurate estimate on the original system parameter, which  reduces the amount of information needed before making an estimation and lows the cost to simulate \cite{Ren jian}. The computational method of parameter estimate will be further simplified by replacing the original random slow manifold by its Wong-Zakai approximation. This is due to the simulating robustness of RDEs compared to SDEs, which is mentioned in the previous paragraph.

In this paper, we consider the random slow manifold and its Wong-Zakai approximation for a slow-fast system of the SPDE. The settings and main results are  in Section \ref{setresult}.
Section \ref{convert} is about converting the original system and the Wong-Zakai approximation system into comparable random partial differential equation (RPDE) systems.
In Section \ref{pfexwsm}, we prove that the  RPDE (\ref{eq:newRDE}) driven by the colored noise exists a Lipschitz random slow manifold with exponential tracking property. Furthermore we prove that this random slow manifold of the RPDE with colored noise  approximates that of the SPDE with white noise. In addition, the random slow manifold of the RPDE with colored noise can exponentially track all orbits of the SPDE with white noise. This permits us to project the SPDE with white noise to the random slow manifold of the RPDE with colored noise, and get a lower dimensional deterministic system pathwisely. In Section \ref{sec:estimate}, we show that this Wong-Zakai type reduced system can quantify the unknown system parameter with a high accuracy. We illustrate the previous procedure through a simple example.

\section{Settings and main results}\label{setresult}
\subsection{Settings}
It is already known that Brownian motion $B_t$ can be approximated uniformly by O-U processes almost surely on a finite interval   \cite{P. Acquistapace}. This fact is shown with a few pictures by different samples. Figure \ref{path}   indicates that the sharp points of Brownian motion are smoothed by an integrated O-U process (see $\Phi^{\mu}_{t}(\omega)$ below). This is coincide with the fact that the path of Brownian motion is nowhere differentiable, while that of integrated O-U process is continuously differentiable.
\begin{figure}[H]
\centering
\includegraphics[width=2in]{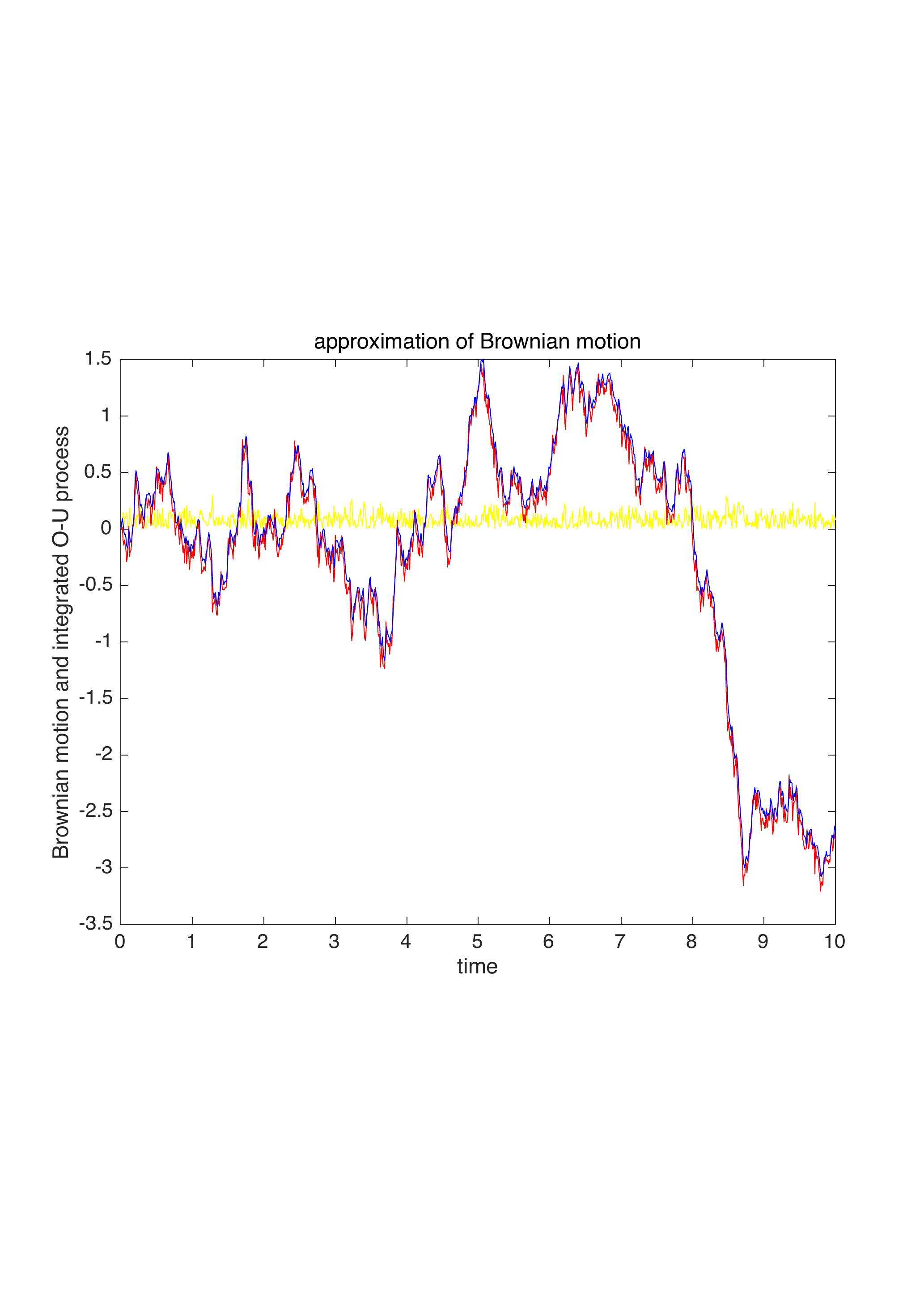}
\includegraphics[width=2in]{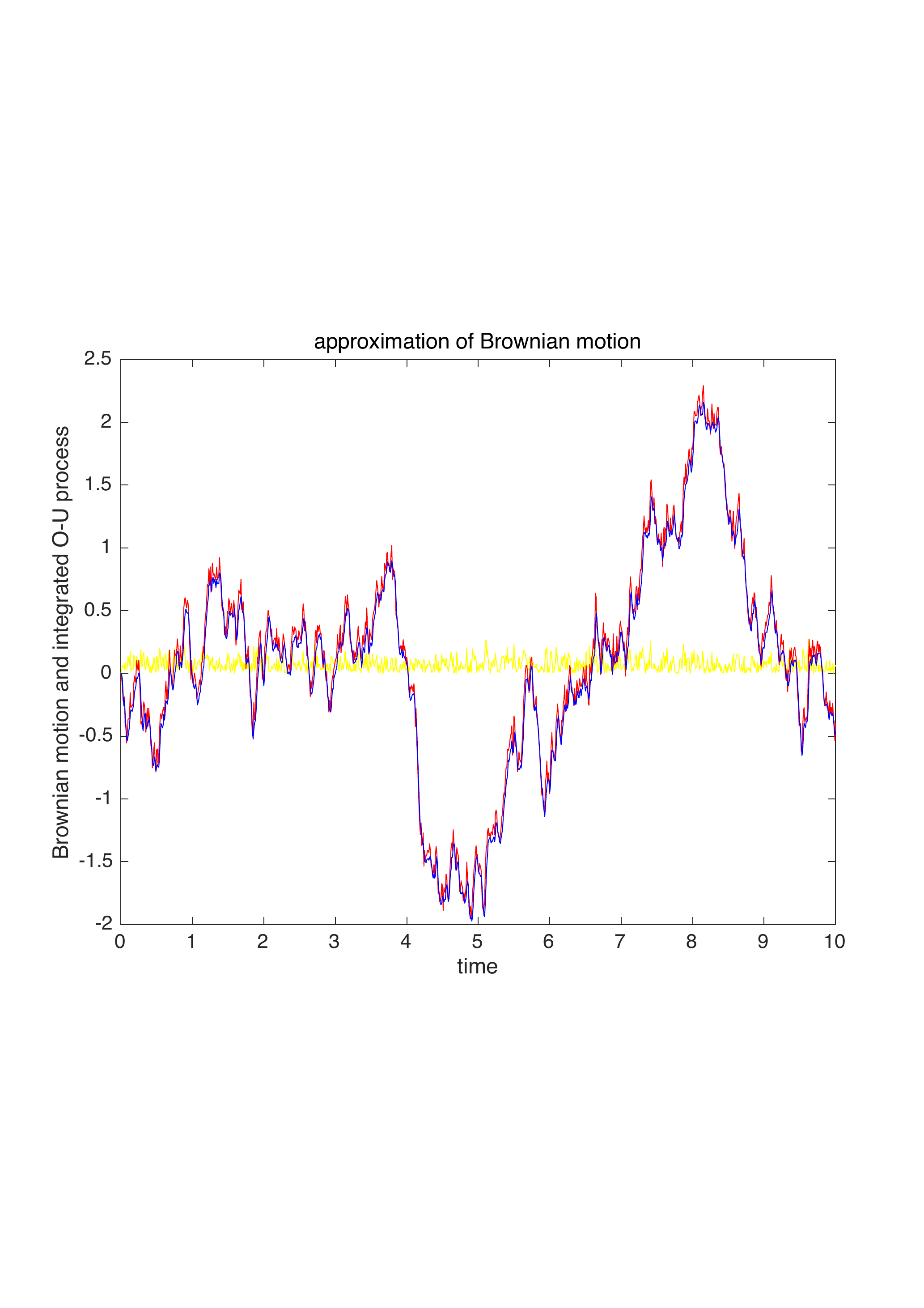}
\includegraphics[width=2in]{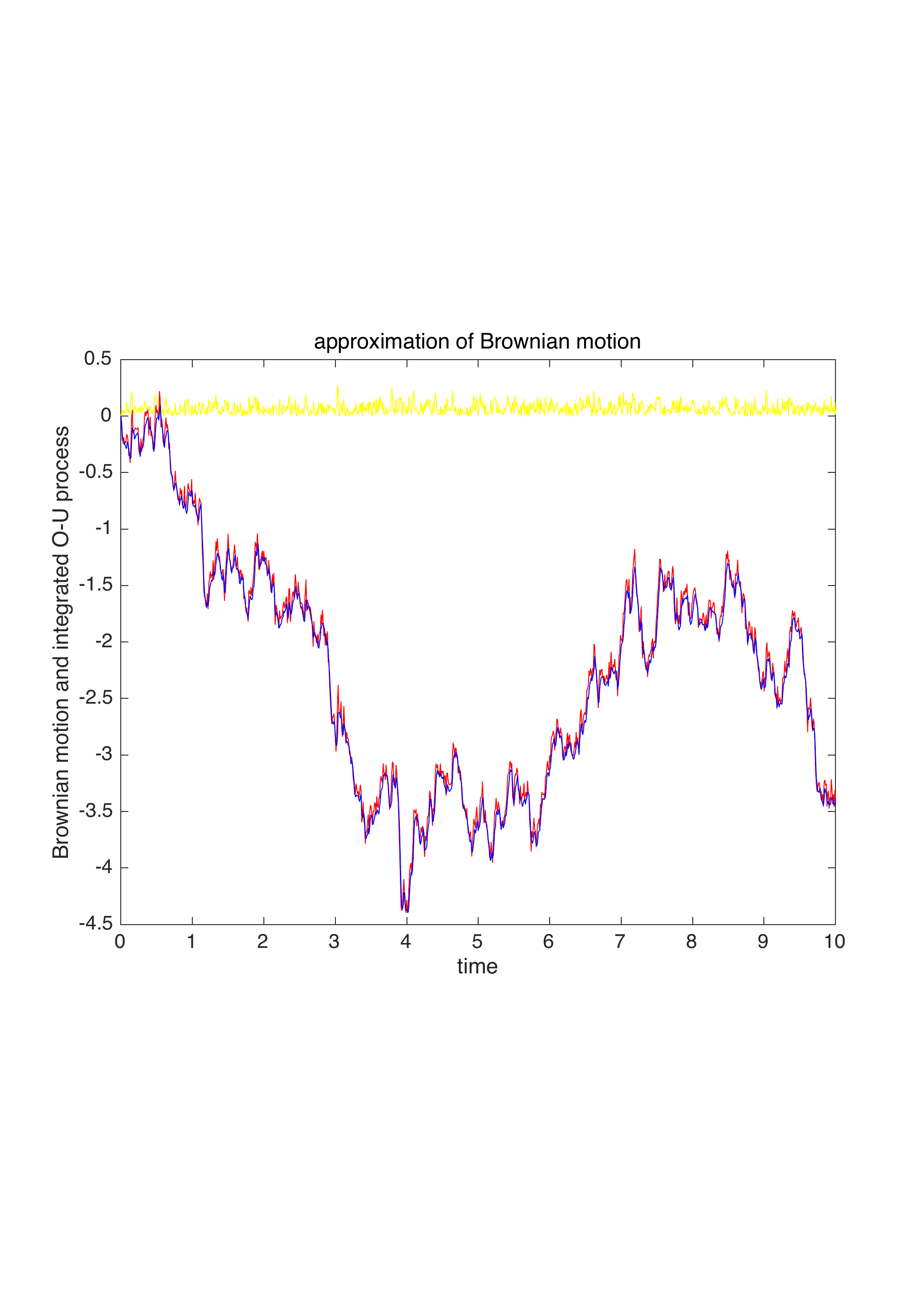}
\caption{(Color online) Three samples of Brownian motion and integrated O-U process with red and blue colour respectively. The Wong-Zakai approximation parameter $\mu=0.1$. The error is plotted with yellow line.}\label{path}
\end{figure}
\begin{figure}[H]
\centering
\includegraphics[width=2.5in]{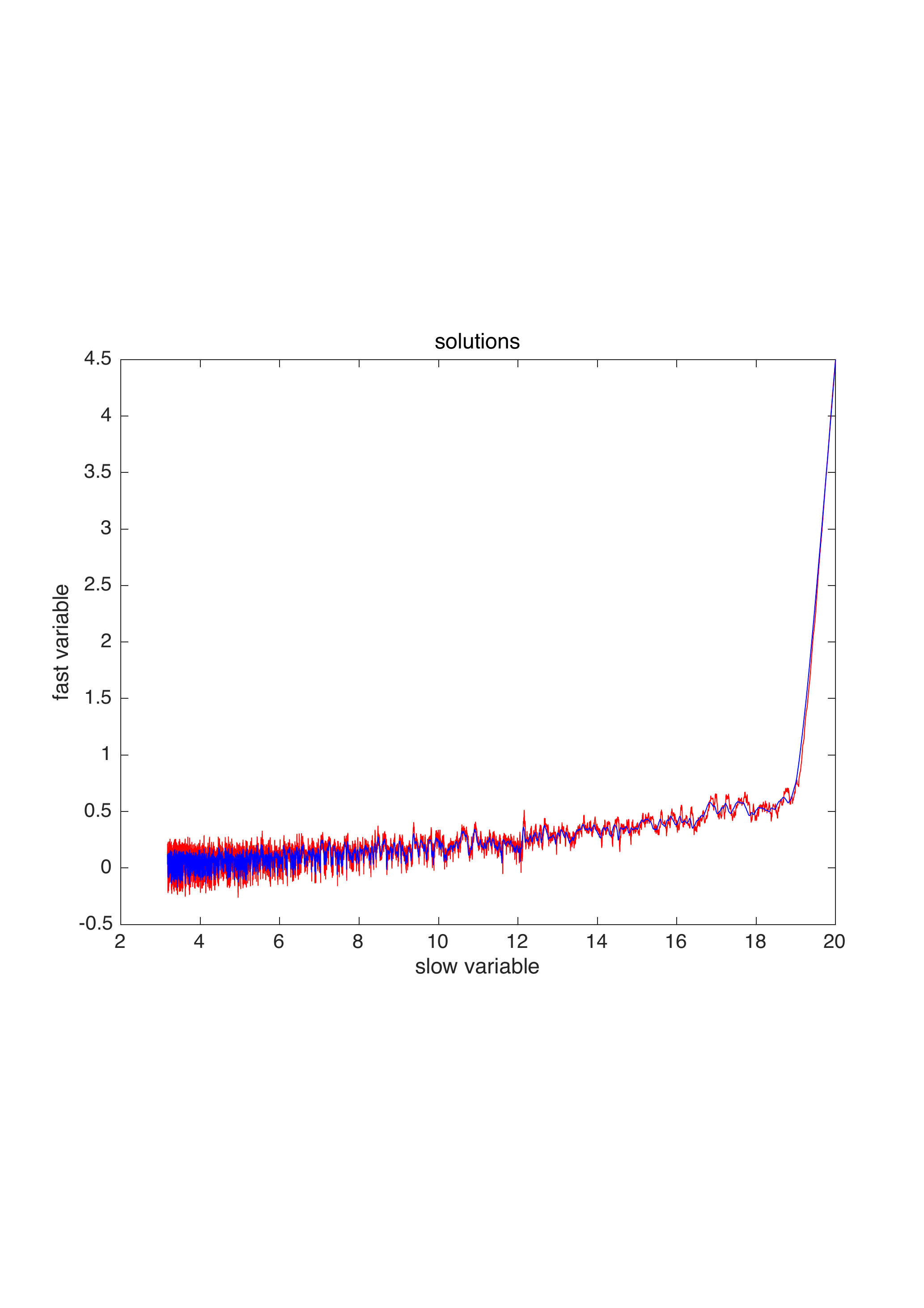}
\caption{(Color online) One Solution of SDE driven by Brownian motion in red line and the corresponding solution of the RDE by an integrated O-U process in blue line,  respectively:  the Wong-Zakai approximation parameter  $\mu=0.01$.} \label{solution}
\end{figure}

Furthermore, the solution of a SDE driven by Brownian motion can be approximated by that of a RDE driven by an integrated O-U process   \cite{P. Acquistapace, wongzakai2, wongzakai}. We shall denote the former system as the original system, and the latter as the Wong-Zakai system. It is exhibited in figure \ref{solution} by an example \ref{example} from \cite{Ren jian} used in Section \ref{sec:estimate}.

We wonder if the random slow manifold of a two-scale stochastic system driven by Brownian motion can be approximated by that of the corresponding system driven by an integrated O-U process. We shall refer to the former as the original random slow manifold and the latter as the  Wong-Zakai random slow manifold.
We consider this question in probability space $(\Omega, \mathcal{F}, \mathbb{P})$ and separable Hilbert space $(\mathbb{H},||\cdot||_{\mathbb{H}})$ with
$$\mathbb{H}=\mathbb{H}_{1}\times \mathbb{H}_{2},$$
where $(\mathbb{H}_{1},||\cdot||_{\mathbb{H}_{1}})$ and $(\mathbb{H}_{2},||\cdot||_{\mathbb{H}_{2}})$ are both separable Hilbert spaces and the norm $||\cdot||_{\mathbb{H}}=||\cdot||_{\mathbb{H}_{1}}+||\cdot||_{\mathbb{H}_{2}}$.
The original slow-fast stochastic system is
\begin{eqnarray}\label{eq:orgSDE}
\left\{\begin{array}{l}
\begin{aligned}
\dot{u}^{\varepsilon}=\frac{1}{\varepsilon}Au^{\varepsilon}+\frac{1}{\varepsilon}f(u^{\varepsilon},v^{\varepsilon})+\frac{\sigma}{\sqrt\varepsilon}\dot{B}_{t}(\omega), \qquad  \hbox{in}\,\,&\mathbb{H}_{1},\\
\dot{v}^{\varepsilon}=Bv^{\varepsilon}+g(u^{\varepsilon},v^{\varepsilon}),  ~~~~~~~~~~~~~~~~~~~~\qquad\hbox{in}\,\,&\mathbb{H}_{2}.
\end{aligned}
\end{array}
\right.
\end{eqnarray}
Here $\varepsilon$ is a small positive scale parameter. The linear operators $A, B$ and nonlinear interactions $f, g$ will be specified below. This covers slow-fast systems of SDEs  or SPDEs.

We introduce an O-U process $z^{\mu}(\theta_{t}\omega)=z^{\mu}_{t}(\omega)$ as the stationary solution of the following scalar linear SDE with a parameter $\mu$:
\begin{eqnarray}\label{O-U}
\left\{\begin{array}{l}
\dot{z}^{\mu}_{t}=-\frac{1}{\mu}z^{\mu}_{t}+\frac{1}{\mu}\dot{B}_{t}(\omega),\\
z^{\mu}_{0}=\frac{1}{\mu}\int_{-\infty}^{0}e^{\frac{s}{\mu}}\mathrm{d}B_{s}(\omega).
\end{array}
\right.
\end{eqnarray}
This O-U process is a correlated process ( `colored noise').
The integrated O-U process $\Phi^{\mu}_{t}(\omega)$ is defined as the time integration of  `colored noise', that is
\begin{eqnarray}\label{zakai}
\begin{aligned}
\Phi^{\mu}_{t}(\omega)
=&\int_{0}^{t}z^{\mu}(\theta_{s}\omega)\mathrm{d}s
=\frac{1}{\mu}\int_{0}^{t}\int_{-\infty}^{s}e^{-\frac{s-r}{\mu}}\mathrm{d}B_{r}(\omega) \mathrm{d}s.
\end{aligned}
\end{eqnarray}

The corresponding Wong-Zakai system with Brownian motion $B_t$ replaced by an integrated O-U process is
\begin{eqnarray}\label{eq:newRDE}
\left\{\begin{array}{l}
\begin{aligned}
\dot{x}^{\mu,\varepsilon}=\frac{1}{\varepsilon}Ax^{\mu,\varepsilon}+\frac{1}{\varepsilon}f(x^{\mu,\varepsilon},y^{\mu,\varepsilon})+\frac{\sigma}{\sqrt\varepsilon}\dot{\Phi}^{\mu}_{t}(\omega), \qquad~ \hbox{in}\,\, &\mathbb{H}_{1},\\
\dot{y}^{\mu,\varepsilon}=By^{\mu,{\varepsilon}}+g(x^{\mu,\varepsilon},y^{\mu,\varepsilon}), ~~~~~~~~~~~~~~~~~~~~~~\qquad\hbox{in} \,\,&\mathbb{H}_{2}.
\end{aligned}
\end{array}
\right.
\end{eqnarray}
with the same initial condition
$$w_{0}\triangleq(u_{0},v_{0})=(\eta,\xi)\triangleq\zeta,$$
$$z_{0}\triangleq(x_{0},y_{0})=(\eta,\xi)\triangleq\zeta,$$
where the linear operators $A$ and $B$ are the generators of $C_{0}-$semigroups on separable Hilbert space $\mathbb{H}_{1}$ and $\mathbb{H}_{2}$, respectively. Nonlinear functions $f$ and $g$ are continuous functions mapping from $\mathbb{H}$ to $\mathbb{H}_{1}$ and $\mathbb{H}_{2}$ respectively, with $f(0,0)=g(0,0)=0$. The scale parameter is $\varepsilon$.  The noise intensity $\sigma$ is the element of  the definition domain of $A$ satisfying $||A\sigma||\mathbb{H}_{1}<\infty$. The definition domain of $A$ is denoted by $D(A)$ and included in $\mathbb{H}_{1}$. The definition domain of $B$ is denoted by $D(B)$ and included in $\mathbb{H}_{2}$. Brownian motion $B_{t}(\omega)$ is one dimensional and considered in the canonical probability space $(C(\mathbb{R}, \mathbb{R}), \mathcal{B}(\mathbb{R},\mathbb{R}), \mathbb{P}_{B})$ (see $\cite{JinqiaoDuan}$) with compact open topology.

Both the original system and the corresponding Wong-Zakai system are slow-fast system for small $\varepsilon$.
Variables $u$ and $x$ are the fast components  in space $\mathbb{H}_{1}$, while  $v$ and $y$ are the slow components  in space $\mathbb{H}_{2}$.

\medskip
We make the following assumptions.
\begin{itemize}
\item$(A1)$ (Spectral condition)  The linear  operator A is the generator of a $C_{0}-$semigroup $e^{At}$ on $\mathbb{H}_{1}$ satisfying
$$||e^{At}x||_{\mathbb{H}_{1}}\leq e^{-\gamma_{1}t}||x||_{\mathbb{H}_{1}}, \quad t\geq0,$$
for all $x$ in $\mathbb{H}_{1}$ and some positive constant $\gamma_{1}$.\\
The linear operator $B$ is the generator of a $C_{0}-$group $e^{Bt}$ on $\mathbb{H}_{2}$ satisfying
$$||e^{Bt}y||_{\mathbb{H}_{2}}\leq e^{-\gamma_{2}t}||y||_{\mathbb{H}_{2}}, \quad t\leq0,$$
for all $y$ in $\mathbb{H}_{2}$ and some positive constant $\gamma_{2}$.

\item$(A2)$ (Lipschitz condition) The nonlinear functions $f$ and $g$ satisfy the Lipschitz condition. There exists a positive constant $K$ such that for all $(x_{i},y_{i})\in \mathbb{H}_{1}\times \mathbb{H}_{2}$
\begin{equation*}
\begin{split}
 ||f(x_{1},y_{1})-f(x_{2},y_{2})||_{\mathbb{H}_{1}}\leq K(||x_{1}-x_{2}||_{\mathbb{H}_{1}}+||y_{1}-y_{2}||_{\mathbb{H}_{2}}),\\
 ||g(x_{1},y_{1})-g(x_{2},y_{2})||_{\mathbb{H}_{2}}\leq K(||x_{1}-x_{2}||_{\mathbb{H}_{1}}+||y_{1}-y_{2}||_{\mathbb{H}_{2}}).
 \end{split}
 \end{equation*}
\item$(A3)$ (Gap condition) Assume that the Lipschitz constant $K$ of the nonlinear terms  is smaller than the decay rate $\gamma_{1}$ of $A$, that is
 $$K<\gamma_{1}.$$
 \end{itemize}
 \begin{Remark}
 The spectral condition $(A1)$ is about  the spectral set of operators $A$ and $B$,  relative to the  bounds  $\gamma_{1}$ and $\gamma_{2}$. Let $A=\Delta-\alpha I_{id}$ with domain $D(A)=H^{2}\cap H_{0}^{1}$ and some positive $\alpha$. Take $\mathbb{H}_{1}=L^{2}(D)$. Then $A$ generates a contraction semigroup $\{e^{At}:t\geq0\}$ in $\mathbb{H}_{1}$ and satisfies $(A1)$ with $\gamma_{1}=\alpha$. Let $\tilde{B}=\Delta-\beta I_{id}$ with domain $D(\tilde{B})=H^{2}\cap H_{0}^{1}$ and some positive $\beta$. Define 
\begin{displaymath}
\qquad\qquad\qquad\qquad\qquad
y=\left(\begin{array}{cc}y_{1}\\  y_{2}\end{array}\right),
B=\left(\begin{array}{cc}0 & I_{id}\\  \tilde{B} &0\end{array}\right),
\end{displaymath}
and $\mathbb{H}_{2}=H_{0}^{1}(D)\times L^{2}(D)$. Define the norm $||y||_{\mathbb{H}_{2}}=(||y_{1}||^2_{H_{0}^{1}}+||y_{2}||^2_{L^{2}})^{1/2}$. Let $D(B)=D(\tilde{B})\times H^{1}$. Then $B$ generates an unitary group in $\mathbb{H}_{2}$ satisfies conditions $(A1)$ with $\gamma_{2}=0$. More details and examples are in \cite{Chueshov, HongboFuSlowmf}.
 \end{Remark}
\begin{Remark}\label{solvesolution}
Under the preceeding  assumptions, there exists a unique solution of the original stochastic system (\ref{eq:orgSDE}). Refer to \cite{J. Duan K. Lu and B. Schmalfuss 2} and the references therein. We denote it as
$w^{\varepsilon}(t,\omega,\zeta)=(u^{\varepsilon}(t,\omega,\zeta),v^{\varepsilon}(t,\omega,\zeta))$
simply as $w^{\varepsilon}(t)=(u^{\varepsilon}(t),v^{\varepsilon}(t))$. In the mild   form, the solution solves  an integral stochastic system
\begin{equation}
\left\{\begin{array}{l}
u^{\varepsilon}(t)=e^{\frac{A}{\varepsilon}t}\eta+\frac{1}{\varepsilon}\int_{0}^{t}e^{\frac{A}{\varepsilon}(t-s)}f(u^{\varepsilon}(s),v^{\varepsilon}(s))\mathrm{d}s+\frac{1}{\sqrt{\varepsilon}}\int_{0}^{t}e^{\frac{A}{\varepsilon}(t-s)}\sigma\mathrm{d}B_{s}(\omega),\\
v^{\varepsilon}(t)=e^{Bt}\xi+\int_{0}^{t}e^{B(t-s)}g(u^{\varepsilon}(s),v^{\varepsilon}(s))\mathrm{d}s.
\end{array}
\right.
\end{equation}
The solution of Wong-Zakai system (\ref{eq:newRDE}) is denoted as $z^{\mu,\varepsilon}(t,\omega,\zeta)=(x^{\mu,\varepsilon}(t,\omega,\zeta),y^{\mu,\varepsilon}(t,\omega,\zeta))$ simply as $z^{\mu,\varepsilon}(t)=(x^{\mu,\varepsilon}(t),y^{\mu,\varepsilon}(t))$. In the mild sense, it is
\begin{equation}
\left\{\begin{array}{l}
x^{\mu,\varepsilon}(t)=e^{\frac{A}{\varepsilon}t}\eta+\frac{1}{\varepsilon}\int_{0}^{t}e^{\frac{A}{\varepsilon}(t-s)}f(x^{\mu,\varepsilon}(s),y^{\mu,\varepsilon}(s))\mathrm{d}s+\frac{1}{\sqrt{\varepsilon}}\int_{0}^{t}e^{\frac{A}{\varepsilon}(t-s)}\sigma z^{\mu}_{s}(\omega)\mathrm{d}s,\\
y^{\mu,\varepsilon}(t)=e^{Bt}\xi+\int_{0}^{t}e^{B(t-s)}g(x^{\mu,\varepsilon}(s),y^{\mu,\varepsilon}(s))\mathrm{d}s.
\end{array}
\right.
\end{equation}
\end{Remark}

\medskip
The definition of a random slow manifold is introduced in the remainder of this subsection based on references \cite{ Ludwig Arnold, JinqiaoDuan, HongboFuSlowmf, B. Schmalfuss, E.N. Lorenz}.

 Consider the canonical sample space $\Omega=C_{0}(\mathbb{R},\mathbb{R})$ and the Borel $\sigma-$algebra $\mathcal{F}=\mathcal{B}(C_{0}(\mathbb{R},\mathbb{R}))$. The sample space $\Omega$ is composed of real continuous functions which are defined on $\mathbb{R}$ and equal to zero at time $0$ . \textit{Wiener shift} $\theta_{t}$ maps the canonical sample space $C_{0}(\mathbb{R},\mathbb{R})$ into itself with $\theta_{t}\omega(s)=\omega(t+s)-\omega(t)$ for each fixed $t$ and every $s$ in $\mathbb{R}$. The distribution of $\theta_{t}\omega$ generates the Wiener measure $\mathbb{P}$, which is ergodic with respect to $\theta_{t}$.
Wiener shift $\theta_{t}$ has the following properties:
\begin{enumerate}
\item$\ \theta_{0}=id_{\Omega};$
\item$\ \theta_{t}\theta{s}=\theta_{t+s},$ for all $s,t$ in $\mathbb{R}$;
\item The mapping $\theta$ from $\mathbb{R}\times C_{0}(\mathbb{R},\mathbb{R})$ to $C_{0}(\mathbb{R},\mathbb{R})$ is measurable and $\theta_{t}\mathbb{P}=\mathbb{P}$ for all $t\in\mathbb{R}$.
\end{enumerate}
This forms a metric dynamical system $(\Omega,\mathcal{F},\mathbb{P},(\theta_{t})_{t\in\mathbb{R}})$ (refer to \cite{Ludwig Arnold}). We need Lemma $2.1$ in \cite{J. Duan K. Lu and B. Schmalfuss 2} about the properties of Brownian motion and O-U process to prove our results. Therefore we will work on the metric dynamical system used in \cite{J. Duan K. Lu and B. Schmalfuss 2} which is mentioned after Lemma $2.1$ in \cite{J. Duan K. Lu and B. Schmalfuss 2}. To make this lemma hold true, the sample space is restricted to be a subset of $C_{0}(\mathbb{R},\mathbb{R})$. This subset belongs to $\mathcal{B}(C_{0}(\mathbb{R},\mathbb{R}))$ with full measure with respect to $\mathbb{P}$ and is $(\theta_{t})_{t\in\mathbb{R}}$ invariant. The Borel $\sigma-$algebra $\mathcal{B}(C_{0}(\mathbb{R},\mathbb{R}))$ is  restricted on this subset. And the Wiener measure is restricted on the new restricted $\sigma-$algebra. We still denote this new metric dynamical system as $(\Omega,\mathcal{F},\mathbb{P},(\theta_{t})_{t\in\mathbb{R}})$.

\medskip

\begin{Definition}(Random Dynamical System)
A measurable random dynamical system on a measurable space $(\mathbb{H},||\cdot||_{\mathbb{H}})$ with Borel $\sigma$-field $\mathcal{B}(\mathbb{H})$ over the metric dynamical system
$(\Omega,\mathcal{F},\mathbb{P},(\theta_{t})_{t\in\mathbb{R}})$ is a mapping
$$\varphi : \mathbb{R}_{+}\times\Omega\times\mathbb{H}\to\mathbb{H},$$
with the following properties:
\begin{enumerate}
\item Measurability: $\varphi$ is $(\mathcal{B}(\mathbb{R}_{+})\times\mathcal{F}\times\mathcal{B}(\mathbb{H}),\,\mathcal{B}(\mathbb{H}))$-measurable.
\item Cocycle property: The mappings $\varphi(t,\omega):=\varphi(t,\omega,\cdot):\mathbb{H}\to\mathbb{H}$ form a cocycle over $\theta(\cdot)$, that is
$$\varphi(0,\omega)=id_{\mathbb{H}}\qquad \hbox{for all  }\quad\omega\in\Omega,$$
$$\varphi(t+s,\omega)=\varphi(t,\theta_{s}\omega)\circ\varphi(s,\omega)\qquad \hbox{for all }\quad s,t\in\mathbb{R}_{+},\,\,\omega\in\Omega.$$
\end{enumerate}
\end{Definition}

\begin{Definition}(Random set)\label{randomset}
A \textit{random set} is a family of nonempty closed sets  $M={M(\omega)}$ included in a metric space $(\mathbb{H},||\cdot||_{\mathbb{H}})$ satisfying the following condition: the mapping
$$\omega\to\inf_{z_{1}\in M(\omega)}||z_{1}-z_{2}||_{\mathbb{H}}$$
from $\Omega$ to $\mathbb{R}$ is a random variable for every $z_{2}\in\mathbb{H}$.
\end{Definition}
\begin{Definition}(Random slow manifold)\label{dfrsm}
A \textit{random slow manifold} of a random dynamical system $\varphi$ generated by a two-scale system is a random set $M(\omega)=\{(h(\omega,y),y):y\in \mathbb{H}_{2}\}$, satisfying the following conditions:
\begin{enumerate}
\item The random set $M(\omega)$ is invariant with respect to the random dynamical system $\varphi$, that is
$$\varphi(t,\omega,M(\omega))\subset M(\theta_{t}\omega).$$
\item The function $h(\omega,y)$ is globally Lipschitzian in $y$ for all $\omega\in\Omega$. The mapping $\omega\to h(\omega,y)$ is a random variable for any $y\in \mathbb{H}_{2}$.

\end{enumerate}
\end{Definition}
\begin{Remark}
Here the definition of the random slow manifold is based on \cite{HongboFuSlowmf, B. Schmalfuss,  E.N. Lorenz}. Inherit the opinion of Edward N. Lorenz, the random slow manifold is composed of certain initial points whose orbits evolve at a slow level rate. The difference is that the orbits through these initial points will stay in another random set with the sample of the random slow manifold replaced by its evolvement under the Wiener shift. While the deterministic slow manifold is invariant under the deterministic dynamical system, which ensures  that the whole orbit starting from one element of this set are included in the same set always. We illustrate this property in figure \ref{figinvariant}. Besides, the fast variables are the Lipschitz function of slow variables on these orbits. This guarantees that the fast variables can be controlled by slow variables. Furthermore, the graph of the random slow manifold with one fixed sample is smoother than the phase picture of solutions of stochastic system, which can be seen from figure \ref{figsm}.
\end{Remark}
Figure \ref{figinvariant} exhibits that the solution of stochastic system (\ref{exnewSDE}) starting from this curve will be on another curve $\mathcal{M}^{\varepsilon}(\theta_{t}\omega)$ at time $t$ instead of always on $\mathcal{M}^{\varepsilon}(\omega)$, which is different from the deterministic circumstance. In the deterministic case, the two curves expressed by $\mathcal{M}^{\varepsilon}(\theta_{t}\omega)$ and $\mathcal{M}^{\varepsilon}(\omega)$ are the same, because there is only one sample which implies $\theta_{t}\omega=\omega$.
One sample of the random slow manifold of system (\ref{exnewSDE}) in example \ref{example} is, a curve, the cross section taking $t=0$ of picture (a) in Figure \ref{figinvariant}. 
\begin{figure}[H]
\centering
\includegraphics[width=4in]{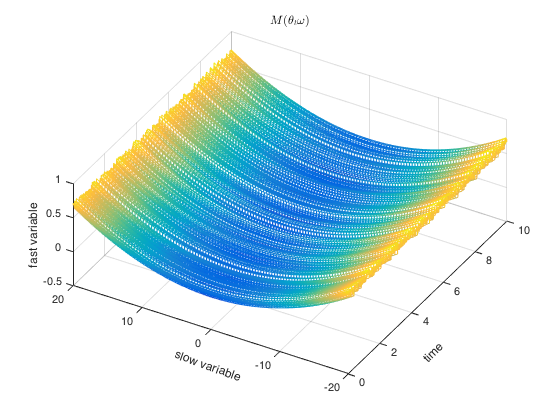}
\includegraphics[width=4in]{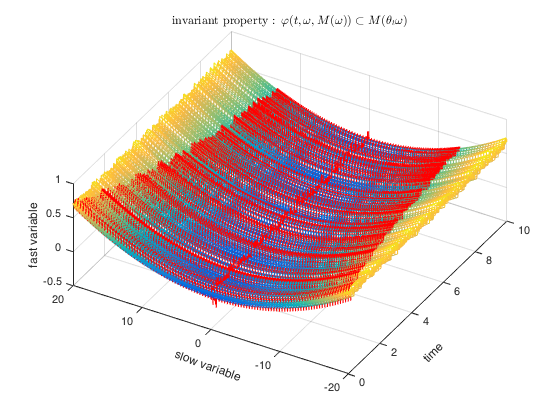}
\begin{center}(a)\hspace{8cm}(b) \end{center}
\caption{(Color online) A family of random slow manifolds $M(\theta_{t}\omega)$ with one sample $\omega$ evolving under the Wiener shift $\theta_{t}$ in picture (a). The evolution of the random slow manifold $M(\omega)$ under the random dynamical system $\varphi$ in picture (b). This shows the invariant property, $\varphi(t,\omega,M(\omega))\subset M(\theta_{t}\omega)$, in definition \ref{dfrsm}. This property is illustrated through the example \ref{example}, in which $M(\omega)=\mathcal{M}^{\varepsilon}(\omega)$ and $M(\theta_{t}\omega)=\mathcal{M}^{\varepsilon}(\theta_{t}\omega)$. The expressions of $\mathcal{M}^{\varepsilon}(\omega)$ and $\mathcal{M}^{\varepsilon}(\theta_{t}\omega)$ are computed approximately in example \ref{example}.}\label{figinvariant}
\end{figure}
\subsection{Main results}
From \cite{HongboFuSlowmf}, it is clear that the original random slow manifold of the original system (\ref{eq:orgSDE}) exists under certain conditions. We just need to find the Wong-Zakai random slow manifold of Wong-Zakai system (\ref{eq:newRDE}).
Introduce a Banach space containing continuous functions controlled by some exponential rate:
$$C^{i,-}_{\alpha}=\left\{\varphi:(-\infty,0]\mapsto H_{i}\,\hbox{ is continuous and}\,  \sup_{t\leq0}e^{-\alpha t}||\varphi(t)||_{H_{i}}<\infty \right\}$$
with the norm $||\varphi||_{C_{\alpha}^{i,-}}=\sup_{t\leq0}e^{-\alpha t}||\varphi(t)||_{H_{i}}$ for $i=1,2$. Let $C_{\alpha}^{-}$ be the product Banach space $C_{\alpha}^{-}=C_{\alpha}^{1,-}\times C_{\alpha}^{2,-}$ with the norm
$$||z||_{C_{\alpha}^{-}}=||x||_{C_{\alpha}^{1,-}}+||y||_{C_{\alpha}^{2,-}}, \quad\mbox{for } z=(x,y)\in C_{\alpha}^{-}.$$
We will show that the random orbits which are the fixed points under some operator acting on some Banach space $C_{\alpha}^{-}$ form a random set. This set is precisely the Wong-Zakai random slow manifold of Wong-Zakai system (\ref{eq:newRDE}).
\noindent
Let $\rho$ be a positive number satisfying
\begin{eqnarray}\label{cod:rho}
\gamma_{1}-\rho>K.
\end{eqnarray}
This condition is to ensure the operator we will define is contractive in the proof of the existence of the random slow manifold.
 Then the set composed of orbits in Banach space $C_{-\frac{\rho}{\varepsilon}}^{-}$ is  formulated as follows:
$$\mathcal{M}^{\mu,\varepsilon}(\omega)\triangleq \left\{\zeta\in\mathbb{H} : z^{\mu,\varepsilon}(\cdot,\omega,\zeta)\in C_{-\frac{\rho}{\varepsilon}}^{-}\right\}.$$
We will demonstrate that this family of sets is the Wong-Zakai random slow manifold in the following proposition.
Denote
$\kappa_{*}(K,\gamma_{1},\gamma_{2},\rho,\varepsilon)
=\frac{K}{\gamma_{1}-\rho}+\frac{\varepsilon K}{\rho-\varepsilon\gamma_{2}}+\frac{K^{2}}{(\gamma_{1}-\rho)(\frac{\rho}{\varepsilon}-\gamma_{2})[1-K(\frac{1}{\gamma_{1}-\rho}+\frac{\varepsilon}{\rho-\varepsilon\gamma_{2}})]}.$

\begin{Proposition}[Wong-Zakai random slow manifold]\label{OU-RSM}
If  the  assumptions $(A1)-(A3)$ hold and  the scale parameter $\varepsilon$ satisfies the condition $0<\varepsilon<\frac{\rho}{\gamma_{2}+\frac{1}{\frac{1}{K}-\frac{1}{\gamma_{1}-\rho}}}$, then Wong-Zakai system (\ref{eq:newRDE}) has a Lipschitz random slow manifold $\mathcal{M}^{\mu,\varepsilon}(\omega)$,  represented as a graph
$$\mathcal{M}^{\mu,\varepsilon}(\omega)=\left\{(h^{\mu,\varepsilon}(\omega,\xi),\xi): \xi\in \mathbb{H}_{2}\right\},\quad\mbox{for every}\quad \mu>0,$$
where $h^{\mu,\varepsilon}$ maps from $ \Omega\times \mathbb{H}_{2}$ to $\mathbb{H}_{1}$ with Lipschitz constant satisfying
$$Lip\,h^{\mu,\varepsilon,}(\omega,\cdot)\leq\frac{K}{(\gamma_{1}-\rho)[1-K(\frac{1}{\gamma_{1}-\rho}+\frac{\varepsilon}{\rho-\varepsilon\gamma_{2}})]}\,, \qquad \omega\in\Omega.$$
\end{Proposition}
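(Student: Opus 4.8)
The plan is to characterize $\mathcal{M}^{\mu,\varepsilon}(\omega)$ by the Lyapunov--Perron method, realizing its points as the initial data of exactly those orbits of (\ref{eq:newRDE}) that remain in the weighted space $C_{-\frac{\rho}{\varepsilon}}^{-}$ on the backward half-line $(-\infty,0]$. Starting from the mild formulation in Remark \ref{solvesolution} and the spectral condition $(A1)$, I would first show that an orbit $z^{\mu,\varepsilon}(\cdot)=(x^{\mu,\varepsilon}(\cdot),y^{\mu,\varepsilon}(\cdot))$ with slow-initial value $\xi=y^{\mu,\varepsilon}(0)$ lies in $C_{-\frac{\rho}{\varepsilon}}^{-}$ if and only if, for $t\leq 0$, it solves the integral system
\begin{align*}
x^{\mu,\varepsilon}(t) &= \frac{1}{\varepsilon}\int_{-\infty}^{t} e^{\frac{A}{\varepsilon}(t-r)} f(z^{\mu,\varepsilon}(r))\,\mathrm{d}r + \frac{1}{\sqrt\varepsilon}\int_{-\infty}^{t} e^{\frac{A}{\varepsilon}(t-r)} \sigma z^{\mu}_{r}(\omega)\,\mathrm{d}r, \\
y^{\mu,\varepsilon}(t) &= e^{Bt}\xi + \int_{0}^{t} e^{B(t-r)} g(z^{\mu,\varepsilon}(r))\,\mathrm{d}r.
\end{align*}
The key point is that the contraction rate $\gamma_1/\varepsilon$ of $e^{At/\varepsilon}$ dominates the admissible backward growth $\rho/\varepsilon$ (since $\gamma_1-\rho>K>0$ by (\ref{cod:rho})), so the term $e^{\frac{A}{\varepsilon}(t-s)}x^{\mu,\varepsilon}(s)$ vanishes as $s\to-\infty$ and the lower limit may be pushed to $-\infty$ in the fast coordinate, whereas the slow coordinate retains its datum $\xi$ at $t=0$. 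The right-hand side defines the Lyapunov--Perron operator $\mathcal{J}_{\xi,\omega}$ on $C_{-\frac{\rho}{\varepsilon}}^{-}$.

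Next I would show $\mathcal{J}_{\xi,\omega}$ is a contraction on $C_{-\frac{\rho}{\varepsilon}}^{-}$. Measuring each coordinate against the weight $e^{-\frac{\rho}{\varepsilon}t}$ and invoking the Lipschitz bound $(A2)$, the fast integral against $e^{-\frac{\gamma_1}{\varepsilon}(t-r)}$ yields the factor $\frac{K}{\gamma_1-\rho}$ after evaluating $\int_{-\infty}^{t} e^{\frac{\gamma_1-\rho}{\varepsilon}r}\,\mathrm{d}r$, while the slow integral against $e^{-\gamma_2(t-r)}$ yields $\frac{\varepsilon K}{\rho-\varepsilon\gamma_2}$ after evaluating $\int_{t}^{0} e^{(\gamma_2-\frac{\rho}{\varepsilon})r}\,\mathrm{d}r$; here the stated $\varepsilon$-bound forces $\frac{\rho}{\varepsilon}>\gamma_2$, so the latter integral converges. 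The Lipschitz constant of $\mathcal{J}_{\xi,\omega}$ is therefore $\kappa=\frac{K}{\gamma_1-\rho}+\frac{\varepsilon K}{\rho-\varepsilon\gamma_2}$, and a direct rearrangement shows $\kappa<1$ precisely when $\varepsilon<\frac{\rho}{\gamma_2+(\frac{1}{K}-\frac{1}{\gamma_1-\rho})^{-1}}$, which is exactly the hypothesis. The Banach fixed point theorem then gives, for each $\xi\in\mathbb{H}_2$, a unique orbit $z^{\mu,\varepsilon}(\cdot,\omega,\xi)\in C_{-\frac{\rho}{\varepsilon}}^{-}$, and I define $h^{\mu,\varepsilon}(\omega,\xi):=x^{\mu,\varepsilon}(0,\omega,\xi)$, so that $\mathcal{M}^{\mu,\varepsilon}(\omega)=\{(h^{\mu,\varepsilon}(\omega,\xi),\xi):\xi\in\mathbb{H}_2\}$.

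For the Lipschitz estimate I would compare the fixed points $z_1,z_2$ associated with $\xi_1,\xi_2$. The noise term is independent of $\xi$, so it cancels in the difference, and the inhomogeneous term $e^{Bt}(\xi_1-\xi_2)$ contributes at most $\|\xi_1-\xi_2\|_{\mathbb{H}_2}$ in the weighted norm, its weighted supremum being attained at $t=0$ because $\frac{\rho}{\varepsilon}-\gamma_2>0$. This gives $\|z_1-z_2\|_{C_{-\frac{\rho}{\varepsilon}}^{-}}\leq(1-\kappa)^{-1}\|\xi_1-\xi_2\|_{\mathbb{H}_2}$; evaluating the fast coordinate at $t=0$ then inserts a further factor $\frac{K}{\gamma_1-\rho}$, producing exactly the claimed bound
$$Lip\,h^{\mu,\varepsilon}(\omega,\cdot)\leq\frac{K}{(\gamma_1-\rho)[1-K(\frac{1}{\gamma_1-\rho}+\frac{\varepsilon}{\rho-\varepsilon\gamma_2})]}.$$

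I expect the main obstacle to be the well-posedness of the noise integral $\frac{1}{\sqrt\varepsilon}\int_{-\infty}^{t}e^{\frac{A}{\varepsilon}(t-r)}\sigma z^{\mu}_{r}(\omega)\,\mathrm{d}r$ in $C_{-\frac{\rho}{\varepsilon}}^{-}$, rather than the contraction, which is essentially the computation sketched above. Because the integrand now carries the stationary O-U process $z^{\mu}_{r}(\omega)$ in place of $\mathrm{d}B_r$, I would invoke the tempered (subexponential) growth of $r\mapsto z^{\mu}(\theta_r\omega)$ as $r\to-\infty$, available from the random-dynamical-system properties of the O-U process (Lemma $2.1$ of \cite{J. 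Duan K. Lu and B. Schmalfuss 2}), to dominate the integrand by $e^{-\frac{\gamma_1}{\varepsilon}(t-r)}$ times a subexponentially growing factor; absolute convergence in the weighted space follows since $\gamma_1/\varepsilon$ strictly exceeds $\rho/\varepsilon$. The same growth control, together with the $\omega$-measurability of the Picard iterates of $\mathcal{J}_{\xi,\omega}$, shows that $\omega\mapsto h^{\mu,\varepsilon}(\omega,\xi)$ is a random variable, so $\mathcal{M}^{\mu,\varepsilon}(\omega)$ is a random set; finally the invariance $\varphi(t,\omega,\mathcal{M}^{\mu,\varepsilon}(\omega))\subset\mathcal{M}^{\mu,\varepsilon}(\theta_t\omega)$ follows from the cocycle property combined with the backward-orbit characterization of the manifold established in the first step.
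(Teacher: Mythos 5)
Your proposal is correct and follows essentially the same Lyapunov--Perron argument as the paper: the same weighted space $C_{-\frac{\rho}{\varepsilon}}^{-}$, the same contraction constant $\kappa=\frac{K}{\gamma_{1}-\rho}+\frac{\varepsilon K}{\rho-\varepsilon\gamma_{2}}$ (with the identical rearrangement giving the stated bound on $\varepsilon$), and the same route to the Lipschitz estimate via the factor $\frac{K}{\gamma_{1}-\rho}$ applied to $\|z_{1}-z_{2}\|\leq(1-\kappa)^{-1}\|\xi_{1}-\xi_{2}\|_{\mathbb{H}_{2}}$. The only difference is one of bookkeeping: the paper first conjugates system (\ref{eq:newRDE}) by the stationary solution $\hat{x}^{\mu,\varepsilon}(\theta_{t}\omega)$ and runs the fixed-point argument on the transformed system (\ref{eq:newRDE2}), whereas you keep the noise convolution $\frac{1}{\sqrt{\varepsilon}}\int_{-\infty}^{t}e^{\frac{A}{\varepsilon}(t-r)}\sigma z^{\mu}_{r}(\omega)\,\mathrm{d}r$ --- which is precisely $\hat{x}^{\mu,\varepsilon}(\theta_{t}\omega)$ --- inside the operator as an inhomogeneity, an equivalent choice since that term is independent of the unknown orbit and cancels in all differences.
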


\medskip

The random set $\mathcal{M}^{\mu,\varepsilon}(\omega)$ attracts all orbits exponentially. We can use this attracting property to reduce the dimension of Wong-Zakai system to that of its slow variables.

Now we can describe the main results.
As shown in \cite{HongboFuSlowmf}, when the scale parameter $\varepsilon$ is small enough, the original system (\ref{eq:orgSDE}) has an original random slow manifold
\begin{eqnarray}
\begin{aligned}
\mathcal{M}^{\varepsilon}(\omega)
&\triangleq \left\{\zeta\in\mathbb{H} : w^{\varepsilon}(\cdot,\omega,\zeta)\in C_{-\frac{\rho}{\varepsilon}}^{-}\right\}=\left\{(h^{\varepsilon}(\omega,\xi),\xi): \xi\in \mathbb{H}_{2}\right\},
\end{aligned}
\end{eqnarray}
where $h^{\varepsilon}$ maps from $ \Omega\times \mathbb{H}_{2}$ to $\mathbb{H}_{1}$, with Lipschitz constant satisfying
$$Lip\,h^{\varepsilon}(\omega,\cdot)\leq\frac{K}{(\gamma_{1}-\rho)[1-K(\frac{1}{\gamma_{1}-\rho}+\frac{\varepsilon}{\rho-\varepsilon\gamma_{2}})]}\,, \qquad \omega\in\Omega.$$
Furthermore, for every state $\zeta\in\mathbb{H}$, there exists a corresponding state $\tilde{\zeta}\in\mathcal{M}^{\varepsilon}(\omega)$, such that the original random slow manifold has the exponential tracking property
$$||w^{\varepsilon}(t,\omega,\zeta)-w^{\varepsilon}(t,\omega,\bar{\zeta})||_{\mathbb{H}}\leq C_{1}e^{-C_{2}t}||\zeta-\bar{\zeta}||_{\mathbb{H}},\quad t\geq0,\quad\omega\in\Omega,$$
with $C_{1}=\frac{1}{1-K(\frac{1}{\gamma_{1}-\rho}+\frac{\varepsilon}{\rho-\varepsilon\gamma_{2}})}$ and $C_{2}=\frac{\rho}{\varepsilon}$.
\medskip

Taking these into account, we can use Wong-Zakai random slow manifold to approximate the  original random slow manifold.

\begin{Theorem}[Wong-Zakai approximation of the original random slow manifold]\label{thm:appslmf}
If  the  assumptions $(A1)-(A3)$ hold and  the scale parameter $\varepsilon$ satisfies the condition  $0<\varepsilon<\frac{\rho}{\gamma_{2}+\frac{1}{\frac{1}{K}-\frac{1}{\gamma_{1}-\rho}}}$, then the original random slow manifold can be approximated by the Wong-Zakai random slow manifold in the following sense
$$||h^{\mu,\varepsilon}(\omega,\xi)-h^{\varepsilon}(\omega,\xi)||_{\mathbb{H}_{1}}=o(\mu^{\alpha})\quad\mbox{a.s. as }\, \mu\to 0,$$
for each $\xi\in \mathbb{H}_{2}$, $\alpha\in(0,\frac{1}{2})$.
Furthermore, this relation holds uniformly on interval $[\varepsilon_{1},\varepsilon_{2}]$ for any $\varepsilon_{1}$, $\varepsilon_{2}$ satisfying $0<\varepsilon_{1}<\varepsilon_{2}<\frac{\rho}{\gamma_{2}+\frac{1}{\frac{1}{K}-\frac{1}{\gamma_{1}-\rho}}}$.
\end{Theorem}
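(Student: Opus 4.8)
The plan is to reduce the comparison of the two manifolds to the comparison of the two Lyapunov--Perron fixed points that generate them, and then to isolate the only place where the two systems differ, namely the driving term. Recall from the proof of Proposition~\ref{OU-RSM} (and Remark~\ref{solvesolution}) that the orbit $z^{\mu,\varepsilon}(\cdot,\omega,\zeta)\in C^-_{-\frac{\rho}{\varepsilon}}$ with slow datum $\xi$ is the unique fixed point in $C^-_{-\frac{\rho}{\varepsilon}}$ of a Lyapunov--Perron operator $\mathcal{J}^{\mu,\varepsilon}_{\xi}$ whose fast component reads
\[
x^{\mu,\varepsilon}(t)=\frac{1}{\varepsilon}\int_{-\infty}^{t}e^{\frac{A}{\varepsilon}(t-s)}f\big(x^{\mu,\varepsilon}(s),y^{\mu,\varepsilon}(s)\big)\,\mathrm{d}s+\frac{\sigma}{\sqrt\varepsilon}\int_{-\infty}^{t}e^{\frac{A}{\varepsilon}(t-s)}z^{\mu}_{s}(\omega)\,\mathrm{d}s,
\]
with $h^{\mu,\varepsilon}(\omega,\xi)=x^{\mu,\varepsilon}(0)$; the original orbit $w^{\varepsilon}(\cdot,\omega,\zeta)$ solves the same equation with the last term replaced by the stochastic convolution $\frac{\sigma}{\sqrt\varepsilon}\int_{-\infty}^{t}e^{\frac{A}{\varepsilon}(t-s)}\,\mathrm{d}B_{s}(\omega)$, and $h^{\varepsilon}(\omega,\xi)=u^{\varepsilon}(0)$. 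Since $h^{\mu,\varepsilon}$ and $h^{\varepsilon}$ are the fast components at $t=0$ and the weight $e^{\frac{\rho}{\varepsilon}t}$ equals $1$ there, one has $\|h^{\mu,\varepsilon}(\omega,\xi)-h^{\varepsilon}(\omega,\xi)\|_{\mathbb{H}_{1}}\le\|z^{\mu,\varepsilon}-w^{\varepsilon}\|_{C^-_{-\frac{\rho}{\varepsilon}}}$, so it suffices to estimate the latter.

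Next I would apply the standard perturbation-of-fixed-points estimate. Both $\mathcal{J}^{\mu,\varepsilon}_{\xi}$ and $\mathcal{J}^{\varepsilon}_{\xi}$ are contractions on $C^-_{-\frac{\rho}{\varepsilon}}$ with the \emph{same} contraction constant, controlled by $\kappa_{*}(K,\gamma_{1},\gamma_{2},\rho,\varepsilon)<1$ under $(A1)$--$(A3)$ and the stated bound on $\varepsilon$, and they differ only through the driving term, which does not depend on the unknown orbit. Writing $\bar{\kappa}<1$ for the common contraction constant and applying both operators to the original fixed point $w^{\varepsilon}$ gives
\[
\|z^{\mu,\varepsilon}-w^{\varepsilon}\|_{C^-_{-\frac{\rho}{\varepsilon}}}\le\frac{1}{1-\bar{\kappa}}\,\big\|\mathcal{J}^{\mu,\varepsilon}_{\xi}(w^{\varepsilon})-\mathcal{J}^{\varepsilon}_{\xi}(w^{\varepsilon})\big\|_{C^-_{-\frac{\rho}{\varepsilon}}}=\frac{1}{1-\bar{\kappa}}\,\Delta^{\mu,\varepsilon}(\omega),
\]
where $\Delta^{\mu,\varepsilon}(\omega)$ is exactly the weighted sup-norm of the difference of the two driving terms,
\[
\Delta^{\mu,\varepsilon}(\omega)=\sup_{t\le0}e^{\frac{\rho}{\varepsilon}t}\Big\|\frac{\sigma}{\sqrt\varepsilon}\int_{-\infty}^{t}e^{\frac{A}{\varepsilon}(t-s)}z^{\mu}_{s}(\omega)\,\mathrm{d}s-\frac{\sigma}{\sqrt\varepsilon}\int_{-\infty}^{t}e^{\frac{A}{\varepsilon}(t-s)}\,\mathrm{d}B_{s}(\omega)\Big\|_{\mathbb{H}_{1}}.
\]

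The heart of the argument, and the step I expect to be the main obstacle, is the Wong--Zakai estimate $\Delta^{\mu,\varepsilon}(\omega)=o(\mu^{\alpha})$ a.s. for every $\alpha\in(0,\tfrac12)$. I would integrate both convolutions by parts in $s$, using that $z^{\mu}_{s}\,\mathrm{d}s=\mathrm{d}\Phi^{\mu}_{s}$ and that $A\sigma\in\mathbb{H}_{1}$ (this is precisely where the hypothesis $\|A\sigma\|_{\mathbb{H}_{1}}<\infty$ enters, since $\tfrac{A}{\varepsilon}e^{\frac{A}{\varepsilon}(t-s)}\sigma=\tfrac{1}{\varepsilon}e^{\frac{A}{\varepsilon}(t-s)}A\sigma$ must be integrable). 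The two convolutions then rewrite pathwise as boundary terms $\sigma\Phi^{\mu}_{t}$, $\sigma B_{t}$ plus absolutely convergent integrals of $\Phi^{\mu}_{s}$, $B_{s}$ against the kernel $\tfrac{A}{\varepsilon}e^{\frac{A}{\varepsilon}(t-s)}\sigma$, so the difference becomes $\sigma(\Phi^{\mu}_{t}-B_{t})$ plus a weighted integral of $\Phi^{\mu}_{s}(\omega)-B_{s}(\omega)$. One controls this by splitting $(-\infty,t]$ into a near range and a far range: on compact intervals one invokes the almost sure rate $\sup_{s\in[T_{1},T_{2}]}|\Phi^{\mu}_{s}(\omega)-B_{s}(\omega)|=o(\mu^{\alpha})$ (the Wong--Zakai convergence of the integrated O-U process, as used for random invariant manifolds in \cite{TaoJiang}), while the exponential factor $e^{-\gamma_{1}(t-s)/\varepsilon}$ together with the sublinear growth of $B_{s}$ and $\Phi^{\mu}_{s}$ as $s\to-\infty$ makes the far-range tail (and the boundary term at $-\infty$) negligible uniformly in $\mu$. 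Balancing the two ranges yields the claimed rate; the delicate points are keeping the $s\to-\infty$ tail summable against the exponential weight and ensuring the modulus-of-continuity control of $\Phi^{\mu}-B$ is uniform over the relevant window.

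Finally, the uniformity on $[\varepsilon_{1},\varepsilon_{2}]$ follows by tracking the $\varepsilon$-dependence of the constants. On any compact subinterval with $\varepsilon_{2}$ bounded away from $\frac{\rho}{\gamma_{2}+(\frac1K-\frac{1}{\gamma_{1}-\rho})^{-1}}$, the common contraction constant $\bar{\kappa}$ stays bounded below $1$, so $\tfrac{1}{1-\bar{\kappa}}$ is uniformly bounded, while the weight exponent $\frac{\rho}{\varepsilon}$ and the decay rate $\frac{\gamma_{1}}{\varepsilon}$ vary continuously and remain bounded above and below. Hence the estimate on $\Delta^{\mu,\varepsilon}(\omega)$ holds with constants uniform in $\varepsilon\in[\varepsilon_{1},\varepsilon_{2}]$, giving $\|h^{\mu,\varepsilon}(\omega,\xi)-h^{\varepsilon}(\omega,\xi)\|_{\mathbb{H}_{1}}=o(\mu^{\alpha})$ a.s., uniformly there.
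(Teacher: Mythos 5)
Your proposal is correct and follows essentially the same route as the paper: reduce the manifold difference to a fixed-point perturbation estimate for the Lyapunov--Perron operators (whose contraction constant is uniform in $\mu$), and then establish the a.s.\ rate $o(\mu^{\alpha})$ for the difference of the two linear driving convolutions by integrating by parts against $A\sigma$, splitting $(-\infty,0]$ into a compact window where $|\Phi^{\mu}_{t}-B_{t}|=\mu|z^{\mu}(\omega)-z^{\mu}(\theta_{t}\omega)|=o(\mu^{\alpha})$ and a far tail killed by the exponential kernel and the sublinear growth of $B$ and $\Phi^{\mu}$. The only cosmetic difference is that the paper first passes to the transformed random systems via the stationary solutions $\hat{u}^{\varepsilon}$, $\hat{x}^{\mu,\varepsilon}$, so your quantity $\Delta^{\mu,\varepsilon}(\omega)$ appears there as $\|\hat{x}^{\mu,\varepsilon}(\theta_{t}\omega)-\hat{u}^{\varepsilon}(\theta_{t}\omega)\|_{C^{1,-}_{-\rho/\varepsilon}}$, and the relevant contraction constant is $\kappa_{1}=\frac{K}{\gamma_{1}-\rho}+\frac{\varepsilon K}{\rho-\varepsilon\gamma_{2}}$ rather than $\kappa_{*}$.
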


\begin{Theorem}[Intersystem exponential tracking property]\label{thm:intetrack}
If  the  assumptions $(A1)-(A3)$ hold and  the scale parameter $\varepsilon$ satisfies the condition $\kappa_{*}(K,\gamma_{1},\gamma_{2},\rho,\varepsilon)<1$,  then when $\mu$ tends to zero, for each initial state $\zeta\in\mathbb{H}$ of the original system (\ref{eq:orgSDE}), there is a corresponding initial state  $\bar{\zeta}$ on the Wong-Zakai random slow manifold $\mathcal{M}^{\mu,\varepsilon}(\omega)$, such that
$$||w^{\varepsilon}(t,\omega,\zeta)-z^{\mu,\varepsilon}(t,\omega,\bar{\zeta})||_{\mathbb{H}}\leq C_{1}e^{-C_{2}t}||\zeta-\bar{\zeta}||_{\mathbb{H}}+o(\mu^{\alpha}),\quad\mbox{a.s. for every}\quad t\geq0,$$
with $C_{1}=\frac{1}{1-K(\frac{1}{\gamma_{1}-\rho}+\frac{\varepsilon}{\rho-\varepsilon\gamma_{2}})}$, $C_{2}=\frac{\rho}{\varepsilon}$, and $\alpha\in(0,\frac{1}{2})$.
\end{Theorem}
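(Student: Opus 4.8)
The plan is to reduce the two-system comparison to three pieces that are each already under control: the exponential tracking of the \emph{original} slow manifold (stated just above Theorem \ref{thm:appslmf}), the graph-distance estimate between the two slow manifolds (Theorem \ref{thm:appslmf}), and a pathwise Wong--Zakai convergence of the solutions driven by $\sigma\,\mathrm{d}B_t$ and by $\sigma z^{\mu}_t\,\mathrm{d}t$. First I would fix $\omega$, $\zeta$ and $\mu$, and invoke the original-system exponential tracking property to produce a point $\tilde{\zeta}=(h^{\varepsilon}(\omega,\xi_{*}),\xi_{*})\in\mathcal{M}^{\varepsilon}(\omega)$ whose original orbit shadows that of $\zeta$, i.e. $\|w^{\varepsilon}(t,\omega,\zeta)-w^{\varepsilon}(t,\omega,\tilde{\zeta})\|_{\mathbb{H}}\le C_{1}e^{-C_{2}t}\|\zeta-\tilde{\zeta}\|_{\mathbb{H}}$. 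I would then define the candidate tracking point on the Wong--Zakai manifold to have the \emph{same} slow component, $\bar{\zeta}:=(h^{\mu,\varepsilon}(\omega,\xi_{*}),\xi_{*})\in\mathcal{M}^{\mu,\varepsilon}(\omega)$, so that Theorem \ref{thm:appslmf} immediately gives $\|\tilde{\zeta}-\bar{\zeta}\|_{\mathbb{H}}=\|h^{\varepsilon}(\omega,\xi_{*})-h^{\mu,\varepsilon}(\omega,\xi_{*})\|_{\mathbb{H}_{1}}=o(\mu^{\alpha})$.

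Next I would split the quantity of interest by the triangle inequality through the two intermediate orbits $w^{\varepsilon}(t,\omega,\tilde{\zeta})$ and $z^{\mu,\varepsilon}(t,\omega,\tilde{\zeta})$:
\[
\|w^{\varepsilon}(t,\omega,\zeta)-z^{\mu,\varepsilon}(t,\omega,\bar{\zeta})\|_{\mathbb{H}}
\le \|w^{\varepsilon}(t,\omega,\zeta)-w^{\varepsilon}(t,\omega,\tilde{\zeta})\|_{\mathbb{H}}
+\|w^{\varepsilon}(t,\omega,\tilde{\zeta})-z^{\mu,\varepsilon}(t,\omega,\tilde{\zeta})\|_{\mathbb{H}}
+\|z^{\mu,\varepsilon}(t,\omega,\tilde{\zeta})-z^{\mu,\varepsilon}(t,\omega,\bar{\zeta})\|_{\mathbb{H}}.
\]
The first term is handled by the original tracking estimate, and since $\|\zeta-\tilde{\zeta}\|_{\mathbb{H}}\le\|\zeta-\bar{\zeta}\|_{\mathbb{H}}+\|\bar{\zeta}-\tilde{\zeta}\|_{\mathbb{H}}$ and $e^{-C_{2}t}\le 1$ for $t\ge 0$, it is bounded by $C_{1}e^{-C_{2}t}\|\zeta-\bar{\zeta}\|_{\mathbb{H}}+o(\mu^{\alpha})$, producing exactly the leading term of the claim. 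The third term compares two solutions of the \emph{same} Wong--Zakai system whose initial data differ by $o(\mu^{\alpha})$; a Gronwall estimate built from the mild formulation in Remark \ref{solvesolution} and the Lipschitz bounds $(A2)$ controls it by $C(t)\|\tilde{\zeta}-\bar{\zeta}\|_{\mathbb{H}}=o(\mu^{\alpha})$ for each fixed $t$. The second term is the genuine Wong--Zakai solution convergence with the \emph{same} initial datum, which I would establish (or import from the proof of Theorem \ref{thm:appslmf}) at rate $o(\mu^{\alpha})$.

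The heart of the matter---and the step I expect to be the main obstacle---is this second term. Subtracting the two mild solutions, the nonlinear parts are absorbed by $(A2)$ through Gronwall, so the estimate is driven by the difference of the forcing convolutions $\varepsilon^{-1/2}\int_{0}^{t}e^{\frac{A}{\varepsilon}(t-s)}\sigma\,\mathrm{d}B_{s}(\omega)-\varepsilon^{-1/2}\int_{0}^{t}e^{\frac{A}{\varepsilon}(t-s)}\sigma z^{\mu}_{s}(\omega)\,\mathrm{d}s$. Using the O-U identity $z^{\mu}_{s}\,\mathrm{d}s=\mathrm{d}B_{s}-\mu\,\mathrm{d}z^{\mu}_{s}$ from (\ref{O-U}) and integrating by parts transfers the discrepancy onto boundary terms and an integral carrying the factor $\mu$, whose pathwise smallness follows from the almost-sure sublinear growth of the stationary O-U process (the properties of $z^{\mu}$ recorded via Lemma $2.1$ of \cite{J. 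Duan K. Lu and B. Schmalfuss 2}); this is precisely what yields the rate $o(\mu^{\alpha})$ for every $\alpha\in(0,\tfrac{1}{2})$. Throughout, the hypothesis $\kappa_{*}(K,\gamma_{1},\gamma_{2},\rho,\varepsilon)<1$ is what guarantees that all the contraction and Gronwall constants appearing in the three terms are finite and that the graph representations of both manifolds (Proposition \ref{OU-RSM}) remain valid, so that the decomposition closes. Finally I would note that the $o(\mu^{\alpha})$ contributions are pointwise in $t$, matching the statement ``for every $t\ge 0$'', so no uniformity in $t$ of the error term is required.
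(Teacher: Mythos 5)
Your proposal is correct in outline but takes a genuinely different route from the paper. Both arguments begin identically: invoke the exponential tracking property of the original manifold to obtain $\tilde{\zeta}=(h^{\varepsilon}(\omega,\xi_{*}),\xi_{*})\in\mathcal{M}^{\varepsilon}(\omega)$, then take $\bar{\zeta}=(h^{\mu,\varepsilon}(\omega,\xi_{*}),\xi_{*})$ with the same slow component so that $\|\tilde{\zeta}-\bar{\zeta}\|_{\mathbb{H}}=o(\mu^{\alpha})$ by Theorem \ref{thm:appslmf}. The divergence is in how the remaining gap $\|w^{\varepsilon}(t,\omega,\tilde{\zeta})-z^{\mu,\varepsilon}(t,\omega,\bar{\zeta})\|_{\mathbb{H}}$ is closed. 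The paper never compares solutions of the two differently-driven systems directly: it exploits the invariance of both slow manifolds, so that along these two orbits the fast components are slaved to the slow ones via $h^{\varepsilon}(\theta_{t}\omega,\cdot)$ and $h^{\mu,\varepsilon}(\theta_{t}\omega,\cdot)$, and it suffices to run a single Gronwall argument on the difference of the two \emph{reduced slow} equations, with the graph discrepancy $\|h^{\varepsilon}-h^{\mu,\varepsilon}\|=o(\mu^{\alpha})$ entering as the only forcing; the noise never appears explicitly. You instead insert the intermediate orbit $z^{\mu,\varepsilon}(t,\omega,\tilde{\zeta})$ and therefore need two extra ingredients: continuous dependence on initial data for the Wong--Zakai system (routine Gronwall, fine pointwise in $t$) and, crucially, a \emph{forward-in-time} pathwise Wong--Zakai convergence of solutions with identical initial data. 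That second ingredient is provable by exactly the mechanism you sketch ($\Phi^{\mu}_{t}-B_{t}=\mu z^{\mu}(\omega)-\mu z^{\mu}(\theta_{t}\omega)=o(\mu^{\alpha})$ on compacts, integration by parts using $\|A\sigma\|_{\mathbb{H}_{1}}<\infty$, then Gronwall), but your remark that it can be ``imported from the proof of Theorem \ref{thm:appslmf}'' is not accurate: that proof lives entirely on the negative half-line in the weighted spaces $C^{-}_{-\rho/\varepsilon}$ and produces a fixed-point/backward estimate, not a forward Cauchy-problem estimate, so you would have to write this lemma out separately. What your route buys is generality (the forward Wong--Zakai solution convergence is of independent interest and does not use the manifold structure at all); what the paper's route buys is economy (no new convergence lemma, and the comparison is confined to the lower-dimensional slow dynamics). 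Both yield only a pointwise-in-$t$ error $o(\mu^{\alpha})$, since in each case the Gronwall constant grows in $t$, so neither is weaker than the other in that respect.
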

With this intersystem exponential tracking property, we can reduce the original system to a lower dimensional random system pathwisely in Corollary \ref{thm:intereduce}.
\begin{Corollary}[Reduce the original system by Wong-Zakai random slow manifold]\label{thm:intereduce}
If  the  assumptions $(A1)-(A3)$ hold and  the scale parameter $\varepsilon$ satisfies the condition $\kappa_{*}(K,\gamma_{1},\gamma_{2},\rho,\varepsilon)<1$,  then when $\mu$ tends to zero,  a solution of the original system (\ref{eq:orgSDE}) can be approximated by a corresponding solution of the following system
\begin{eqnarray}\label{intesysredu}
\left\{\begin{array}{l}
\dot{\tilde{v}}^{\varepsilon}=B\tilde{v}^{\varepsilon}+g(\tilde{u}^{\varepsilon},\tilde{v}^{\varepsilon}),\\
\tilde{u}^{\varepsilon}=h^{\mu,\varepsilon}(\theta_{t}\omega,\tilde{v}^{\varepsilon}).\\
\end{array}
\right.
\end{eqnarray}
That is for each initial state $\zeta\in\mathbb{H}$ of the original system (\ref{eq:orgSDE}), there is a corresponding initial state  $\bar{\zeta}$ on the Wong-Zakai random slow manifold $\mathcal{M}^{\mu,\varepsilon}(\omega)$, such that
$$||w^{\varepsilon}(t,\omega,\zeta)-\tilde{w}^{\varepsilon}(t,\omega,\bar{\zeta})||_{\mathbb{H}}\leq C_{1}e^{-C_{2}t}||\zeta-\bar{\zeta}||_{\mathbb{H}}+o(\mu^{\alpha}),\quad\mbox{a.s. for every}\quad t\geq0,$$
with $C_{1}=\frac{1}{1-K(\frac{1}{\gamma_{1}-\rho}+\frac{\varepsilon}{\rho-\varepsilon\gamma_{2}})}$, $C_{2}=\frac{\rho}{\varepsilon}$, and $\alpha\in(0,\frac{1}{2})$.
\end{Corollary}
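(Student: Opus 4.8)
The plan is to show that the reduced system (\ref{intesysredu}) is precisely the Wong-Zakai system (\ref{eq:newRDE}) restricted to its own random slow manifold, so that the desired estimate collapses immediately onto the intersystem tracking bound of Theorem \ref{thm:intetrack}. In other words, the corollary is essentially a reinterpretation of Theorem \ref{thm:intetrack} rather than a fresh estimate, and the real work lies in identifying the solution of (\ref{intesysredu}) with the manifold-confined Wong-Zakai orbit.

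First I would fix $\bar{\zeta}\in\mathcal{M}^{\mu,\varepsilon}(\omega)$ and invoke the invariance established in Proposition \ref{OU-RSM}: since $\varphi(t,\omega,\mathcal{M}^{\mu,\varepsilon}(\omega))\subset\mathcal{M}^{\mu,\varepsilon}(\theta_{t}\omega)$, the orbit $z^{\mu,\varepsilon}(t,\omega,\bar{\zeta})=(x^{\mu,\varepsilon}(t),y^{\mu,\varepsilon}(t))$ stays on the manifold for all $t\geq0$. Because $\mathcal{M}^{\mu,\varepsilon}$ is the graph of $h^{\mu,\varepsilon}$, this is equivalent to the pointwise slaving relation
$$x^{\mu,\varepsilon}(t,\omega,\bar{\zeta})=h^{\mu,\varepsilon}\bigl(\theta_{t}\omega,\,y^{\mu,\varepsilon}(t,\omega,\bar{\zeta})\bigr),\qquad t\geq0.$$
Substituting this identity into the slow equation of (\ref{eq:newRDE}) (equivalently, into its mild form recorded in Remark \ref{solvesolution}) yields the closed equation
$$\dot{y}^{\mu,\varepsilon}=By^{\mu,\varepsilon}+g\bigl(h^{\mu,\varepsilon}(\theta_{t}\omega,y^{\mu,\varepsilon}),\,y^{\mu,\varepsilon}\bigr),$$
which is exactly the slow equation of the reduced system (\ref{intesysredu}).

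Next I would check well-posedness of (\ref{intesysredu}) and pin down the identification. The Lipschitz bound on $h^{\mu,\varepsilon}(\omega,\cdot)$ from Proposition \ref{OU-RSM}, combined with assumption $(A2)$, makes the composed drift $y\mapsto g(h^{\mu,\varepsilon}(\theta_{t}\omega,y),y)$ globally Lipschitz, so (\ref{intesysredu}) admits a unique solution $\tilde{w}^{\varepsilon}(t,\omega,\bar{\zeta})=(\tilde{u}^{\varepsilon}(t),\tilde{v}^{\varepsilon}(t))$ for each slow initial datum. Choosing that datum to match the slow component of $\bar{\zeta}$ and using uniqueness forces $\tilde{v}^{\varepsilon}(t)=y^{\mu,\varepsilon}(t,\omega,\bar{\zeta})$, whence the algebraic relation $\tilde{u}^{\varepsilon}=h^{\mu,\varepsilon}(\theta_{t}\omega,\tilde{v}^{\varepsilon})$ gives $\tilde{u}^{\varepsilon}(t)=x^{\mu,\varepsilon}(t,\omega,\bar{\zeta})$. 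Therefore $\tilde{w}^{\varepsilon}(t,\omega,\bar{\zeta})=z^{\mu,\varepsilon}(t,\omega,\bar{\zeta})$ for every $t\geq0$.

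With this identification secured, the claimed estimate is a verbatim restatement of Theorem \ref{thm:intetrack}: for each $\zeta\in\mathbb{H}$ take the corresponding $\bar{\zeta}\in\mathcal{M}^{\mu,\varepsilon}(\omega)$ furnished by that theorem and replace $z^{\mu,\varepsilon}(t,\omega,\bar{\zeta})$ by $\tilde{w}^{\varepsilon}(t,\omega,\bar{\zeta})$ in the tracking inequality, which reproduces the asserted bound with the same $C_{1}$, $C_{2}$ and $o(\mu^{\alpha})$ term. The main obstacle is not the inequality but the bookkeeping of the reduction step: one must verify that invariance together with the graph representation genuinely closes the slow equation with the \emph{shifted} sample $\theta_{t}\omega$ appearing inside $h^{\mu,\varepsilon}$, and that the cocycle and measurability structure is preserved so that (\ref{intesysredu}) is a bona fide random system. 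Once the identity $\tilde{w}^{\varepsilon}=z^{\mu,\varepsilon}$ on $\mathcal{M}^{\mu,\varepsilon}$ is established, no further estimation is needed.
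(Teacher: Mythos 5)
Your proposal is correct and follows essentially the same route as the paper, which derives the corollary directly from Theorem \ref{thm:intetrack} by observing that the manifold-confined Wong-Zakai orbit $z^{\mu,\varepsilon}(t,\omega,\bar{\zeta})$ satisfies the slaving relation $x^{\mu,\varepsilon}=h^{\mu,\varepsilon}(\theta_{t}\omega,y^{\mu,\varepsilon})$ by invariance and therefore coincides with the solution of the reduced system (\ref{intesysredu}). Your added well-posedness and uniqueness bookkeeping only makes explicit what the paper leaves implicit.
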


Based on Corollary \ref{thm:intereduce},  we will use the reduced Wong-Zakai system to estimate the unknown parameter of the original system. This offers a benefit for computational cost reduction, as we work on a lower  dimensional  system pathwisely. We will illustrate this by an example.

\section{Converting of Wong-Zakai system}\label{convert}
The random slow manifold of the original  system (\ref{eq:orgSDE}) exists by the techniques of random dynamical systems \cite{HongboFuSlowmf}. The stochastic system is transformed to a random system with equivalent dynamics. Then the investigation of the random slow manifold of the stochastic system is converted to that of the random slow manifold of the random system. In fact, Wong-Zakai system (\ref{eq:newRDE}) is already a random dynamical system. Its random slow manifold can be studied directly using the random dynamical systems theory without transformation. This is one of the benefits to do our Wong-Zakai approximation of the random slow manifold.
But in order to compare the original random slow manifold and Wong-Zakai random slow manifold more conveniently, we transform Wong-Zakai system (\ref{eq:newRDE}) to an equivalent random system with the counterpart transformation used in \cite{HongboFuSlowmf}.

Recall the transform procedure made on the original system (\ref{eq:orgSDE}) in \cite{HongboFuSlowmf} or  \cite{J. Duan K. Lu and B. Schmalfuss 2}.
Introduce a new equation
\begin{eqnarray}\label{eq:orglSDE}
\frac{\mathrm{d}\hat{u}^{\varepsilon}}{\mathrm{d}t}=\frac{1}{\varepsilon}A\hat{u}^{\varepsilon}+\frac{\sigma}{\sqrt{\varepsilon}}\dot{B}_{t}(\omega),\qquad \hat{u}^{\varepsilon}\in \mathbb{H}_{1}.
\end{eqnarray}
It has a stationary solution
$$\hat{u}^{\varepsilon}(\theta_{t}\omega)=\frac{1}{\sqrt{\varepsilon}}\int_{-\infty}^{t}e^{\frac{1}{\varepsilon}A(t-s)}\sigma\mathrm{d}B_{s}(\omega)$$
through the random variable
$$\hat{u}^{\varepsilon}(\omega)=\frac{1}{\sqrt{\varepsilon}}\int_{-\infty}^{0}e^{-\frac{1}{\varepsilon}As}\sigma\mathrm{d}B_{s}(\omega).$$
By coordinate transformation
\begin{eqnarray}\label{orgradtrs}
T^{\varepsilon}(\omega,(u^{\varepsilon},v^{\varepsilon}))=(u^{\varepsilon}-\hat{u}^{\varepsilon}(\omega),v^{\varepsilon}),\\
\end{eqnarray}
the original system $(\ref{eq:orgSDE})$ can be converted to its corresponding random  form
\begin{equation}\label{eq:orgRDS}
\left\{\begin{array}{l}
\mathrm{d}U^{\varepsilon}(t)=\frac{1}{\varepsilon}AU^{\varepsilon}(t)\mathrm{d}t+\frac{1}{\varepsilon}f(U^{\varepsilon}(t)+\hat{u}^{\varepsilon}(\theta_{t}\omega),V^{\varepsilon}(t))\mathrm{d}t,~\,\,\,U_{0}^{\varepsilon}=\eta-\hat{u}^{\varepsilon}(\omega),\\
\mathrm{d}V^{\varepsilon}(t)=BV^{\varepsilon}(t)\mathrm{d}t+g(U^{\varepsilon}(t)+\hat{u}^{\varepsilon}(\theta_{t}\omega),V^{\varepsilon}(t))\mathrm{d}t,\qquad V_{0}^{\varepsilon}=\xi.
\end{array}
\right.
\end{equation}
Then its solution in a mild sense is
 \begin{equation*}
\left\{\begin{array}{l}
U^{\varepsilon}(t)=e^{\frac{A}{\varepsilon}t}U_{0}^{\varepsilon}+\frac{1}{\varepsilon}\int_{0}^{t}e^{\frac{A}{\varepsilon}(t-s)}f(U^{\varepsilon}(s)+\hat{u}^{\varepsilon}(\theta_{s}\omega),V^{\varepsilon}(s))\mathrm{d}s,\\
V^{\varepsilon}(t)=e^{Bt}V_{0}^{\varepsilon}+\int_{0}^{t}e^{B(t-s)}g(U^{\varepsilon}(s)+\hat{u}^{\varepsilon}(\theta_{s}\omega)),V^{\varepsilon}(s)\mathrm{d}s.
\end{array}
\right.
\end{equation*}
Denote the random slow manifold of system (\ref{eq:orgRDS}) as
$M^{\varepsilon}(\omega)=\{(H^{\varepsilon}(\omega,\xi),\xi): \xi\in \mathbb{H}_{2}\},$
and through the transformation $T^{\varepsilon}$ derive the original random slow manifold of system (\ref{eq:orgSDE})
\begin{eqnarray}\label{manif:orgSDE}
\begin{aligned}
\mathcal{M}^{\varepsilon}(\omega)
=&(T^{\varepsilon})^{-1}M^{\varepsilon}(\omega)\\
=&\{(H^{\varepsilon}(\omega,\xi)+\hat{u}^{\varepsilon}(\omega),\xi):\xi\in \mathbb{H}_{2}\}\\
\triangleq&\left\{(h^{\varepsilon}(\omega,\xi),\xi):\xi\in \mathbb{H}_{2}\right\}.
\end{aligned}
\end{eqnarray}

Next, we consider the Wong-Zakai system (\ref{eq:newRDE}) with a similar procedure.
\begin{Claim}\label{Phiproperty}
Over Wiener shift $\theta_{t}$, the integrated O-U process $\Phi^{\mu}_{t}(\omega)$ satisfies the equation
\begin{eqnarray}\label{Phitheta}
\Phi^{\mu}_{s}(\theta_{t}\omega)=\Phi^{\mu}_{s+t}(\omega)-\Phi^{\mu}_{t}(\omega).
\end{eqnarray}
\end{Claim}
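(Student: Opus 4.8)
The plan is to derive (\ref{Phitheta}) directly from the definition of $\Phi^{\mu}$ in (\ref{zakai}) as the time integral of the stationary O-U process, using nothing more than the flow property $\theta_{r}\theta_{t}=\theta_{r+t}$ listed among the basic properties of the Wiener shift. The key observation is that the stationary random variable $z^{\mu}(\cdot)$ evaluated along a doubly shifted sample collapses the two shifts into a single one, so that the integrand on the left side of (\ref{Phitheta}) becomes a mere time translate of the integrand defining $\Phi^{\mu}_{s+t}(\omega)$.

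Concretely, I would first replace $\omega$ by $\theta_{t}\omega$ and the upper limit by $s$ in (\ref{zakai}) to write
\[
\Phi^{\mu}_{s}(\theta_{t}\omega)=\int_{0}^{s}z^{\mu}(\theta_{r}(\theta_{t}\omega))\,\mathrm{d}r.
\]
Next I would invoke the cocycle property $\theta_{r}(\theta_{t}\omega)=\theta_{r+t}\omega$ to turn the integrand into $z^{\mu}(\theta_{r+t}\omega)$, and then perform the change of variable $\tau=r+t$, which carries the limits of integration from $[0,s]$ to $[t,s+t]$:
\[
\Phi^{\mu}_{s}(\theta_{t}\omega)=\int_{0}^{s}z^{\mu}(\theta_{r+t}\omega)\,\mathrm{d}r=\int_{t}^{s+t}z^{\mu}(\theta_{\tau}\omega)\,\mathrm{d}\tau.
\]
Finally I would split this integral as the one over $[0,s+t]$ minus the one over $[0,t]$ and recognize the two pieces as $\Phi^{\mu}_{s+t}(\omega)$ and $\Phi^{\mu}_{t}(\omega)$ respectively, which is exactly (\ref{Phitheta}).

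There is no substantial obstacle here; the only point deserving care is the justification of $z^{\mu}(\theta_{r}\theta_{t}\omega)=z^{\mu}(\theta_{r+t}\omega)$, which is immediate from property (2) of the Wiener shift together with the fact that $z^{\mu}(\theta_{s}\omega)=z^{\mu}_{s}(\omega)$ is a stationary O-U process over the metric dynamical system $(\Omega,\mathcal{F},\mathbb{P},(\theta_{t})_{t\in\mathbb{R}})$. As an alternative verification one may start from the explicit double-integral representation in (\ref{zakai}) and apply the shift relation $\theta_{t}\omega(s)=\omega(t+s)-\omega(t)$ to the driving Brownian increments, or equivalently differentiate both sides of (\ref{Phitheta}) in $s$, using $\frac{\mathrm{d}}{\mathrm{d}s}\Phi^{\mu}_{s}(\theta_{t}\omega)=z^{\mu}(\theta_{s+t}\omega)$, and check that both sides agree at $s=0$; either route yields the same identity.
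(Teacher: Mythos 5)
Your proposal is correct and follows essentially the same route as the paper: both express $\Phi^{\mu}_{s}(\theta_{t}\omega)$ as $\int_{0}^{s}z^{\mu}(\theta_{r}(\theta_{t}\omega))\,\mathrm{d}r$, collapse the double shift via the group property and the stationarity relation $z^{\mu}_{r}(\omega)=z^{\mu}_{0}(\theta_{r}\omega)$, change variables to shift the integration interval to $[t,s+t]$, and split the integral. No gaps to report.
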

\medskip
\begin{proof}
Recall that
$\Phi^{\mu}_{t}(\omega)$ is defined in (\ref{zakai})
through O-U process $z^{\mu}_{t}(\omega)$  which satisfies the equation (\ref{O-U}). The solution of system (\ref{O-U}) is
$$z^{\mu}_{t}(\omega)=\frac{1}{\mu}\int_{-\infty}^{t}e^{-\frac{1}{\mu}(t-s)}\mathrm{d}B_{s}(\omega).$$
In fact, it is a stationary solution of  system (\ref{O-U}) induced by random variable
$$z_{0}^{\mu}(\omega)=\frac{1}{\mu}\int_{-\infty}^{0}e^{\frac{s}{\mu}}\mathrm{d}B_{s},$$
that is
$$z^{\mu}_{t}(\omega)=z^{\mu}_{0}(\theta_{t}\omega).$$
Using this relation   twice, we have
$$z^{\mu}_{s}(\theta_{t}\omega)=z_{0}^{\mu}(\theta_{s}(\theta_{t}\omega))=z_{0}^{\mu}(\theta_{s+t}\omega)=z^{\mu}_{s+t}(\omega).$$
The three terms in equation (\ref{Phitheta}) can be computed as
$$\Phi^{\mu}_{s}(\theta_{t}\omega)=\int_{0}^{s}z^{\mu}_{\tilde{s}}(\theta_{t}\omega)\mathrm{d}\tilde{s}=\int_{0}^{s}z^{\mu}_{\tilde{s}+t}(\omega)\mathrm{d}\tilde{s}=\int_{t}^{s+t}z^{\mu}_{\tilde{t}}(\omega)\mathrm{d}\tilde{t},$$
$$\Phi^{\mu}_{s}(\theta_{t}\omega)=\int_{t}^{s+t}z^{\mu}_{\tilde{t}}(\omega)\mathrm{d}\tilde{t}=\int_{0}^{s+t}z^{\mu}_{\tilde{t}}(\omega)\mathrm{d}\tilde{t}-\int_{0}^{t}z^{\mu}_{\tilde{t}}(\omega)\mathrm{d}\tilde{t}=\Phi^{\mu}_{s+t}(\omega)-\Phi^{\mu}_{t}(\omega).$$
Thus the claim is  proved.
\end{proof}
Similar to \cite{JinqiaoDuan}, claim \ref{Phiproperty} leads to the following lemma.
\begin{Lemma}\label{lmstasolu}
The system
\begin{eqnarray}\label{eq:newlRDE}
\frac{\mathrm{d}\hat{x}^{\mu,\varepsilon}}{\mathrm{d}t}=\frac{1}{\varepsilon}A\hat{x}^{\mu,\varepsilon}+\frac{\sigma}{\sqrt{\varepsilon}}\dot{\Phi}^{\mu}_{t}(\omega),\qquad \hat{x}^{\mu,\varepsilon}\in \mathbb{H}_{1},
\end{eqnarray}
has a stationary solution
$$\hat{x}^{\mu,\varepsilon}(\theta_{t}\omega)=\frac{1}{\sqrt{\varepsilon}}\int_{-\infty}^{t}e^{\frac{1}{\varepsilon}A(t-s)}\sigma\mathrm{d}\Phi^{\mu}_{s}(\omega),$$
through the random variable
\begin{eqnarray}\label{rvinitial}
\hat{x}^{\mu,\varepsilon}(\omega)=\frac{1}{\sqrt{\varepsilon}}\int_{-\infty}^{0}e^{-\frac{1}{\varepsilon}As}\sigma\mathrm{d}\Phi^{\mu}_{s}(\omega).
\end{eqnarray}
\end{Lemma}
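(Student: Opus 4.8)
The plan is to mirror the construction used for the auxiliary equation (\ref{eq:orglSDE}) of the original system, exploiting the fact that, because $\Phi^{\mu}_t(\omega)$ is continuously differentiable with $\dot\Phi^{\mu}_t(\omega)=z^{\mu}_t(\omega)=z^{\mu}(\theta_t\omega)$, the convolution against $\mathrm{d}\Phi^{\mu}_s$ is in fact a pathwise Riemann integral $\frac{1}{\sqrt\varepsilon}\int e^{\frac{1}{\varepsilon}A(t-s)}\sigma z^{\mu}_s(\omega)\,\mathrm{d}s$. This reduces the whole argument to a deterministic, $\omega$-by-$\omega$ computation, and it is precisely the analytic smoothness advantage of the integrated O-U process emphasized in the introduction.

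First I would establish that the improper integral defining $\hat{x}^{\mu,\varepsilon}(\omega)$ converges absolutely in $\mathbb{H}_1$. Using the spectral condition $(A1)$, for $s\le 0$ one has $\|e^{-\frac{1}{\varepsilon}As}\sigma\|_{\mathbb{H}_1}\le e^{\frac{\gamma_1}{\varepsilon}s}\|\sigma\|_{\mathbb{H}_1}$, which decays exponentially as $s\to-\infty$. Since the stationary O-U process is tempered, i.e. it grows sub-exponentially as $s\to-\infty$ (a property recorded in Lemma $2.1$ of \cite{J. Duan K. Lu and B. Schmalfuss 2}), the product $e^{\frac{\gamma_1}{\varepsilon}s}|z^{\mu}_s(\omega)|$ is integrable on $(-\infty,0]$. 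Hence $\hat{x}^{\mu,\varepsilon}(\omega)$ is a well-defined random variable, and the same bound gives convergence of $\hat{x}^{\mu,\varepsilon}(\theta_t\omega)$ for every $t$.

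Next I would verify the two defining properties. For the differential equation, I differentiate $t\mapsto\hat{x}^{\mu,\varepsilon}(\theta_t\omega)=\frac{1}{\sqrt\varepsilon}\int_{-\infty}^t e^{\frac{1}{\varepsilon}A(t-s)}\sigma z^{\mu}_s(\omega)\,\mathrm{d}s$: the upper-limit term yields $\frac{1}{\sqrt\varepsilon}\sigma z^{\mu}_t(\omega)=\frac{\sigma}{\sqrt\varepsilon}\dot\Phi^{\mu}_t(\omega)$, while differentiating the semigroup inside the integral produces $\frac{1}{\varepsilon}A\hat{x}^{\mu,\varepsilon}(\theta_t\omega)$, which together reproduce (\ref{eq:newlRDE}); here the standing hypothesis $\|A\sigma\|_{\mathbb{H}_1}<\infty$, i.e. $\sigma\in D(A)$, together with the continuity of $z^{\mu}_t$ is what legitimizes this differentiation in the $C_0$-semigroup setting. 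For stationarity, I would invoke the cocycle identity $z^{\mu}_s(\theta_t\omega)=z^{\mu}_{s+t}(\omega)$ already derived in the proof of Claim \ref{Phiproperty}: substituting it into $\frac{1}{\sqrt\varepsilon}\int_{-\infty}^0 e^{-\frac{1}{\varepsilon}As}\sigma z^{\mu}_s(\theta_t\omega)\,\mathrm{d}s$ and changing variables $r=s+t$ returns exactly the claimed expression for $\hat{x}^{\mu,\varepsilon}(\theta_t\omega)$, which is the statement that this orbit is stationary.

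The differentiation and the change of variables are routine. I expect the only genuine subtlety to be the convergence step: it hinges on the temperedness of the stationary O-U process as $s\to-\infty$, which is not self-contained in the excerpt and must be imported from \cite{J. Duan K. Lu and B. Schmalfuss 2}. Combined with the exponential contraction supplied by $(A1)$, this is what guarantees that the improper integral defines a genuine $\mathbb{H}_1$-valued random variable rather than a merely formal expression.
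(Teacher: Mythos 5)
Your proposal is correct and follows the same overall strategy as the paper: both arguments rest on the shift identity $z^{\mu}_{s}(\theta_{t}\omega)=z^{\mu}_{s+t}(\omega)$ (equivalently $\mathrm{d}\Phi^{\mu}_{s}(\theta_{t}\omega)=\mathrm{d}\Phi^{\mu}_{s+t}(\omega)$) from Claim \ref{Phiproperty}, and both identify the stationary orbit with the two-sided convolution $\frac{1}{\sqrt{\varepsilon}}\int_{-\infty}^{t}e^{\frac{1}{\varepsilon}A(t-s)}\sigma\,\mathrm{d}\Phi^{\mu}_{s}(\omega)$. The one substantive difference is in how the solution property is checked: the paper substitutes the random datum (\ref{rvinitial}) into the variation-of-constants formula and merges the two integrals, so that the fixed-point relation $\hat{x}(t,\omega,\hat{x}^{\mu,\varepsilon}(\omega))=\hat{x}^{\mu,\varepsilon}(\theta_{t}\omega)$ is read off directly, whereas you differentiate the orbit $t\mapsto\hat{x}^{\mu,\varepsilon}(\theta_{t}\omega)$ pathwise and recover (\ref{eq:newlRDE}); your route additionally needs $\sigma\in D(A)$ with $\|A\sigma\|_{\mathbb{H}_{1}}<\infty$ (which the paper does assume) and, strictly speaking, uniqueness of solutions of the linear equation to conclude that this orbit is the one generated by the cocycle from $\hat{x}^{\mu,\varepsilon}(\omega)$ --- a trivial point here, but worth one sentence. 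Your explicit convergence check for the improper integral, pairing the exponential decay from $(A1)$ against the sublinear growth of $z^{\mu}(\theta_{s}\omega)$ as $s\to-\infty$, is absent from the paper's proof of this lemma (it appears only later, inside the proof of Proposition \ref{OU-RSM}), and including it is a genuine improvement in completeness.
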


\begin{proof}
The solution of system (\ref{eq:newlRDE}) is
\begin{eqnarray}\label{solutstat}
\hat{x}(t,\omega,\hat{x}_{0})=e^{\frac{1}{\varepsilon}At}\hat{x}_{0}+\frac{1}{\sqrt{\varepsilon}}\int_{0}^{t}e^{\frac{1}{\varepsilon}A(t-s)}\sigma\mathrm{d}\Phi^{\mu}_{s}(\omega).
\end{eqnarray}
On the one hand, substitute the random variable $(\ref{rvinitial})$ into the solution $(\ref{solutstat})$
\begin{equation*}
\begin{split}
\hat{x}(t,\omega,\hat{x}^{\mu,\varepsilon}(\omega))&=e^{\frac{1}{\varepsilon}At}\hat{x}^{\mu,\varepsilon}(\omega)+\frac{1}{\sqrt{\varepsilon}}\int_{0}^{t}e^{\frac{1}{\varepsilon}A(t-s)}\sigma\mathrm{d}\Phi^{\mu}_{s}(\omega)\\
&=e^{\frac{1}{\varepsilon}At}\frac{1}{\sqrt{\varepsilon}}\int_{-\infty}^{0}e^{-\frac{1}{\varepsilon}As}\sigma\mathrm{d}\Phi^{\mu}_{s}(\omega)+\frac{1}{\sqrt{\varepsilon}}\int_{0}^{t}e^{\frac{1}{\varepsilon} A(t-s)}\sigma\mathrm{d}\Phi^{\mu}_{s}(\omega)\\
&=\frac{1}{\sqrt{\varepsilon}}\int_{-\infty}^{t}e^{\frac{1}{\varepsilon}A(t-s)}\sigma\mathrm{d}\Phi^{\mu}_{s}(\omega).
 \end{split}
 \end{equation*}
 On the other hand, by differentiating expression (\ref{Phitheta}) in Claim \ref{Phiproperty} on both sides with respect to $s$, we get
$$\mathrm{d}\Phi^{\mu}_{s}(\theta_{t}\omega)=\mathrm{d}\Phi^{\mu}_{t+s}(\omega).$$
By this relation, expression $(\ref{rvinitial})$ and a variable transformation, we obtain
 \begin{eqnarray*}
 \begin{aligned}
 \hat{x}^{\mu,\varepsilon}(\theta_{t}\omega)
 &=\frac{1}{\sqrt{\varepsilon}}\int_{-\infty}^{0}e^{-\frac{1}{\varepsilon}As}\sigma\mathrm{d}\Phi^{\mu}_{s}(\theta_{t}\omega)\\
 &=\frac{1}{\sqrt{\varepsilon}}\int_{-\infty}^{0}e^{-\frac{1}{\varepsilon}As}\sigma\mathrm{d}\Phi^{\mu}_{s+t}(\omega)\\
 &=\frac{1}{\sqrt{\varepsilon}}\int_{-\infty}^{t}e^{\frac{1}{\varepsilon}A(t-s)}\sigma\mathrm{d}\Phi^{\mu}_{s}(\omega).
 \end{aligned}
 \end{eqnarray*}
We get the relation
 $$\phi(t,\omega,\hat{x}^{\mu,\varepsilon}(\omega))=\hat{x}^{\mu,\varepsilon}(\theta_{t}\omega).$$
Hence
$$\hat{x}^{\mu,\varepsilon}(\theta_{t}\omega)=\frac{1}{\sqrt{\varepsilon}}\int_{-\infty}^{t}e^{\frac{1}{\varepsilon}A(t-s)}\sigma\mathrm{d}\Phi^{\mu}_{s}(\omega)$$
 is a stationary solution for the random dynamical system (\ref{eq:newlRDE}).
\end{proof}
\begin{Remark}
 We will see the  evolution  rate for the  stationary solution $\hat{x}^{\mu,\varepsilon}(\theta_{t}\omega)$ is slower than $|t|$. One of the benefits  of this transform induced by the stationary solution $\hat{x}^{\mu,\varepsilon}(\theta_{t}\omega)$ is that it will not affect the exponential rate of the fast variables.
\end{Remark}
For the Wong-Zakai system (\ref{eq:newRDE}), the counterpart transformation is
\begin{eqnarray}\label{newradtrs}
T^{\mu,\varepsilon}(\omega,(x^{\mu,\varepsilon},y^{\mu,\varepsilon}))=(x^{\mu,\varepsilon}-\hat{x}^{\mu,\varepsilon}(\omega),y^{\mu,\varepsilon}).
\end{eqnarray}
Through transformation (\ref{newradtrs}), Wong-Zakai system (\ref{eq:newRDE}) becomes
\begin{eqnarray}\label{eq:newRDE2}
\qquad\,\,\,\left\{\begin{array}{l}
\mathrm{d}X^{\mu,\varepsilon}(t)=\frac{1}{\varepsilon}AX^{\mu,\varepsilon}(t)\mathrm{d}t+\frac{1}{\varepsilon}f(X^{\mu,\varepsilon}(t)+\hat{x}^{\mu,\varepsilon}(\theta_{t}\omega),Y^{\mu,\varepsilon}(t))\mathrm{d}t,~~X_{0}^{\mu,\varepsilon}=\eta-\hat{x}^{\mu,\varepsilon}(\omega),\\
\mathrm{d}Y^{\mu,\varepsilon}(t)=BY^{\mu,\varepsilon}(t)\mathrm{d}t+g(X^{\mu,\varepsilon}(t)+\hat{x}^{\mu,\varepsilon}(\theta_{t}\omega),Y^{\mu,\varepsilon}(t))\mathrm{d}t,~~~~~~\,Y_{0}^{\mu,\varepsilon}=\xi.
\end{array}
\right.
\end{eqnarray}
Its solution in the mild sense is
\begin{eqnarray*}
\left\{\begin{array}{l}
X^{\mu,\varepsilon}(t)=e^{\frac{A}{\varepsilon}t}X_{0}^{\mu,\varepsilon}+\frac{1}{\varepsilon}\int_{0}^{t}e^{\frac{A}{\varepsilon}(t-s)}f(X^{\mu,\varepsilon}(s)+\hat{x}^{\mu,\varepsilon}(\theta_{s}\omega),Y^{\mu,\varepsilon}(s))\mathrm{d}s,\\
Y^{\mu,\varepsilon}(t)=e^{Bt}Y_{0}^{\mu,\varepsilon}+\int_{0}^{t}e^{B(t-s)}g(X^{\mu,\varepsilon}(s)+\hat{x}^{\mu,\varepsilon}(\theta_{s}\omega)),Y^{\mu,\varepsilon}(s)\mathrm{d}s.
\end{array}
\right.
\end{eqnarray*}
Similarly, the solution mapping
\begin{eqnarray}\label{rdesolutionmapping2}
(t,\omega,Z_{0}^{\mu,\varepsilon})\to Z^{\mu,\varepsilon}(t,\omega,Z_{0}^{\mu,\varepsilon})
\end{eqnarray}
from $\mathbb{R}\times\Omega\times\mathbb{H}$ to $\mathbb{H}$
is $(\mathcal{B}(\mathbb{R})\times\mathcal{F}\times\mathcal{B}(\mathbb{H}),\,\mathcal{B}(\mathbb{H}))$-measurable, it generates a random dynamical system. Then for any $\zeta\in\mathbb{H}$ and $\omega\in\Omega$, the mapping
\begin{eqnarray}\label{solutionmapping2}
(t,\omega,\zeta)\to (T^{\mu,\varepsilon})^{-1}(\theta_{t}\omega,\cdot)\circ Z^{\mu,\varepsilon}(t,\omega,T^{\mu,\varepsilon}(\omega,\zeta))\triangleq z^{\mu,\varepsilon}(t,\omega,\zeta)
\end{eqnarray}
from $\mathbb{R}\times\Omega\times\mathbb{H}$ to $\mathbb{H}$
is a solution of Wong-Zakai system  (\ref{eq:newRDE}) by the deterministic chain rule and the definition of $\Phi^{\mu}$. Conversely,
$$(t,\omega,Z_{0}^{\mu,\varepsilon})\to T^{\mu,\varepsilon}(\theta_{t}\omega,\cdot)\circ z^{\mu,\varepsilon}(t,\omega,(T^{\mu,\varepsilon})^{-1}(\omega,Z_{0}^{\mu,\varepsilon}))$$
is a solution of system (\ref{eq:newRDE2}). Therefore the mapping (\ref{solutionmapping2}) gives all the solutions of Wong-Zakai system (\ref{eq:newRDE}).
Furthermore, the mapping (\ref{solutionmapping2}) is $(\mathcal{B}(\mathbb{R})\times\mathcal{F}\times\mathcal{B}(\mathbb{H}),\,\mathcal{B}(\mathbb{H}))$-measurable.
Consequently, Wong-Zakai system (\ref{eq:newRDE}) generates a random dynamical system defined by (\ref{solutionmapping2}). The two random dynamical systems defined by (\ref{rdesolutionmapping2}) and (\ref{solutionmapping2}) are conjugate through the invertible transform $T^{\mu,\varepsilon}$.
Ultimately, we   get the random slow manifold of system (\ref{eq:newRDE2}):
$$M^{\mu,\varepsilon}(\omega)=\{(H^{\mu,\varepsilon}(\omega,\xi),\xi): \xi\in \mathbb{H}_{2}\},$$
and through the transformation $T^{\mu,\varepsilon}$, we get  the Wong-Zakai random slow manifold for the  system (\ref{eq:newRDE}):
\begin{eqnarray}\label{manif:newRDE}
\begin{aligned}
\mathcal{M}^{\mu,\varepsilon}(\omega)
=&(T^{\mu,\varepsilon})^{-1}M^{\mu,\varepsilon}(\omega)\\
=&\{(H^{\mu,\varepsilon}(\omega,\xi)+\hat{x}^{\mu,\varepsilon}(\omega),\xi):\xi\in \mathbb{H}_{2}\}\\
\triangleq&\left\{(h^{\mu,\varepsilon}(\omega,\xi),\xi):\xi\in \mathbb{H}_{2}\right\}.
\end{aligned}
\end{eqnarray}
In fact, $\mathcal{M}^{\mu,\varepsilon}(\omega)$ is a random set  according to   Definition \ref{randomset}. Function $h^{\mu,\varepsilon}$ has the same Lipschitz condition with function $H^{\mu,\varepsilon}$.   The random set $\mathcal{M}^{\mu,\varepsilon}(\omega)$ is invariant under $z^{\mu,\varepsilon}$, as
\begin{eqnarray*}
\begin{aligned}
z^{\mu,\varepsilon}(t,\omega,\mathcal{M}^{\mu,\varepsilon}(\omega))
=&(T^{\mu,\varepsilon})^{-1}(\theta_{t}\omega,Z^{\mu,\varepsilon}(t,\omega,T^{\mu,\varepsilon}(\omega,\mathcal{M}^{\mu,\varepsilon}(\omega))))\\
=&(T^{\mu,\varepsilon})^{-1}(\theta_{t}\omega,Z^{\mu,\varepsilon}(t,\omega,M^{\mu,\varepsilon}(\omega)))\\
\subset& (T^{\mu,\varepsilon})^{-1}(\theta_{t}\omega,M^{\mu,\varepsilon}(\theta_{t}\omega)\\
=&\mathcal{M}^{\mu,\varepsilon}(\theta_{t}\omega).
\end{aligned}
\end{eqnarray*}

\section{Proof of the main results}\label{pfexwsm}
In this section, we first concentrate on the existence of the Wong-Zakai random slow manifold. 

\medskip

{\bf Proof of Proposition \ref{OU-RSM}.}
\begin{proof}
From the discussion about Wong-Zakai system $(\ref{eq:newRDE})$ in Section \ref{convert}, we only need to prove the result for converting systems.\\
Introduce operator $\mathcal{J}^{\mu,\varepsilon}$ by
\begin{eqnarray*}
\mathcal{J}^{\mu,\varepsilon}(z(\cdot)) \triangleq \begin{pmatrix}
\mathcal{J}_{1}^{\mu,\varepsilon}(z(\cdot))\\
\mathcal{J}_{2}^{\mu,\varepsilon}(z(\cdot))
\end{pmatrix},
\end{eqnarray*}
where
\begin{equation*}
\begin{split}
&\mathcal{J}_{1}^{\mu,\varepsilon}(z(\cdot))[t]=\frac{1}{\varepsilon}\int_{-\infty}^{t}e^{\frac{A(t-s)}{\varepsilon}}f(X^{\mu,\varepsilon}(s)+\hat{x}^{\mu,\varepsilon}(\theta_{s}\omega),Y^{\mu,\varepsilon}(s))\mathrm{d}s,~t\le 0,\\
&\mathcal{J}_{2}^{\mu,\varepsilon}(z(\cdot))[t]=e^{Bt}\xi+\int_{0}^{t}e^{B(t-s)}g(X^{\mu,\varepsilon}(s)+\hat{x}^{\mu,\varepsilon}(\theta_{s}\omega),Y^{\mu,\varepsilon}(s))\mathrm{d}s,~t\le 0.
\end{split}
\end{equation*}
At first, refer to \cite{J. Duan K. Lu and B. Schmalfuss, JimSDEnisis}, we prove the following integral equation exists unique solution in Banach space $C_{-\frac{\rho}{\varepsilon}}^{-}$, and it gives all the solutions of system (\ref{eq:newRDE2}) with initial value $(\eta-\hat{x}^{\mu,\varepsilon}(\omega),\xi)$.
\begin{eqnarray}\label{eq:newintsolu}
\begin{pmatrix}
X^{\mu,\varepsilon}(t)\\
Y^{\mu,\varepsilon}(t)
\end{pmatrix}
=\begin{pmatrix}
\frac{1}{\varepsilon}\int_{-\infty}^{t}e^{\frac{A(t-s)}{\varepsilon}}f(X^{\mu,\varepsilon}(s)+\hat{x}^{\mu,\varepsilon}(\theta_{s}\omega),Y^{\mu,\varepsilon}(s))\mathrm{d}s\\
e^{Bt}\xi+\int_{0}^{t}e^{B(t-s)}g(X^{\mu,\varepsilon}(s)+\hat{x}^{\mu,\varepsilon}(\theta_{s}\omega),Y^{\mu,\varepsilon}(s))\mathrm{d}s
\end{pmatrix},
\quad t\leq0.
\end{eqnarray}
Denote by $Z^{\mu,\varepsilon}(t,\omega,\xi)=(X^{\mu,\varepsilon}(t,\omega,\xi),Y^{\mu,\varepsilon}(t,\omega,\xi))$ the solution of equation \eqref{eq:newintsolu}.
For notational simplicity, we denote $X(t)=X^{\mu,\varepsilon}(t,\omega)$, $Y(t)=Y^{\mu,\varepsilon}(t,\omega)$ in the proof procedure. On the one hand, if process $(X(t),Y(t))$ is the solution of system (\ref{eq:newRDE2}) with initial value
 $(\eta-\hat{x}^{\mu,\varepsilon}(\omega),\xi)$ and is in Banach space $C_{-\frac{\rho}{\varepsilon}}^{-}$. For $\tau\leq t\leq0$, applying the integrating factor method to system (\ref{eq:newRDE2}),
\begin{eqnarray}\label{solu:pi2}
X(t)=e^{\frac{1}{\varepsilon}At}\Big(e^{-\frac{1}{\varepsilon}A\tau}X(\tau)+\frac{1}{\varepsilon}\int_{\tau}^{t}e^{-\frac{1}{\varepsilon}As}f(X(s)+\hat{x}^{\mu,\varepsilon}(\theta_{s}\omega),Y(s))\mathrm{d}s\Big).
\end{eqnarray}
\begin{eqnarray}
Y(t)=e^{B(t-\tau)}Y(\tau)+e^{Bt}\int_{\tau}^{t}e^{-Bs}g(X(s)+\hat{x}^{\mu,\varepsilon}(\theta_{s}\omega),Y(s))\mathrm{d}s.
\end{eqnarray}
With the fact that $\hat{x}^{\mu,\varepsilon}(\theta_{s}\omega)\in C_{-\frac{\rho}{\varepsilon}}^{1,-}$, we can check the form of $Y(t)$ is bounded under $||\cdot||_{C_{-\frac{\rho}{\varepsilon}}^{2,-}}$ by setting $\tau=0$. In fact
\begin{eqnarray*}
\begin{aligned}
||Y(t)||_{C_{-\frac{\rho}{\varepsilon}}^{2,-}}
\leq&\sup_{t\leq0}e^{(\frac{\rho}{\varepsilon}-\gamma_{2})t}\Big(||\xi||_{\mathbb{H}_{2}}+\int_{t}^{0}e^{\gamma_{2}s}K(||X(s)+\hat{x}^{\mu,\varepsilon}(\theta_{s}\omega)||_{\mathbb{H}_{1}}+||Y(s)||_{\mathbb{H}_{2}})\mathrm{d}s\Big)\\
=&||\xi||_{\mathbb{H}_{2}}+\frac{K}{\frac{\rho}{\varepsilon}-\gamma_{2}}(||X(s)||_{C_{-\frac{\rho}{\varepsilon}}^{1,-}}+||Y(s)||_{C_{-\frac{\rho}{\varepsilon}}^{2,-}}+||\hat{x}^{\mu,\varepsilon}(\theta_{s}\omega)||_{C_{-\frac{\rho}{\varepsilon}}^{1,-}})
<\infty.
\end{aligned}
\end{eqnarray*}
To find the special form for $X(t)$ such that $(X(t),Y(t))$ is in the space $C_{-\frac{\rho}{\varepsilon}}^{-}$, and notice that
$$||X(\cdot)||_{C_{-\frac{\rho}{\varepsilon}}^{1,-}}=\sup_{t\leq0}e^{\frac{\rho}{\varepsilon}t}||e^{\frac{1}{\varepsilon}At}\big(e^{-\frac{1}{\varepsilon}A\tau}X(\tau)+\frac{1}{\varepsilon}\int_{\tau}^{t}e^{-\frac{1}{\varepsilon}As}f(X(s)+\hat{x}^{\mu,\varepsilon}(\theta_{s}\omega),Y(s))\mathrm{d}s\big)||_{\mathbb{H}_{1}}.$$
Then the following inequality holds for $t\leq0$. Let $t\to-\infty$, we get
\begin{equation*}
\begin{aligned}
&||\big(e^{-\frac{1}{\varepsilon}A\tau}X(\tau)+\frac{1}{\varepsilon}\int_{\tau}^{t}e^{-\frac{1}{\varepsilon}As}f(X(s)+\hat{x}^{\mu,\varepsilon}(\theta_{s}\omega),Y(s))\mathrm{d}s\big)||_{\mathbb{H}_{1}}
\leq e^{\frac{\gamma_{1}}{\varepsilon}t}e^{-\frac{\rho}{\varepsilon}t}||X(\cdot)||_{C_{-\frac{\rho}{\varepsilon}}^{1,-}}\to0.
\end{aligned}
\end{equation*}
Hence for $t\leq0$,
$$X(\tau)=-\frac{1}{\varepsilon}e^{\frac{1}{\varepsilon}A\tau}\int_{\tau}^{t}e^{-\frac{1}{\varepsilon}As}f(X(s)+\hat{x}^{\mu,\varepsilon}(\theta_{s}\omega),Y(s))\mathrm{d}s.$$
Let $t\to-\infty$ on both sides of the above expression, and replacing time variable $\tau$ by $t$, we get
$$X(t)=\frac{1}{\varepsilon}\int_{-\infty}^{t}e^{\frac{A}{\varepsilon}(t-s)}f(X(s)+\hat{x}^{\mu,\varepsilon}(\theta_{s}\omega),Y(s))\mathrm{d}s.$$
Hence if $(X(t),Y(t))$ is a solution  of system (\ref{eq:newRDE2}) in the space $C_{-\frac{\rho}{\varepsilon}}^{-}$ with initial value $(\eta-\hat{x}^{\mu,\varepsilon}(\omega),\xi)$, then it can be written as in \eqref{eq:newintsolu}.\\
On the other hand, when process $Z(t)=(X(t),Y(t))$ is written in form \eqref{eq:newintsolu} and belong to Banach space $C_{-\frac{\rho}{\varepsilon}}^{-}$, it is easy to deduce that $Z(t)$ solves the system (\ref{eq:newRDE2}) by a direct computation.\\
Next, we can prove the operator $\mathcal{J}^{\mu,\varepsilon}$ maps $C^{-}_{-\frac{\rho}{\varepsilon}}$ into itself. Note that for each fixed $\mu>0$,
$$\hat{x}^{\mu,\varepsilon}(\theta_{t}\omega)\in C_{-\frac{\rho}{\varepsilon}}^{1,-}.$$
In fact, refer to Lemma $2.1$ in \cite{J. Duan K. Lu and B. Schmalfuss 2} about the properties of Brownian motion and O-U process
$$\frac{z^{\mu}(\theta_{t}\omega)}{t}\to0,\quad\mbox{as}\quad t\to\pm\infty,\quad\mbox{for each fixed}\quad \mu>0.$$

For a positive number $M$, we can find a positive number $T$ such that
$$\frac{|z^{\mu}(\theta_{t}\omega)|}{|t|}<M\quad\mbox{for}\quad t\in(-\infty,-T).$$
We can deduce
\begin{eqnarray*}
\begin{aligned}
\sup_{t\leq0}e^{\frac{\rho}{\varepsilon}t}||\hat{x}^{\mu}(\theta_{t}\omega)||_{\mathbb{H}_{1}}
=&\sup_{t\leq0}e^{\frac{\rho}{\varepsilon}t}||\int_{-\infty}^{t}\frac{1}{\sqrt{\varepsilon}}e^{\frac{1}{\varepsilon}A(t-s)}\sigma\mathrm{d}\Phi^{\mu}_{s}(\omega)||_{\mathbb{H}_{1}}\\
\leq&\sup_{t\leq0}e^{\frac{\rho}{\varepsilon}t}\int_{-\infty}^{t}\frac{||\sigma||_{\mathbb{H}_{1}}}{\sqrt{\varepsilon}}|s| e^{-\frac{\gamma_{1}}{\varepsilon}(t-s)}\frac{|z^{\mu}(\theta_{s}\omega)|}{|s|}\mathrm{d}s\\
\leq&\sup_{t\leq0}(M\frac{||\sigma||_{\mathbb{H}_{1}}}{\sqrt{\varepsilon}}\frac{\varepsilon}{\gamma_{1}}(\frac{\varepsilon}{\gamma_{1}}+T)e^{\frac{\rho}{\varepsilon}t}+M\frac{||\sigma||_{\mathbb{H}_{1}}}{\sqrt{\varepsilon}}\frac{\varepsilon}{\gamma_{1}})\\
<&\infty.
\end{aligned}
\end{eqnarray*}
Taking $z=(x,y)\in C^{-}_{-\frac{\rho}{\varepsilon}}$, we have
$$||\mathcal{J}^{\mu,\varepsilon}||_{C_{-\frac{\rho}{\varepsilon}}}\leq\kappa(K,\gamma_{1},\gamma_{2},\rho,\varepsilon)(||z||_{C_{-\frac{\rho}{\varepsilon}}^{-}}+||\hat{x}^{\mu,\varepsilon}(\theta_{s}\omega)||_{C_{-\frac{\rho}{\varepsilon}}^{1,-}})+||\xi||_{\mathbb{H}_{2}}$$
with
\begin{eqnarray}\label{param:kappa}
\kappa(K,\gamma_{1},\gamma_{2},\rho,\varepsilon)=\frac{K}{\gamma_{1}-\rho}+\frac{\varepsilon K}{\rho-\varepsilon\gamma_{2}}.
\end{eqnarray}
The conclusion comes from
similar arguments leading to $\mathcal{J}^{\mu,\varepsilon}$ maps $C_{-\frac{\rho}{\varepsilon}}^{-}$ into itself.

Furthermore 
$$||\mathcal{J}^{\mu,\varepsilon}(z)-\mathcal{J}^{\mu,\varepsilon}(\bar{z})||_{C_{-\frac{\rho}{\varepsilon}}^{-}}\leq\kappa_{1}(K,\gamma_{1},\gamma_{2},\rho,\varepsilon)||z-\bar{z}||_{C_{-\frac{\rho}{\varepsilon}}^{-}}$$
with
$$\kappa_{1}(K,\gamma_{1},\gamma_{2},\rho,\varepsilon)=\frac{K}{\gamma_{1}-\rho}+\frac{\varepsilon K}{\rho-\varepsilon\gamma_{2}}\,\to\frac{K}{\gamma_{1}-\rho},\qquad \varepsilon\to0.$$
According to inequality (\ref{cod:rho}), there exists a sufficiently small positive constant $\varepsilon_{0}$ satisfying $\varepsilon_{0}<\frac{\rho}{\gamma_{2}+\frac{1}{\frac{1}{K}-\frac{1}{\gamma_{1}-\rho}}}$, such that
$$\kappa_{1}(K,\gamma_{1},\gamma_{2},\rho,\varepsilon)<1,\quad\hbox{for every}\,\,\varepsilon\hbox{ in }(0,\varepsilon_{0}].$$
That means mapping $\mathcal{J}^{\mu,\varepsilon}$ is strictly contractive in $C_{-\frac{\rho}{\varepsilon}}^{-}$. Hence the integral equation has a unique solution $Z^{\mu,\varepsilon}(t,\omega,\xi)=(X^{\mu,\varepsilon}(t,\omega,\xi),Y^{\mu,\varepsilon}(t,\omega,\xi))\quad\hbox{in}\,\,C_{-\frac{\rho}{\varepsilon}}^{-}.$\\
Define
$$H^{\mu,\varepsilon}(\omega,\xi)=\frac{1}{\varepsilon}\int_{-\infty}^{0}e^{-\frac{As}{\varepsilon}}f(X^{\mu,\varepsilon}(s)+\hat{x}^{\mu,\varepsilon}(\theta_{s}\omega),Y^{\mu,\varepsilon}(s))\mathrm{d}s,$$
then for all $\omega$ in $\Omega$ and $\xi_{1},\xi_{2}$ in $\mathbb{H}_{2}$,
\begin{eqnarray}\label{res:thsmlip}
||H^{\mu,\varepsilon}(\omega,\xi_{1})-H^{\mu,\varepsilon}(\omega,\xi_{2})||_{\mathbb{H}_{1}}\leq\frac{K}{\gamma_{1}-\rho}\frac{1}{1-\kappa(K,\gamma_{1},\gamma_{2},\rho,\varepsilon)}||\xi_{1}-\xi_{2}||_{\mathbb{H}_{2}}.
\end{eqnarray}
It follows that
\begin{eqnarray}
M^{\mu,\varepsilon}(\omega)
\triangleq\{\xi:Z^{\mu,\varepsilon}(t,\omega,\xi)\in C_{-\frac{\rho}{\varepsilon}}^{-}\}
=\{(H^{\mu,\varepsilon}(\omega,\xi),\xi): \xi\in \mathbb{H}_{2}\}.
\end{eqnarray}
In fact, it is a random set. For this, we need to show that for every  $z=(x,y)$ in $\mathbb{H}$,  the mapping
$$\omega\mapsto\inf_{z'\in H}||(x,y)-(H^{\mu,\varepsilon}(\omega,\mathbb{P}z'),\mathbb{P}z')||$$
is measurable. To this end, we need Theorem
\uppercase\expandafter{\romannumeral3}.9 in \cite{CCastaing} about the properties of mutifunctions.
Right here $(T,\mathcal{T})=(\Omega,\mathcal{F})$,  $t=\omega$, $X=\mathbb{H}$, and
$$\Gamma(\omega)=\{(H^{\mu,\varepsilon}(\omega,\mathcal{P}z'),\mathcal{P}z'); z'\in\mathbb{H}\}.$$
There exists a countable dense subset $\mathbb{H}_{c}$ due to the separability of Hilbert space $\mathbb{H}$.  Without loss of generality, we assume $\mathbb{H}_{c}=\{z'_{1},\cdots,z'_{n}\cdots\}$.\\
Let
$$\sigma_{n}(\omega)=\{(H^{\mu,\varepsilon}(\omega,\mathcal{P}z'_{n}),\mathcal{P}z'_{n})\},\quad z'_{n}\in\mathbb{H}_{c},$$
then $\Gamma(\omega)=\overline{\{\sigma_{n}(\omega)\}}$
and $(\sigma_{n}(\omega))$ is a sequence of measurable selections of $\Gamma(\omega)$. The third property of \uppercase\expandafter{\romannumeral3}.9 in \cite{CCastaing} holds true, so the second property holds true too. That is, for   $z=(x,y)\in\mathbb{H}=\mathbb{H}_{1}\times \mathbb{H}_{2}$,
$$d(z,\Gamma(\cdot))=\inf_{z'\in\mathbb{H}}||(x,y)-(H^{\mu,\varepsilon}(\omega,\mathcal{P}z'),\mathcal{P}z')||$$
as a function of $\omega$ is measurable, which implys that  $M^{\mu,\varepsilon}(\omega)$ is a random set.
If we can show that $M^{\mu,\varepsilon}(\omega)$ is invariant, then $M^{\mu,\varepsilon}(\omega)$ is the Lipschitz random slow manifold of the random dynamical system of system (\ref{eq:newRDE2}), with Lipschitz constant given by inequality (\ref{res:thsmlip}).

It is similar with \cite{HongboFuSlowmf} to show that $M^{\mu,\varepsilon}(\omega)$ is invariant. That is, for every $Z_{0}^{\mu,\varepsilon}=(\eta+\hat{x}^{\mu,\varepsilon}(\omega),\xi)$ in $M^{\mu,\varepsilon}(\omega)$, $Z^{\mu,\varepsilon}(s,\omega,Z_{0}^{\mu,\varepsilon})$ belongs to $M^{\mu,\varepsilon}(\theta_{s}\omega)$ for all positive time $s$.
\end{proof}

Now we use Wong-Zakai random slow manifold obtained in Proposition \ref{OU-RSM} to approximate the original random slow manifold of the original system (\ref{eq:orgSDE}). Ultimately, we use Wong-Zakai random slow manifold to reduce the original system (\ref{eq:orgSDE}).

\medskip

\noindent{\bf Proof of Theorem \ref{thm:appslmf}}
\begin{proof}
Under conditions of Theorem \ref{thm:appslmf}, the original random slow manifold and Wong-Zakai random slow manifold exist and have exact expression. 
$$h^{\mu,\varepsilon}(\omega,\xi)=\frac{1}{\varepsilon}\int_{-\infty}^{0}e^{-\frac{A}{\varepsilon}s}f(x^{\mu,\varepsilon},y^{\mu,\varepsilon})\mathrm{d}s+\hat{x}^{\mu,\varepsilon}(\omega),$$
$$h^{\varepsilon}(\omega,\xi)=\frac{1}{\varepsilon}\int_{-\infty}^{0}e^{-\frac{A}{\varepsilon}s}f(u^{\varepsilon},v^{\varepsilon}))\mathrm{d}s+\hat{u}^{\varepsilon}(\omega).$$
We estimate the difference between them,
\begin{eqnarray}\label{ine2}
\begin{aligned}
&||h^{\mu,\varepsilon}(\omega,\xi)-h^{\varepsilon}(\omega,\xi)||_{\mathbb{H}_{1}}\\
\leq&\frac{1}{\varepsilon}||\int_{-\infty}^{0}e^{-\frac{A}{\varepsilon}s}(f(x^{\mu,\varepsilon},y^{\mu,\varepsilon})-f(u^{\varepsilon},v^{\varepsilon}))\mathrm{d}s||_{\mathbb{H}_{1}}+||\hat{x}^{\mu,\varepsilon}(\omega)-\hat{u}^{\varepsilon}(\omega)||_{\mathbb{H}_{1}}\\
=&\frac{K}{\gamma_{1}-\rho}||z^{\mu,\varepsilon}-w^{\varepsilon}||_{C_{-\frac{\rho}{\varepsilon}}^{-}}+||\hat{x}^{\mu,\varepsilon}(\theta_{s}\omega)-\hat{u}^{\varepsilon}(\theta_{s}\omega)||_{C_{-\frac{\rho}{\varepsilon}}^{1,-}}.
\end{aligned}
\end{eqnarray}
For the first term
\begin{eqnarray}\label{ine}
\begin{aligned}
||z^{\mu,\varepsilon}(t)-w^{\varepsilon}(t)||_{C_{-\frac{\rho}{\varepsilon}}^{-}}
\leq&||Z^{\mu,\varepsilon}(t)-W^{\varepsilon}(t)||_{C_{-\frac{\rho}{\varepsilon}}^{-}}+||\hat{x}^{\mu,\varepsilon}(\theta_{t}\omega)-\hat{u}^{\varepsilon}(\theta_{t}\omega)||_{C_{-\frac{\rho}{\varepsilon}}^{1,-}}\\
\leq&\frac{1}{1-\kappa}||\hat{x}^{\mu,\varepsilon}(\theta_{s}\omega)-\hat{u}^{\varepsilon}(\theta_{s}\omega)||_{C_{-\frac{\rho}{\varepsilon}}^{1,-}},
\end{aligned}
\end{eqnarray}
where $Z^{\mu,\varepsilon}(t)=z^{\mu,\varepsilon}(t)-(\hat{x}^{\mu,\varepsilon}(\theta_{t}\omega),0)$ and $W^{\varepsilon}(t)=w^{\varepsilon}(t)-(\hat{u}^{\varepsilon}(\theta_{t}\omega),0)$.\\
In fact,
\begin{eqnarray*}
\begin{aligned}
||X^{\mu,\varepsilon}-U^{\varepsilon}||_{C_{-\frac{\rho}{\varepsilon}}^{1,-}}
\leq&\sup_{t\leq0}\frac{1}{\varepsilon}\int_{-\infty}^{t}e^{-\frac{\gamma_{1}}{\varepsilon}(t-s)}e^{\frac{\rho}{\varepsilon}s}(||Z^{\mu,\varepsilon}-W^{\varepsilon}||_{C_{-\frac{\rho}{\varepsilon}}^{-}}+||\hat{x}^{\mu,\varepsilon}(\theta_{s}\omega)-\hat{u}^{\varepsilon}(\theta_{s}\omega)||_{C_{-\frac{\rho}{\varepsilon}}^{1,-}})\mathrm{d}s\\
=&\frac{K}{\gamma_{1}-\rho}(||Z^{\mu,\varepsilon}-W^{\varepsilon}||_{C_{-\frac{\rho}{\varepsilon}}^{-}}+||\hat{x}^{\mu,\varepsilon}(\theta_{s}\omega)-\hat{u}^{\varepsilon}(\theta_{s}\omega)||_{C_{-\frac{\rho}{\varepsilon}}^{1,-}}),
\end{aligned}
\end{eqnarray*}
and
\begin{eqnarray*}
\begin{aligned}
||Y^{\mu,\varepsilon}-V^{\varepsilon}||_{C_{-\frac{\rho}{\varepsilon}}^{2,-}}
\leq&\sup_{t\leq0}\int_{t}^{0}e^{\frac{\rho}{\varepsilon}t}e^{-\gamma_{2}(t-s)}Ke^{-\frac{\rho}{\varepsilon}s}(||Z^{\mu,\varepsilon}-W^{\varepsilon}||_{C_{-\frac{\rho}{\varepsilon}}^{-}}+||\hat{x}^{\mu,\varepsilon}(\theta_{s}\omega)-\hat{u}^{\varepsilon}(\theta_{s}\omega)||_{C_{-\frac{\rho}{\varepsilon}}^{1,-}})\mathrm{d}s\\
=&K\frac{\varepsilon}{\rho-\varepsilon\gamma_{2}}(||Z^{\mu,\varepsilon}-W^{\varepsilon}||_{C_{-\frac{\rho}{\varepsilon}}^{-}}+||\hat{x}^{\mu,\varepsilon}(\theta_{s}\omega)-\hat{u}^{\varepsilon}(\theta_{s}\omega)||_{C_{-\frac{\rho}{\varepsilon}}^{1,-}}).
\end{aligned}
\end{eqnarray*}
We get
 $$||Z^{\mu,\varepsilon}-W^{\varepsilon}||_{C_{-\frac{\rho}{\varepsilon}}^{-}}\leq\frac{\kappa}{1-\kappa}||\hat{x}^{\mu,\varepsilon}(\theta_{s}\omega)-\hat{u}^{\varepsilon}(\theta_{s}\omega)||_{C_{-\frac{\rho}{\varepsilon}}^{1,-}}.$$
It is enough to consider the difference $||\hat{x}^{\mu,\varepsilon}(\theta_{s}\omega)-\hat{u}^{\varepsilon}(\theta_{s}\omega)||_{C_{-\frac{\rho}{\varepsilon}}^{1,-}}$.\\
Similarly to $\hat{x}^{\mu,\varepsilon}(\theta_{t}\omega)\in C_{-\frac{\rho}{\varepsilon}}^{1,-}$ which has been
proven in the proof procedure of Proposition \ref{OU-RSM}, we can get $\hat{u}^{\varepsilon}(\theta_{t}\omega)\in C_{-\frac{\rho}{\varepsilon}}^{1,-}$ and so is their difference.
Based on Lemma $2.1$ in \cite{J. Duan K. Lu and B. Schmalfuss 2} about the properties of Brownian motion and O-U process, we have
$$\frac{|B_{t}(\omega)|}{|t|}\to0,\quad\mbox{as}\quad t\to\pm\infty,$$
$$\frac{|\Phi^{\mu}_{t}(\omega)|}{|t|}=\frac{|\int_{0}^{t}z^{\mu}(\theta_{s}\omega)\mathrm{d}s|}{|t|}\to0,\quad\mbox{as}\quad t\to\pm\infty,\quad\mbox{uniformly for}\quad 0<\mu\ll1.$$
These properties can infer
$$\frac{|\Phi^{\mu}_{t}(\omega)-B_{t}(\omega)|}{|t|}\to0,\quad\mbox{as}\quad t\to\pm\infty,\quad\mbox{uniformly for}\quad 0<\mu\ll1.$$
The difference
\begin{eqnarray*}
\begin{aligned}
&||\hat{x}^{\mu,\varepsilon}(\theta_{t}\omega)-\hat{u}^{\varepsilon}(\theta_{t}\omega)||_{C_{-\frac{\rho}{\varepsilon}}}^{1,-}\\
=&\sup_{t\leq0}||e^{\frac{\rho}{\varepsilon}t}\frac{1}{\sqrt{\varepsilon}}\int_{-\infty}^{t}e^{\frac{1}{\varepsilon}A(t-s)}\sigma\mathrm{d}(\Phi_{s}^{\mu}(\omega)-B_{s}(\omega))||_{\mathbb{H}_{1}}\\
\leq&\sup_{t<0}|t|e^{\frac{\rho}{\varepsilon}t}\frac{||\sigma||_{\mathbb{H}_{1}}}{\sqrt{\varepsilon}}\frac{|\Phi_{t}^{\mu}-B_{t}|}{|t|}+\sup_{t<0}e^{\frac{\rho}{\varepsilon}t}\frac{||A\sigma||_{\mathbb{H}_{1}}}{\varepsilon\sqrt{\varepsilon}}\int_{-\infty}^{t}|s|e^{-\frac{\gamma_{1}}{\varepsilon}(t-s)}\frac{|\Phi_{s}^{\mu}-B_{s}|}{|s|}\mathrm{d}s\\
\triangleq&I_{1}+I_{2}.
\end{aligned}
\end{eqnarray*}
For every $\delta>0(\ll1)$, there exist a number $T$ such that
$$\frac{|\Phi_{t}^{\mu}-B_{t}|}{|t|}<\delta,\quad\mbox{and}\quad|t|e^{\frac{\rho}{\varepsilon}t}<\delta\quad\mbox{for}\quad -\infty\leq t\leq-T.$$
Referring to \cite{S. Al-azzawi}, for $t\in[-T,0]$ we have
$$\Phi^{\mu}_{t}-B_{t}=\mu z^{\mu}(\omega)-\mu z^{\mu}(\theta_{t}\omega).$$
Based on Kolmogorov's continuity criterion,
\begin{eqnarray}\label{Phiapproximation}
|\Phi_{t}^{\mu}-B_{t}|=o(\mu^{\alpha})\quad \mbox{a.s. for } \alpha\in(0,\frac{1}{2}).
\end{eqnarray}
Due to the arbitrary of $\delta$, for every $\varepsilon>0$ and $\mu$ sufficiently small
$$I_{1}=o(\mu^{\alpha})\quad \mbox{a.s.}.$$
Take number $\beta\in(\gamma_{1}-\rho,\gamma_{1})$ fixed. Then for each $\mu$ fixed, there exist a sufficiently large positive number $T_{2}$ such that $e^{\frac{\gamma_{1}-\beta}{\varepsilon}s}|B_{s}-\Phi_{s}^{\mu}|<C(\omega)\mu$, for $s\in(-\infty,-T_{2})$ and all $\omega\in\Omega$.
For this $T_{2}>0$, there exist a sufficiently small number $\mu_{0}>0$ such that $|B_{s}-\Phi_{s}^{\mu}|\leq\mu^{\alpha}$ a.s. for $s\in[-T_{2},0)$, $\mu\in(0,\mu_{0})$. 
$$\frac{||A\sigma||_{\mathbb{H}_{1}}}{\varepsilon\sqrt{\varepsilon}}e^{\frac{\rho}{\varepsilon}t}\int_{-T_{2}}^{t}e^{-\frac{\gamma_{1}}{\varepsilon}(t-s)}|B_{s}-\Phi_{s}^{\mu}|\mathrm{d}s\leq\frac{\mu^{\alpha}}{\sqrt{\varepsilon}}\frac{||A\sigma||_{\mathbb{H}_{1}}}{\gamma_{1}}, \quad\mbox{ for } \quad s\in[-T_{2},0).$$
$$\frac{||A\sigma||_{\mathbb{H}_{1}}}{\varepsilon\sqrt{\varepsilon}}e^{\frac{\rho}{\varepsilon}t}\int_{-\infty}^{-T_{2}}e^{-\frac{\gamma_{1}}{\varepsilon}(t-s)}|B_{s}-\Phi_{s}^{\mu}|\mathrm{d}s\leq C(\omega)\frac{\mu}{\sqrt{\varepsilon}}\frac{||A\sigma||_{\mathbb{H}_{1}}}{\beta},\quad\mbox{ for } \quad s\in(-\infty,-T_{2})$$
Thus 
$$I_{2}=o(\mu^{\alpha}),\quad\mbox{a.s. for every}\,\, \varepsilon.$$
Finally, we get 
$$||\hat{x}^{\mu,\varepsilon}(\theta_{t}\omega)-\hat{u}^{\varepsilon}(\theta_{t}\omega)||_{C_{-\frac{\rho}{\varepsilon}}^{1,-}}=o(\mu^{\alpha})\quad\mbox{a.s. for every}\,\, \varepsilon.$$
Based on the preceding derivation, we can obtain 
$$||h^{\mu,\varepsilon}(\omega,\xi)-h^{\varepsilon}(\omega,\xi)||_{\mathbb{H}_{1}}
\leq(\frac{K}{\gamma_{}-\rho}\frac{1}{1-\kappa_{1}(\varepsilon)}+1)||\hat{x}^{\mu,\varepsilon}(\theta_{s}\omega)-\hat{u}^{\varepsilon}(\theta_{s}\omega)||_{C_{-\frac{\rho}{\varepsilon}}^{1,-}}$$
That is $$||h^{\mu,\varepsilon}(\omega,\xi)-h^{\varepsilon}(\omega,\xi)||_{\mathbb{H}_{1}}=o(\mu^{\alpha}),\quad\mbox{a.s. for } \alpha\in(0,\frac{1}{2}).$$
Hence, Wong-Zakai random slow manifold approximates the original random slow manifold, as $\mu$ tends to zero.

Furthermore, we can obtain that this relation holds uniformly on interval $[\varepsilon_{1},\varepsilon_{2}]$ for any $\varepsilon_{1}$, $\varepsilon_{2}$ satisfying $0<\varepsilon_{1}<\varepsilon_{2}<\frac{\rho}{\gamma_{2}+\frac{1}{\frac{1}{K}-\frac{1}{\gamma_{1}-\rho}}}$.\\
In fact, inequality (\ref{ine2}) and the fact that $\kappa_{1}(\varepsilon)=\frac{K}{\gamma_{1}-\rho}+\frac{\varepsilon K}{\rho-\varepsilon\gamma_{2}}$ is a continuous function on interval $[\varepsilon_{1},\varepsilon_{2}]$,
and
$$I_{1}<\delta^{2}\frac{||\sigma||_{\mathbb{H}_{1}}}{\sqrt{\varepsilon}}+C\frac{||\sigma||_{\mathbb{H}_{1}}}{\sqrt{\varepsilon}}\mu^{\alpha}\triangleq m_{1}(\varepsilon)\mu^{\alpha},$$
$$I_{2}<C\frac{\mu^{\alpha}}{\sqrt{\varepsilon}}\frac{||A\sigma||_{\mathbb{H}_{1}}}{\gamma_{1}}+C\frac{\mu^{\alpha}}{\sqrt{\varepsilon}}\frac{||A\sigma||_{\mathbb{H}_{1}}}{\beta}\triangleq m_{2}(\varepsilon)\mu^{\alpha},$$
with a positive constant $C$, and we can take $\delta^{2}=C\mu^\alpha$. On the interval $[\varepsilon_{1},\varepsilon_{2}]$, the function $m_{1}(\varepsilon)$ is a continuous function of $\varepsilon$. On account of the extreme value theorem about continuous function on closed and bounded interval, this function must attain a maximal value on the interval $[\varepsilon_{1},\varepsilon_{2}]$. Hence $I_{1}=o(\mu^{\alpha})$ uniformly for $\varepsilon$ on the interval $[\varepsilon_{1},\varepsilon_{2}]$. Similarly, $I_{2}=o(\mu^{\alpha})$ uniformly for $\varepsilon$ on the interval $[\varepsilon_{1},\varepsilon_{2}]$. We obtain that
$$||\hat{x}^{\mu,\varepsilon}(\theta_{t}\omega)-\hat{u}^{\varepsilon}(\theta_{t}\omega)||_{C_{-\frac{\rho}{\varepsilon}}^{1,-}}=o(\mu^{\alpha}), \mbox{ a.s. }$$
uniformly on the interval $[\varepsilon_{1},\varepsilon_{2}]$.
\end{proof}

\begin{Remark}
It is not necessarily uniform on the interval $(0,\varepsilon_{1})$. We use an example to show the nonuniformity for $\varepsilon$ on the interval $(0,\varepsilon_{1})$. Notice that
\begin{eqnarray*}
\begin{aligned}
&||\hat{x}^{\mu,\varepsilon}(\theta_{t}\omega)-\hat{u}^{\varepsilon}(\theta_{t}\omega)||_{C_{-\frac{\rho}{\varepsilon}}}^{1,-}\\
=&\sup_{t\leq0}||e^{\frac{\rho}{\varepsilon}t}\frac{\sigma}{\sqrt{\varepsilon}}(\Phi_{t}^{\mu}-B_{t})+e^{\frac{\rho}{\varepsilon}t}\frac{1}{\varepsilon\sqrt{\varepsilon}}\int_{-\infty}^{t}e^{\frac{A}{\varepsilon}(t-s)}A\sigma(\Phi_{s}^{\mu}-B_{s})\mathrm{d}s||_{\mathbb{H}_{1}}.\\
\triangleq& \sup_{t\leq0} n(\varepsilon,\mu,t).\\
\end{aligned}
\end{eqnarray*}
Then $$||\hat{x}^{\mu,\varepsilon}(\theta_{t}\omega)-\hat{u}^{\varepsilon}(\theta_{t}\omega)||_{C_{-\frac{\rho}{\varepsilon}}}^{1,-}\geq n(\varepsilon,\mu,0),$$
with
\begin{eqnarray*}
\begin{aligned}
n(\varepsilon,\mu,0)=&||\frac{1}{\varepsilon\sqrt{\varepsilon}}\int_{-\infty}^{0}e^{-\frac{A}{\varepsilon}s}A\sigma(\Phi_{s}^{\mu}-B_{s})\mathrm{d}s||_{\mathbb{H}_{1}}\\
\end{aligned}
\end{eqnarray*}
We just need to show that $n(\varepsilon,\mu,0)\leq C\mu^{\alpha}$ doesn't hold uniformly for $\varepsilon\in(0,\varepsilon_{1})$.\\
By the stochastic Fubini's theorem \cite{P. E. Protter},
\begin{eqnarray*}
\begin{aligned}
\Phi_{t}^{\mu}(\omega)=&\frac{1}{\mu}\int_{0}^{t}\int_{-\infty}^{s}e^{-\frac{1}{\mu}(s-r)}\mathrm{d}B_{r}\mathrm{d}s\\
=&\frac{1}{\mu}\int_{-\infty}^{0}\int_{0}^{t}e^{-\frac{1}{\mu}(s-r)}\mathrm{d}s\mathrm{d}B_{r}+\frac{1}{\mu}\int_{0}^{t}\int_{r}^{t}e^{-\frac{1}{\mu}(s-r)}\mathrm{d}s\mathrm{d}B_{r}\\
=&\int_{-\infty}^{0}(e^{\frac{1}{\mu}r}-e^{-\frac{1}{\mu}(t-r)})\mathrm{d}B_{r}+\int_{0}^{t}(1-e^{-\frac{1}{\mu}(t-r)})\mathrm{d}B_{r}\\
=&\int_{-\infty}^{0}e^{\frac{1}{\mu}r}\mathrm{d}B_{r}-e^{-\frac{1}{\mu}t}\int_{-\infty}^{t}e^{\frac{1}{\mu}r}\mathrm{d}B_{r}+B_{t}.\\
\end{aligned}
\end{eqnarray*}
We get $$n(\varepsilon,\mu,0)=||\frac{1}{\varepsilon\sqrt{\varepsilon}}\int_{-\infty}^{0}e^{-\frac{A}{\varepsilon}s}A\sigma(\int_{-\infty}^{0}e^{\frac{1}{\mu}r}\mathrm{d}B_{r}-e^{-\frac{1}{\mu}s}\int_{-\infty}^{s}e^{\frac{1}{\mu}r}\mathrm{d}B_{r})\mathrm{d}s||_{\mathbb{H}_{1}}.$$
Denote
$$n_{1}(\varepsilon,\mu,0)=\frac{1}{\varepsilon\sqrt{\varepsilon}}\int_{-\infty}^{0}e^{-\frac{A}{\varepsilon}s}A\sigma\mathrm{d}s\int_{-\infty}^{0}e^{\frac{1}{\mu}r}\mathrm{d}B_{r},$$
$$n_{2}(\varepsilon,\mu,0)=\frac{1}{\varepsilon\sqrt{\varepsilon}}\int_{-\infty}^{0}e^{-\frac{A}{\varepsilon}s}A\sigma e^{-\frac{1}{\mu}s}\int_{-\infty}^{s}e^{\frac{1}{\mu}r}\mathrm{d}B_{r}\mathrm{d}s.$$
If $A=(-1)$, $\sigma=1$ then
\begin{eqnarray*}
\begin{aligned}
n_{1}(\varepsilon,\mu,0)
=\frac{1}{\sqrt{\varepsilon}}\int_{-\infty}^{0}e^{\frac{1}{\mu}r}\mathrm{d}B_{r}
=o(\varepsilon^{-1/2}).\\
\end{aligned}
\end{eqnarray*}
\begin{eqnarray*}
\begin{aligned}
n_{2}(\varepsilon,\mu,0)=\frac{1}{\varepsilon\sqrt{\varepsilon}}\frac{1}{\frac{1}{\varepsilon}-\frac{1}{\mu}}\int_{-\infty}^{0}(e^{\frac{1}{\mu}r}-e^{\frac{1}{\varepsilon}r})\mathrm{d}B_{r}
=o(\varepsilon^{-\frac{1}{2}}).
\end{aligned}
\end{eqnarray*}
In the last step, we use the fact that $\int_{-\infty}^{t}(e^{\frac{1}{\mu}r}-e^{\frac{1}{\varepsilon}r})\mathrm{d}B_{r}$ is a martingale with the quadratic variation $\langle \int_{-\infty}^{t}(e^{\frac{1}{\mu}r}-e^{\frac{1}{\varepsilon}r})\mathrm{d}B_{r} \rangle_{t}=\frac{\mu}{2}e^{\frac{2}{\mu}t}+\frac{\varepsilon}{2}e^{\frac{2}{\varepsilon}t}-\frac{2}{\frac{1}{\mu}+\frac{1}{\varepsilon}}e^{(\frac{1}{\mu}+\frac{1}{\varepsilon})t}$. The martingale $\int_{-\infty}^{t}(e^{\frac{1}{\mu}r}-e^{\frac{1}{\varepsilon}r})\mathrm{d}B_{r}$ can be expressed as a time changed Brownian motion $\cite{S. Al-azzawi, TaoJiang, L. Karatzas and S.E. Shreve}$ almost surely, that is
$$\int_{-\infty}^{t}(e^{\frac{1}{\mu}r}-e^{\frac{1}{\varepsilon}r})\mathrm{d}B_{r}=\tilde{B}_{\frac{\mu}{2}e^{\frac{2}{\mu}t}+\frac{\varepsilon}{2}e^{\frac{2}{\varepsilon}t}-\frac{2}{\frac{1}{\mu}+\frac{1}{\varepsilon}}e^{(\frac{1}{\mu}+\frac{1}{\varepsilon})t}} \qquad\mbox{ a.s. }$$
Take $t=0$ we can obtain
$$\int_{-\infty}^{0}(e^{\frac{1}{\mu}r}-e^{\frac{1}{\varepsilon}r})\mathrm{d}B_{r}=\tilde{B}_{\frac{\mu}{2}+\frac{\varepsilon}{2}-\frac{2}{\frac{1}{\mu}+\frac{1}{\varepsilon}}}\to{\tilde{B}}_{\frac{\mu}{2}} \quad\mbox{as } \varepsilon\to0 \qquad\mbox{ a.s. }$$
Notice that the Brownian motion $B_{t}$, $\tilde{B}_{t}$ and $\tilde{\tilde{B}}_{t}$ are different.
Hence $n(\varepsilon,\mu,0)=o(\varepsilon^{-\frac{1}{2}})$. If $A=(-1)$, $\sigma=1$ then the relation
$$||\hat{x}^{\mu,\varepsilon}(\theta_{t}\omega)-\hat{u}^{\varepsilon}(\theta_{t}\omega)||_{C_{-\frac{\rho}{\varepsilon}}^{1,-}}=o(\mu^{\alpha}), \mbox{ a.s. }$$
doesn't hold uniformly for $\varepsilon$ on interval $(0,\varepsilon_{1})$.
\end{Remark}

\medskip

Ultimately, we get our main results about Wong-Zakai approximation of the original random slow manifold, the exponential tracking property of Wong-Zakai random slow manifold about orbits of the original system, and furthermore the Wong-Zakai reduction of the original system by using Wong-Zakai random slow manifold.

\bigskip

\noindent{\bf Proof of Theorem \ref{thm:intetrack}}

\medskip
\begin{proof}
According to the theorem about exponential tracking property of the original random slow manifold in \cite{HongboFuSlowmf}, there exists an initial point $\bar{\zeta}_{o}$ on the original random slow manifold $\mathcal{M}^{\varepsilon}(\omega)$ such that
$$||w^{\varepsilon}(t,\omega,\zeta)-w^{\varepsilon}(t,\omega,\bar{\zeta}_{o})||_{\mathbb{H}}\leq C_{1}e^{-C_{2}t}||\zeta-\bar{\zeta}_{o}||_{\mathbb{H}},\quad\mbox{for}\quad t\geq0,$$
with $C_{1}=\frac{1}{1-K(\frac{1}{\gamma_{1}-\rho}+\frac{\varepsilon}{\rho-\varepsilon\gamma_{2}})}$ and $C_{2}=\frac{\rho}{\varepsilon}$.\\

\medskip
\noindent For this $\bar{\zeta}_{o}=(h^{\varepsilon}(\omega,\bar{\xi}),\bar{\xi})$, point $\bar{\zeta}=(h^{\mu,\varepsilon}(\omega,\bar{\xi}),\bar{\xi})$ is on Wong-Zakai random slow manifold $\mathcal{M}^{\mu,\varepsilon}(\omega)$.
Based on systems (\ref{eq:orgSDE}), (\ref{eq:newRDE}), Proposition \ref{OU-RSM}, Theorem \ref{thm:appslmf}, and the invariant property of random slow manifold,
\begin{eqnarray*}
\begin{aligned}
\dot{v}^{\varepsilon}(t)-\dot{y}^{\mu,\varepsilon}(t)=&B(v^{\varepsilon}(t)-y^{\mu,\varepsilon}(t))
+g(h^{\varepsilon}(\theta_{t}\omega,v^{\varepsilon}(t)),v^{\varepsilon}(t))-g(h^{\mu,\varepsilon}(\theta_{t}\omega,y^{\mu,\varepsilon}(t)),y^{\mu,\varepsilon}(t))),
\end{aligned}
\end{eqnarray*}
with initial value ${v}^{\varepsilon}(0)-{y}^{\mu,\varepsilon}(0)=\bar{\xi}-\bar{\xi}=0$.\\
According to the $C_{0}-$semigroup property of operator $B$, there exist positive constants $M$ and $\gamma$ such that the operator norm of $e^{Bt}$ is exponential bounded, that is
$$||e^{Bt}||<Me^{\gamma t}, \mbox{ for } t>0.$$
Due to Theorem \ref{thm:appslmf}, the variation of constant formula, and the semigroup property of operator $B$, we conclude
\begin{eqnarray*}
\begin{aligned}
&||v^{\varepsilon}(t)-y^{\mu,\varepsilon}(t)||_{\mathbb{H}_{2}}\\
\leq&||\int_{0}^{t}e^{B(t-s)}(g(h^{\varepsilon}(\theta_{s}\omega,v^{\varepsilon}(s)),v^{\varepsilon}(s))-g(h^{\mu,\varepsilon}(\theta_{s}\omega,y^{\mu,\varepsilon}(s)),y^{\mu,\varepsilon}(s)))\mathrm{d}s||_{\mathbb{H}_{2}}\\
\leq&\int_{0}^{t}Me^{\gamma(t-s)}K(||h^{\varepsilon}(\theta_{s}\omega,v^{\varepsilon}(s))-h^{\varepsilon}(\theta_{s}\omega,y^{\mu,\varepsilon}(s))||_{\mathbb{H}_{1}}\\
&\quad+||h^{\varepsilon}(\theta_{s}\omega,y^{\mu,\varepsilon}(s))-h^{\mu,\varepsilon}(\theta_{s}\omega,y^{\mu,\varepsilon}(s))||_{\mathbb{H}_{1}}+||v^{\varepsilon}(s)-y^{\mu,\varepsilon}(s)||_{\mathbb{H}_{2}})\mathrm{d}s\\
\leq&\int_{0}^{t}Me^{\gamma(t-s)}K(Lip h^{\varepsilon} +1)||v^{\varepsilon}(s)-y^{\mu,\varepsilon}(s)||_{\mathbb{H}_{2}}\mathrm{d}s+o(\mu^{\alpha}).\\
\end{aligned}
\end{eqnarray*}
That is
$$e^{-\gamma t}||v^{\varepsilon}(t)-y^{\mu,\varepsilon}(t)||_{\mathbb{H}_{2}}\leq o(\mu^{\alpha})+\int_{0}^{t}Me^{-\gamma s}K(Lip h^{\varepsilon} +1)||v^{\varepsilon}(s)-y^{\mu,\varepsilon}(s)||_{\mathbb{H}_{2}}\mathrm{d}s.$$
By the integral form of Gronwall inequality, for any fixed $t\geq0$,
\begin{eqnarray*}
\begin{aligned}
||v^{\varepsilon}(t)-y^{\mu,\varepsilon}(t)||_{\mathbb{H}_{2}}
\leq&o(\mu^{\alpha})e^{\gamma t}\int_{0}^{t}e^{\int_{s}^{t}K(1+Lip h^{\varepsilon})\mathrm{d}\tau}\mathrm{d}s\\
\leq&o(\mu^{\alpha})\frac{1}{K(1+Lip h^{\varepsilon})}(e^{K(1+Lip h^{\varepsilon})t}-1)e^{\gamma t}\\
=&o(\mu^{\alpha}).
\end{aligned}
\end{eqnarray*}
Based on Theorem \ref{thm:appslmf}, for almost sure $\omega\in\Omega$ and every fixed $t\geq0$,
$$||\bar{\zeta}-\bar{\zeta}_{o}||_{\mathbb{H}}=||h^{\mu,\varepsilon}(\omega,\bar{\xi})-h^{\varepsilon}(\omega,\bar{\xi})||_{\mathbb{H}_{1}}+||\bar{\xi}-\bar{\xi}||_{\mathbb{H}_{1}}=o(\mu^{\alpha}).$$
Hence for almost sure $\omega\in\Omega$ and every $t\geq0$,
\begin{eqnarray*}
\begin{aligned}
&||w^{\varepsilon}(t,\omega,\zeta)-z^{\mu,\varepsilon}(t,\omega,\bar{\zeta})||_{\mathbb{H}}\\
\leq&||w^{\varepsilon}(t,\omega,\zeta)-w^{\varepsilon}(t,\omega,\bar{\zeta}_{o})||_{\mathbb{H}}+||w^{\varepsilon}(t,\omega,\bar{\zeta}_{o})-z^{\mu,\varepsilon}(t,\omega,\bar{\zeta})||_{\mathbb{H}}\\
\leq&C_{1}e^{-C_{2}t}||\zeta-\bar{\zeta}_{o}||_{\mathbb{H}}+||h^{\varepsilon}(\theta_{t}\omega,v^{\varepsilon}(t,\omega,\bar{\zeta}_{o}))-h^{\varepsilon}(\theta_{t}\omega,y^{\mu,\epsilon}(t,\omega,\bar{\zeta}))||_{\mathbb{H}_{1}}\\
&+||h^{\varepsilon}(\theta_{t}\omega,y^{\mu,\varepsilon}(t,\omega,\bar{\zeta}))-h^{\mu,\varepsilon}(\theta_{t}\omega,y^{\mu,\varepsilon}(t,\omega,\bar{\zeta}))||_{\mathbb{H}_{1}}
+||v^{\varepsilon}(t,\omega,\bar{\zeta}_{o})-y^{\mu,\varepsilon}(t,\omega,\bar{\zeta})||_{\mathbb{H}_{2}}\\
\leq&C_{1}e^{-C_{2}t}(||\zeta-\bar{\zeta}||_{\mathbb{H}}+||\bar{\zeta}-\bar{\zeta}_{o}||_{\mathbb{H}})+(1+Lip h^{\varepsilon})||v^{\varepsilon}(t,\omega,\bar{\zeta}_{o})-y^{\mu,\varepsilon}(t,\omega,\bar{\zeta})||_{\mathbb{H}_{2}}+o(\mu^{\alpha})\\
=&C_{1}e^{-C_{2}t}||\zeta-\bar{\zeta}||_{\mathbb{H}}+o(\mu^{\alpha}).
\end{aligned}
\end{eqnarray*}
The proof is complete.
\end{proof}

Theorem \ref{thm:intetrack} provides us a theoretical basis to project the dynamical behavior of the original system on Wong-Zakai random slow manifold. Thus we get the following reduced system
\begin{eqnarray*}
\left\{\begin{array}{l}
\begin{aligned}
\dot{\tilde{v}}^{\varepsilon}&=B\tilde{v}^{\varepsilon}+g(\tilde{u}^{\varepsilon},\tilde{v}^{\varepsilon}),\\
\tilde{u}^{\varepsilon}&=h^{\mu,\varepsilon}(\theta_{t}\omega,\tilde{v}^{\varepsilon}).
\end{aligned}
\end{array}
\right.
\end{eqnarray*}
Corollary \ref{thm:intereduce} is established.
This system can be regarded as a deterministic system point wisely. Through the qualitative or quantitative characteristics of the lower dimensional deterministic system point wisely, we can detect the behavior of the original stochastic dynamical system.

\section{Application to  parameter estimation}\label{sec:estimate}
In this section, the Wong-Zakai reduction system is used to estimate a  parameter of the original system.
Consider a slow-fast stochastic system
\begin{eqnarray}\label{eq:orgSDE2}
\left\{\begin{array}{l}
\begin{aligned}
\dot{u}^{\varepsilon}=\frac{1}{\varepsilon}Au^{\varepsilon}+\frac{1}{\varepsilon}f(u^{\varepsilon},v^{\varepsilon})+\frac{\sigma}{\sqrt\varepsilon}\dot{B}_{t}(\omega), \qquad  \hbox{in}\,\,&\mathbb{H}_{1},\\
\dot{v}^{\varepsilon}=Bv^{\varepsilon}+g(u^{\varepsilon},v^{\varepsilon},a),  ~~~~~~~~~~~~~~~~~\qquad\hbox{in}\,\,&\mathbb{H}_{2}.
\end{aligned}
\end{array}
\right.
\end{eqnarray}
Here the slow subsystem    contains a parameter $a$, taking value in a closed interval $\Lambda$ in $\mathbb{R}$.
It has been proved in \cite{Ren jian} that when only slow variable is observable, we can get a good estimator using the
reduced system
\begin{eqnarray}\label{estsl}
\dot{v}^{\varepsilon}(t)=Bv^{\varepsilon}(t)+g(h^{\varepsilon}(\theta_{t}\omega,v^{\varepsilon}(t)),v^{\varepsilon}(t),a), \quad\hbox{in}\quad \mathbb{H}_{2},
\end{eqnarray}
based on exponential tracking property of the original random slow manifold $\mathcal{M}^{\varepsilon}$.  This parameter estimator,  via a stochastic Nelder-Mead simulation method,  is a good approximation to that using the original system directly with  both fast and slow variables observable. This reduces the amount of information needed from observation and saves the computational cost.  But this reduction still needs a stochastic simulation method, which is more difficult than deterministic simulation method. Since we   get the intersystem exponential tracking property of the Wong-Zakai random slow manifold $\mathcal{M}^{\mu,\varepsilon}$ to the original system (\ref{eq:orgSDE2}), we now  devise a parameter estimator using the reduced system
\begin{eqnarray}\label{estousl}
\dot{v}^{\varepsilon}(t)=Bv^{\varepsilon}(t)+g(h^{\mu,\varepsilon}(\theta_{t}\omega,v^{\varepsilon}(t)),v^{\varepsilon}(t),a), \quad\hbox{in}\quad \mathbb{H}_{2},
\end{eqnarray}
based on the deterministic simulation method. It turns out this  is an accurate estimator, comparing with the one using the original system directly.

Given the initial state  $\zeta=(\eta,\xi)$, let $v^{\varepsilon}_{ob}(t)$ be the observation of slow variable $v^{\varepsilon}(t)$ generated by a true system parameter value $a$.   We do    not use the observation of the fast variable $u^{\varepsilon}(t)$, but denote it as $u^{\varepsilon}_{ob}(t)$ just formally. Take initial state  to be $\xi$ in the two reduced systems. We denote the parameter in the Wong-Zakai reduced system (\ref{estousl}) as $a^{E}_{WS}$, with the  corresponding solution as $v^{\varepsilon}_{WS}(t)$. We denote the parameter in reduced system (\ref{estsl}) as $a_{S}^{E}$,  with the corresponding solution as $v^{\varepsilon}_{S}(t)$.
Define the square of the observation error as the objective function
$$F^{WS}(a^{E}_{WS})=\mathbb{E}\int_{0}^{T}||v^{\varepsilon}_{WS}(t)-v^{\varepsilon}_{ob}(t)||^{2}_{\mathbb{H}_{2}}\mathrm{d}s,$$
and take the minimizer $a^{E}_{WS}$ of function $F^{WS}(\cdot)$ as the estimation of the true parameter value $a$. Note that the square root of the minimum value of this objective function has error $o(\varepsilon)+o(\mu^{\alpha})$ with any $\alpha\in(0,\frac{1}{2})$ compared to zero due to the Wong-Zakai slow reduction according to Corollary \ref{thm:intereduce}.
We can verify that the difference $|a^{E}_{WS}-a|$ can be controlled by objective function and the scale parameter, that is $(F^{WS}(a^{E}_{WS}))^{\frac{1}{2}}$ and $o(\varepsilon)$, as follows
\begin{eqnarray*}
\begin{aligned}
&|a^{E}_{WS}-a|\cdot\mathbb{E}||\int_{0}^{t^{*}}e^{-Bt}\bigtriangledown_{a}g(h^{\mu,\varepsilon}(\theta_{t}\omega,v^{\varepsilon}_{WS}(t)),v^{\varepsilon}_{WS}(t),a^{'})\mathrm{d}t||_{\mathbb{H}_{2}}\\
<&\frac{\varepsilon}{\gamma_{1}-K}||\eta-h^{\mu,\varepsilon}(\xi)||_{\mathbb{H}_{1}}+\big[(1+K)e^{\gamma_{2}T}+\frac{K}{\gamma_{1}-K}\big](T\cdot F^{WS}(a^{E}_{WS}))^{\frac{1}{2}},
\end{aligned}
\end{eqnarray*}
where $T>0$, $t^{*}\in(0,T)$, $a^{'}\in(a^{E}_{WS},a)$ (or $a^{'}\in(a,a^{E}_{WS})$).\\
Denote
$$G(a,a^{E}_{WS})\triangleq||\int_{0}^{t^{*}}e^{-Bt}\bigtriangledown_{a}g(h^{\mu,\varepsilon}(\theta_{t}\omega,v^{\varepsilon}_{WS}(t)),v^{\varepsilon}_{WS}(t,a^{'})\mathrm{d}t||_{\mathbb{H}_{2}}.$$
Assume that the function $G(a,a^{E}_{WS})$ satisfies $0<G(a,a^{E}_{WS})<\infty$, and one component of function $\mathbb{E}g(u,v,a)$ is strictly monotonic with respect to $a\in\Lambda$ constrained to Wong-Zakai random slow manifold $\mathcal{M}^{\mu,\varepsilon}$. Then with the same discussion as \cite{Ren jian}, $|a^{E}_{WS}-a|$ is controlled by the Wong-Zakai slow reduction $||\eta-h^{\mu,\varepsilon}(\xi)||_{\mathbb{H}_{1}}$ to order $o(\varepsilon)$ and the objective function
$F^{WS}(a^{E}_{WS})$ to order $o(\mu^{\alpha})$ with any $\alpha\in(0,\frac{1}{2})$.

We can expand $h^{\mu,\varepsilon}(\theta_{t}\omega,v^{\varepsilon}(t))$ with respect to small $\varepsilon$ to order $o(\varepsilon^{2})$ like \cite{Ren jian 2} as follows
$$\hat{h}^{\mu,\varepsilon}(\theta_{t}\omega,v^{\varepsilon}(t))=h^{\mu,\varepsilon}_{1}+\varepsilon (h^{\mu,\varepsilon}_{2}+o(\mu^{\frac{1}{2}}))+o(\varepsilon^{2}).$$
Then system (\ref{estousl}) can be replaced by system
\begin{eqnarray}\label{expestousl}
\dot{v}^{\varepsilon}(t)=Bv^{\varepsilon}(t)+g(\hat{h}^{\mu,\varepsilon}(\theta_{t}\omega,v^{\varepsilon}(t)),v^{\varepsilon}(t),a), \quad\hbox{in}\quad \mathbb{H}_{2},
\end{eqnarray}
for the estimation.
Since
\begin{eqnarray*}
\begin{aligned}
&||v^{\varepsilon}_{ob}(t)-h^{\mu,\varepsilon}(\theta_{t}\omega,v^{\varepsilon}(t))||_{\mathbb{H}_{1}}\\
\leq&||v^{\varepsilon}_{ob}(t)-\hat{h}^{\mu,\varepsilon}(\theta_{t}\omega,v^{\varepsilon}(t))||_{\mathbb{H}_{1}}+||\hat{h}^{\mu,\varepsilon}(\theta_{t}\omega,v^{\varepsilon}(t))-h^{\mu,\varepsilon}(\theta_{t}\omega,v^{\varepsilon}(t))||_{\mathbb{H}_{1}},
\end{aligned}
\end{eqnarray*}
the error of estimator using system (\ref{expestousl}) is thus  bounded by $(F^{WS}(a^{E}_{WS}))^{\frac{1}{2}}$ and $o(\varepsilon)$.\\

We use the following example to illustrate our method for  parameter estimation.
\begin{Example}\label{example}
The SDE with a unknown parameter $a$ driven by Brownian motion is
\begin{eqnarray}\label{exorgSDE}
\left\{\begin{array}{l}
\dot{u}^{\varepsilon}=-\frac{1}{\varepsilon}u^{\varepsilon}+\frac{1}{600\varepsilon}(v^{\varepsilon})^{2}+\frac{0.1}{\sqrt{\varepsilon}}\dot{B}_{t},\,\hbox{in } \mathbb{R},\\
\dot{v}^{\varepsilon}=0.001v^{\varepsilon}-au^{\varepsilon}v^{\varepsilon},\,\hbox{in } \mathbb{R}.
\end{array}
\right.
\end{eqnarray}
\end{Example}
Its approximation system is
\begin{eqnarray}\label{exnewSDE}
\left\{\begin{array}{l}
\dot{x}^{\mu,\varepsilon}=-\frac{1}{\varepsilon}x^{\mu,\varepsilon}+\frac{1}{600\varepsilon}(y^{\mu,\varepsilon})^{2}+\frac{0.1}{\sqrt{\varepsilon}}\dot{\Phi}^{\mu}_{t},\,\hbox{in } \mathbb{R},\\
\dot{y}^{\mu,\varepsilon}=0.001y^{\mu,\varepsilon}-ax^{\mu,\varepsilon}y^{\mu,\varepsilon},\,\hbox{in } \mathbb{R}.
\end{array}
\right.
\end{eqnarray}
We compare the random slow manifolds of the two systems to interpret the previous procedure. In the following calculation we take the system parameter $a=0.1$.
In accordance with the former sections, $\mathbb{H}_{1}=\mathbb{H}_{2}=\mathbb{R}$, $\mathbb{H}=\mathbb{R}^{2}$ with $||\cdot||_{1}=||\cdot||_{2}=|\cdot|$,  $||\cdot||=||\cdot||_{1}+||\cdot||_{2}$ and
similar to $\cite{Ren jian,XingyeKan}$, there exist a random absobing set in both systems respectively. We can cut off the nonlinear functions to satisfy the global Lipschitz condition $(A2)$ without changing their long time behavior. Then there exist the Lipschitz random slow manifolds $\mathcal{M}^{\varepsilon}$ and $\mathcal{M}^{\mu,\varepsilon}$ of system $(\ref{exorgSDE})$ and $(\ref{exnewSDE})$ respectively.\\
System
$$\dot{\hat{u}}^{\varepsilon}=-\frac{1}{\varepsilon}\hat{u}^{\varepsilon}+\frac{0.1}{\sqrt{\varepsilon}}\dot{B}_{t}$$
has a stationary solution
$$\hat{u}^{\varepsilon}(\theta_{t}\omega)=\frac{0.1}{\sqrt{\varepsilon}}\int_{-\infty}^{t}e^{-\frac{1}{\varepsilon}(t-s)}\mathrm{d}B_{s},$$
through the random fixed point
$$\hat{u}^{\varepsilon}(\omega)=\frac{0.1}{\sqrt{\varepsilon}}\int_{-\infty}^{0}e^{\frac{1}{\varepsilon}s}\mathrm{d}B_{s}.$$
Through the random transformation $(\ref{orgradtrs})$,
system $(\ref{exorgSDE})$ becomes to
\begin{eqnarray}\label{exorgRDS}
\left\{\begin{array}{l}
\mathrm{d}U^{\varepsilon}(t)=-\frac{1}{\varepsilon}U^{\varepsilon}(t)\mathrm{d}t+\frac{1}{600\varepsilon} (V^{\varepsilon}(t))^{2}\mathrm{d}t,\\
\mathrm{d}V^{\varepsilon}(t)=0.001V^{\varepsilon}(t)\mathrm{d}t-0.1(U^{\varepsilon}(t)+\hat{u}^{\varepsilon}(\theta_{t}\omega))V^{\varepsilon}(t)\mathrm{d}t.
\end{array}
\right.
\end{eqnarray}
According to $\cite{HongboFuSlowmf}$, its random slow manifold is
$$M^{\varepsilon}(\omega)=\{(H^{\varepsilon}(\omega,\xi),\xi):\xi\in \mathbb{R}\},$$
where
$$H^{\varepsilon}(\omega,\xi)=\frac{1}{\varepsilon}\int_{-\infty}^{0}e^{\frac{1}{\varepsilon}s}\frac{(V^{\varepsilon}(s))^{2}}{600}\mathrm{d}s.$$
Furthermore, the random slow manifold of system $(\ref{exorgSDE})$ is
\begin{eqnarray*}
\begin{aligned}
\mathcal{M}^{\varepsilon}(\omega)=&M^{\varepsilon}(\omega)+(\hat{u}^{\varepsilon}(\omega),0)
=\{(h^{\varepsilon}(\omega,\xi)):\xi\in\mathbb{R}\},
\end{aligned}
\end{eqnarray*}
where
$$h^{\varepsilon}(\omega,\xi)=H^{\varepsilon}(\omega,\xi)+\hat{u}^{\varepsilon}(\omega).$$
Through the same procedure, system
$$\dot{\hat{x}}^{\mu,\varepsilon}=-\frac{1}{\varepsilon}\hat{x}^{\mu,\varepsilon}+\frac{0.1}{\sqrt{\varepsilon}}\dot{\Phi}^{\mu}_{t},$$
has a stationary solution
$$\hat{x}^{\mu,\varepsilon}(\theta_{t}\omega)=\frac{0.1}{\sqrt{\varepsilon}}\int_{-\infty}^{t}e^{-\frac{1}{\varepsilon}(t-s)}\mathrm{d}\Phi^{\mu}_{s},$$
through random variable
$$\hat{x}^{\mu,\varepsilon}(\omega)=\frac{0.1}{\sqrt{\varepsilon}}\int_{-\infty}^{0}e^{\frac{1}{\varepsilon}s}\mathrm{d}\Phi^{\mu}_{s}.$$
Through the random transformation $(\ref{newradtrs})$
system $(\ref{exnewSDE})$ becomes to
\begin{eqnarray}\label{exnewRDS}
\left\{\begin{array}{l}
\mathrm{d}X^{\mu,\varepsilon}(t)=-\frac{1}{\varepsilon}X^{\mu,\varepsilon}(t)\mathrm{d}t+\frac{1}{600\varepsilon} (Y^{\mu,\varepsilon}(t))^{2}\mathrm{d}t, \\
\mathrm{d}Y^{\mu,\varepsilon}(t)=0.001Y(t)\mathrm{d}t-0.1(X^{\mu,\varepsilon}(t)+\hat{x}^{\mu,\varepsilon}(\theta_{t}\omega))Y^{\mu,\varepsilon}(t)\mathrm{d}t.
\end{array}
\right.
\end{eqnarray}
According to Proposition $\ref{OU-RSM}$, its random slow manifold is
$$M^{\mu,\varepsilon}(\omega)=\{(H^{\mu,\varepsilon}(\omega,\xi),\xi):\xi\in\mathbb{R}\},$$
where
$$H^{\mu,\varepsilon}(\omega,\xi)=\frac{1}{\varepsilon}\int_{-\infty}^{0}e^{\frac{1}{\varepsilon}s}\frac{(Y^{\mu,\varepsilon}(s))^{2}}{600}\mathrm{d}s.$$
Furthermore, the random slow manifold of system $(\ref{exnewSDE})$ is
\begin{eqnarray*}
\begin{aligned}
\mathcal{M}^{\mu,\varepsilon}(\omega)=&M^{\mu,\varepsilon}(\omega)+(\hat{x}^{\mu,\varepsilon}(\omega),0)
=\{(h^{\mu,\varepsilon}(\omega,\xi),\xi):\xi\in\mathbb{R}\},
\end{aligned}
\end{eqnarray*}
where
$$h^{\mu,\varepsilon}(\omega,\xi)=H^{\mu,\varepsilon}(\omega,\xi)+\hat{x}^{\mu,\varepsilon}(\omega).$$
According to Theorem $\ref{thm:appslmf}$, for $\xi\in\mathbb{R}$
$$h^{\mu,\varepsilon}(\omega,\xi)\to h^{\varepsilon}(\omega,\xi)\quad\hbox{a.s. as }\quad\mu\to0.$$
That is the random slow manifold of system $(\ref{exnewSDE})$ approximate that of system $(\ref{exorgSDE})$ almost surely as Wong-Zakai approximation parameter $\mu\to0$.

Based on $\cite{Ren jian, Ren jian 2}$, we can plot the graph of slow manifolds $\mathcal{M}^{\varepsilon}(\omega)$ and $\mathcal{M}^{\mu,\varepsilon}(\omega)$ approximately to order o($\varepsilon^{2}$).
Let $t=\tau\varepsilon$
$$U(t)=U(\tau\varepsilon)=U_{0}(\tau)+\varepsilon U_{1}(\tau)+\varepsilon^{2}U_{2}(\tau)+\cdots$$
with initial value
\begin{eqnarray*}
\left\{\begin{array}{l}
U(0)=H(\xi,\omega)=H^{0}+\varepsilon H^{1}+\cdots\\
V(0)=\xi.
\end{array}
\right.
\end{eqnarray*}
$$H^{(0)}=\int_{-\infty}^{0}e^{-As}f(U_{0}+\hat{u}(\theta_{\varepsilon s}\omega),\xi)\mathrm{d}s=\frac{\xi^{2}}{600}.$$
$$U^{(0)}(\tau)=e^{A\tau}h^{(0)}+\int_{0}^{\tau}e^{-A(s-\tau)}f(U_{0}(s)+\hat{u}(\theta_{s\varepsilon}\omega),\xi)\mathrm{d}s=\frac{\xi^{2}}{600}.$$
By stochastic Fubini's theorem \cite{P. E. Protter}, we have
\begin{eqnarray*}
\begin{aligned}
H^{(1)}=&\int_{-\infty}^{0}e^{-As}\bigg\{f_{v}(U_{0}(s)+\hat{u}(\theta_{\varepsilon s}\omega),\xi)[Bs\xi+\int_{0}^{s}g(U_{0}(r)+\hat{u}(\theta_{\varepsilon r}\omega),\xi)\mathrm{d}r]\\
&\quad+f_{u}(U_{0}+\hat{u}(\theta_{\varepsilon r}\omega),\xi)U_{1}(s)\bigg\}\mathrm{d}s\\
=&-\frac{0.001\xi^{2}}{300}+\frac{0.1\xi^{4}}{180000}-\frac{(0.1)(0.1)\xi^{2}}{300}\int_{-\infty}^{0}se^{s}\mathrm{d}B_{s}.
\end{aligned}
\end{eqnarray*}

The slow manifold of system ($\ref{exorgRDS}$) is
\begin{eqnarray*}
\begin{aligned}
H^{\varepsilon}(\omega,\xi)=&H^{\varepsilon(0)}+\varepsilon H^{\varepsilon(1)}+o(\varepsilon^{2})\\
=&\frac{\xi^{2}}{600}+\varepsilon \bigg[-\frac{0.001\xi^{2}}{300}+\frac{0.1\xi^{4}}{180000}-\frac{0.01\xi^{2}}{300}\int_{-\infty}^{0}se^{s}\mathrm{d}B_{s}\bigg]+o(\varepsilon^{2}).\\
\end{aligned}
\end{eqnarray*}
The slow manifold of system ($\ref{exorgSDE}$) is
\begin{eqnarray}\label{exsmf1}
\qquad\begin{aligned}
&h^{\varepsilon}(\omega,\xi)\\
=&\frac{\xi^{2}}{600}+\varepsilon \bigg[-\frac{0.001\xi^{2}}{300}+\frac{0.1\xi^{4}}{180000}-\frac{0.01\xi^{2}}{300}\int_{-\infty}^{0}se^{s}\mathrm{d}B_{s}(\omega)\bigg]+o(\varepsilon^{2})+\frac{\sigma}{\sqrt{\varepsilon}}\int_{-\infty}^{0}e^{\frac{s}{\varepsilon}}\mathrm{d}B_{s}(\omega).\\
\end{aligned}
\end{eqnarray}
By (\ref{zakai}) and with the same procedure, we can infer that the slow manifold of system ($\ref{exnewSDE}$) is
\begin{eqnarray}\label{exsmf2}
\qquad\,\,\,\begin{aligned}
&h^{\mu,\varepsilon}(\omega,\xi)\\
=&\frac{\xi^{2}}{600}+\varepsilon \bigg[-\frac{0.001\xi^{2}}{300}+\frac{0.1\xi^{4}}{180000}-\frac{0.01\xi^{2}}{300}\int_{-\infty}^{0}se^{s}z_{s}(\omega)\mathrm{d}s\bigg]+o(\varepsilon^{2})+\frac{\sigma}{\sqrt{\varepsilon}}\int_{-\infty}^{0}e^{\frac{s}{\varepsilon}}z_{s}(\omega)\mathrm{d}s.
\end{aligned}
\end{eqnarray}

We omit the higher order term of $\varepsilon$ to plot the slow manifolds $h^{\varepsilon}(\omega,\xi)$ in (\ref{exsmf1})  and $h^{\mu,\varepsilon}(\omega,\xi)$ in (\ref{exsmf2}) in Figure \ref{figsm}. The random slow manifold of (\ref{exsmf1}) remains the same in figure \ref{figsm} to exhibit the approximation procedure. Figure $\ref{figsm}$ shows that the slow manifold of the original system ($\ref{exorgSDE}$) will be approximated by that of the Wong-Zakai system ($\ref{exnewSDE}$) when parameter $\mu$ goes to zero and $\varepsilon$ sufficiently small.

\begin{figure}[H]
\centering
\includegraphics[width=2in]{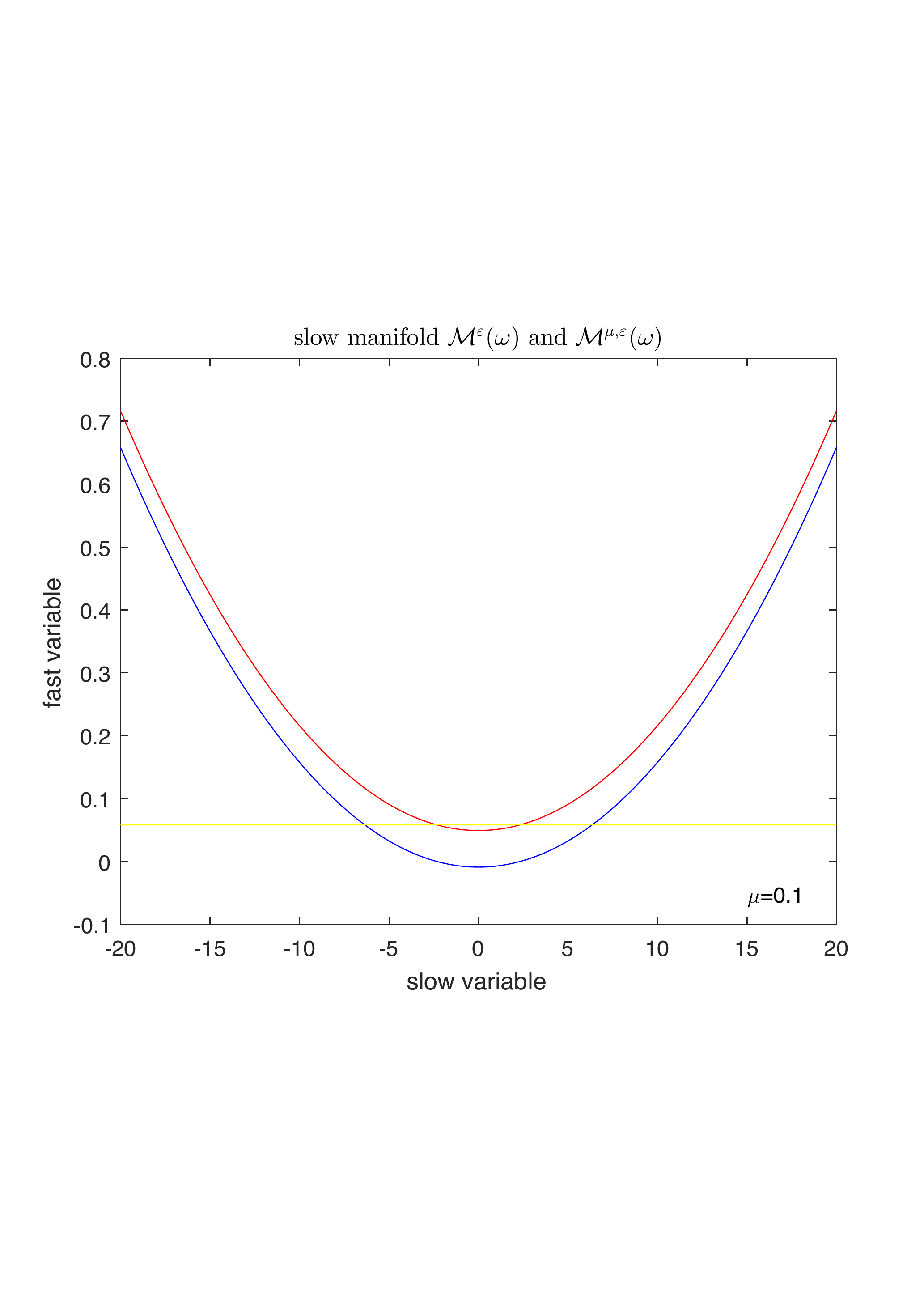}
\includegraphics[width=2in]{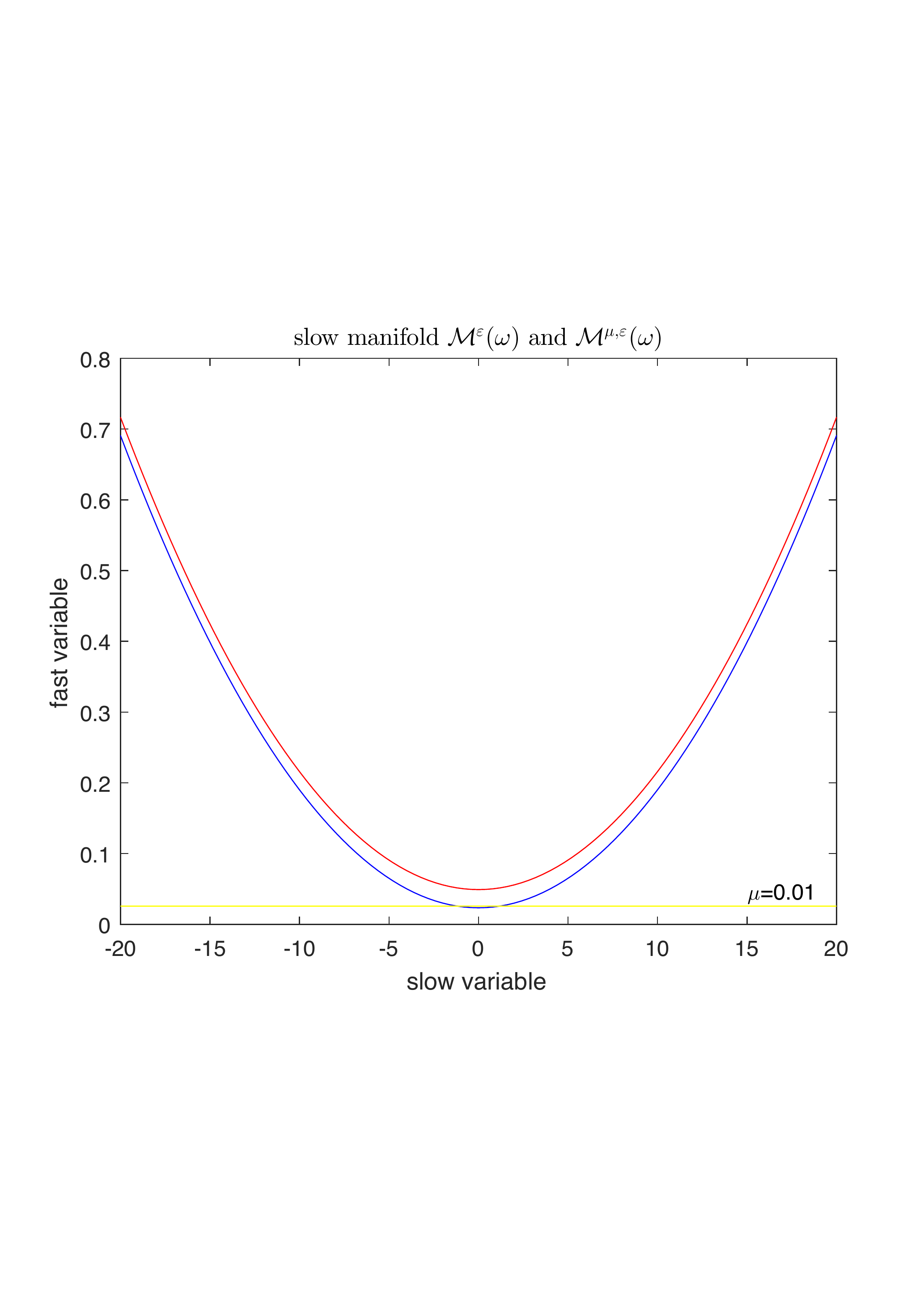}
\includegraphics[width=2in]{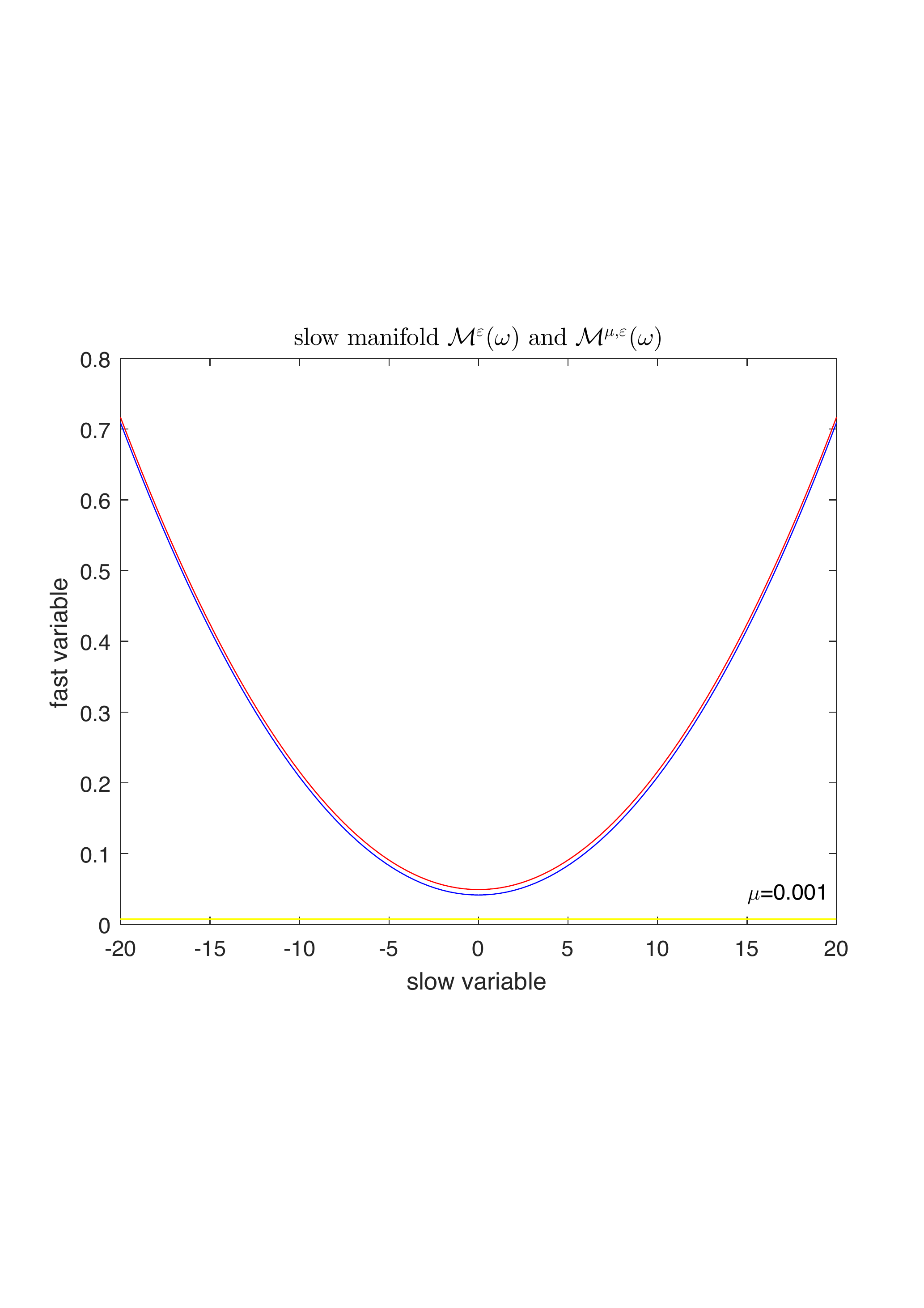}
\caption{(Color online) The slow manifold (one fixed sample) of system (\ref{exorgSDE}) with red line. And the slow manifold (the same sample with system (\ref{exorgSDE})) of system (\ref{exnewSDE}) with blue line. The Wong-Zakai approximation parameter $\mu=0.1$, $0.01$ and $0.001$ in the left, center and right of the figure respectively. The error is plotted with yellow line.}\label{figsm}
\end{figure}
Figure \ref{figsm2} shows that the evolvement of one sample of the random slow manifold will be affected by the noise through the Wiener shift and oscillate like the solution of stochastic dynamical system. It also shows that the graph of slow manifold of system ($\ref{exnewSDE}$) is more smoother than that of system ($\ref{exorgSDE}$) as time evolves.
\begin{figure}[H]
\centering
\includegraphics[width=2in]{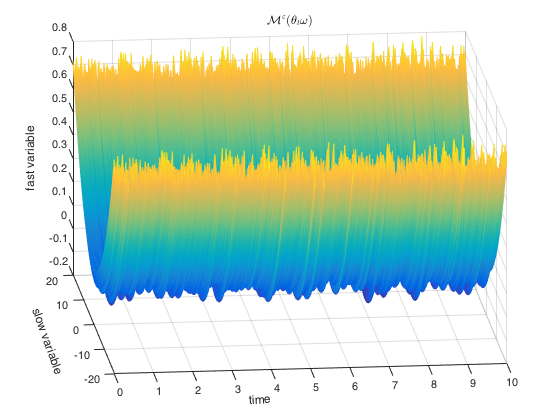}
\includegraphics[width=2in]{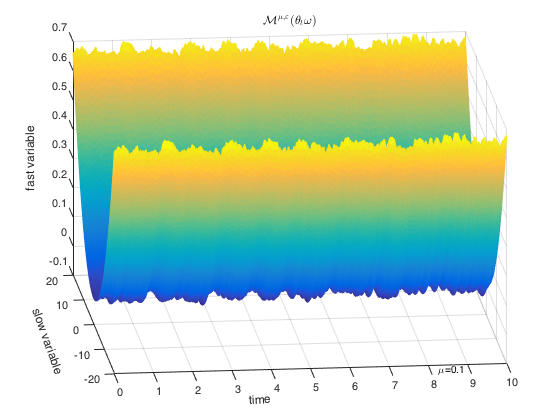}
\includegraphics[width=2in]{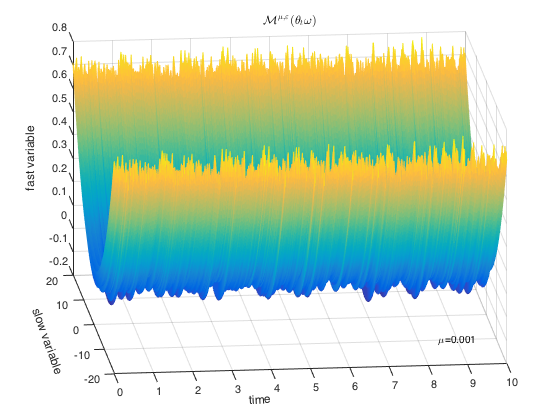}
\caption{(Color online) The random slow manifold of system (\ref{exorgSDE}) with one sample (the same used in figure \ref{figsm}) evolving under the Wiener shift in the first picture. The random slow manifold of system (\ref{exnewSDE}) with the same sample evolving under the Wiener shift with the Wong-Zakai approximation parameter $\mu=0.1$ and $0.001$ in the second and the third pictures respectively.}\label{figsm2}
\end{figure}

In order to show the exponential tracking property between the two different systems, we plot the graph of the random slow manifold of system ($\ref{exorgSDE}$) and ($\ref{exnewSDE}$) as time evolves and one solution of the original system ($\ref{exorgSDE}$) in Figure \ref{figexptrack}. 
\begin{figure}[H]
\centering
\includegraphics[width=2.5in]{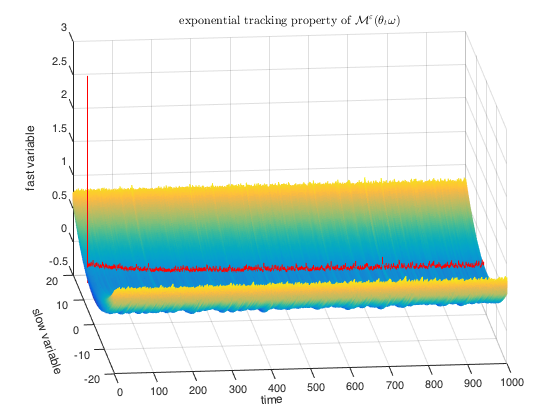}
\includegraphics[width=2.5in]{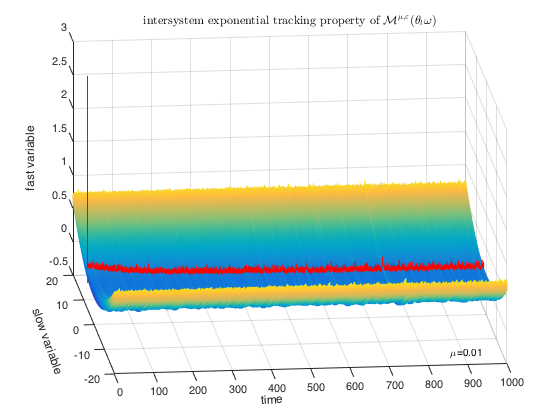}
\begin{center}(a)\hspace{8cm}(b)\end{center}
\caption{(Color online) Intersystem exponential tracking property when $\mu=0.01$, $\varepsilon=0.01$. One solution of the original system ($\ref{exorgSDE}$) with red line in both (a) and (b). The original random slow manifold of system ($\ref{exorgSDE}$) in (a) and the Wong-Zakai random slow manifold of system ($\ref{exnewSDE}$) in (b) as time evolves respectively.} \label{figexptrack}
\end{figure}
At last, we derive the reduced system of the original system ($\ref{exorgSDE}$) using the expansions to order o($\varepsilon^{2}$) of the original random slow manifold ($\ref{exsmf1}$) and Wong-Zakai random slow manifold ($\ref{exsmf2}$), respectively.
The reduction system of system ($\ref{exorgSDE}$) using the original random slow manifold ($\ref{exsmf1}$) of system ($\ref{exorgSDE}$) is
\begin{eqnarray}\label{ex:reduce orig}
\dot{v_{t}}^{\varepsilon}=0.001*v_{t}^{\varepsilon}-a*h^{\varepsilon}(\theta_{t}\omega,v^{\varepsilon}_{t})*v_{t}^{\varepsilon},
\end{eqnarray}
where
\begin{eqnarray*}
\begin{aligned}
h^{\varepsilon}(\theta_{t}\omega,v^{\varepsilon}_{t})
=&\frac{(v^{\varepsilon}_{t})^{2}}{600}+\varepsilon \bigg[-\frac{0.001(v^{\varepsilon}_{t})^{2}}{300}+\frac{0.1(v^{\varepsilon}_{t})^{4}}{180000}-\frac{0.01(v^{\varepsilon}_{t})^{2}}{300}\int_{-\infty}^{t}(s-t)e^{s-t}\mathrm{d}B_{s}(\omega)\bigg]+o(\varepsilon^{2})\\
&+\frac{\sigma}{\sqrt{\varepsilon}}\int_{-\infty}^{t}e^{\frac{s-t}{\varepsilon}}\mathrm{d}B_{s}(\omega).\\
\end{aligned}
\end{eqnarray*}

The reduction system of system ($\ref{exorgSDE}$) using Wong-Zakai random slow manifold ($\ref{exsmf2}$) of system ($\ref{exnewSDE}$) is
\begin{eqnarray}\label{ex:reduce wongzakai}
\dot{v_{t}}^{\varepsilon}=0.001*v_{t}^{\varepsilon}-a*h^{\mu,\varepsilon}(\theta_{t}\omega,v^{\varepsilon}_{t})*v_{t}^{\varepsilon},
\end{eqnarray}
where
\begin{eqnarray*}
\begin{aligned}
h^{\mu,\varepsilon}(\theta_{t}\omega,v^{\varepsilon}_{t})
=&\frac{(v^{\varepsilon}_{t})^{2}}{600}+\varepsilon \bigg[-\frac{0.001(v^{\varepsilon}_{t})^{2}}{300}+\frac{0.1(v^{\varepsilon}_{t})^{4}}{180000}-\frac{0.01(v^{\varepsilon}_{t})^{2}}{300}\int_{-\infty}^{t}(s-t)e^{s-t}z_{s}^{\mu}\mathrm{d}s\bigg]+o(\varepsilon^{2})\\
&+\frac{\sigma}{\sqrt{\varepsilon}}\int_{-\infty}^{t}e^{\frac{s-t}{\varepsilon}}z_{s}^{\mu}\mathrm{d}s(\omega).
\end{aligned}
\end{eqnarray*}
The estimation to system parameter $a$ using the  reduction system (\ref{ex:reduce orig}) based on the original random slow manifold is shown at the left side in figure \ref{estimation}. The estimation to system parameter $a$ using the  reduction system (\ref{ex:reduce wongzakai}) based on Wong-Zakai random slow manifold is shown at the right side in figure \ref{estimation}.

\begin{figure}[H]
\centering
\includegraphics[width=2.5in]{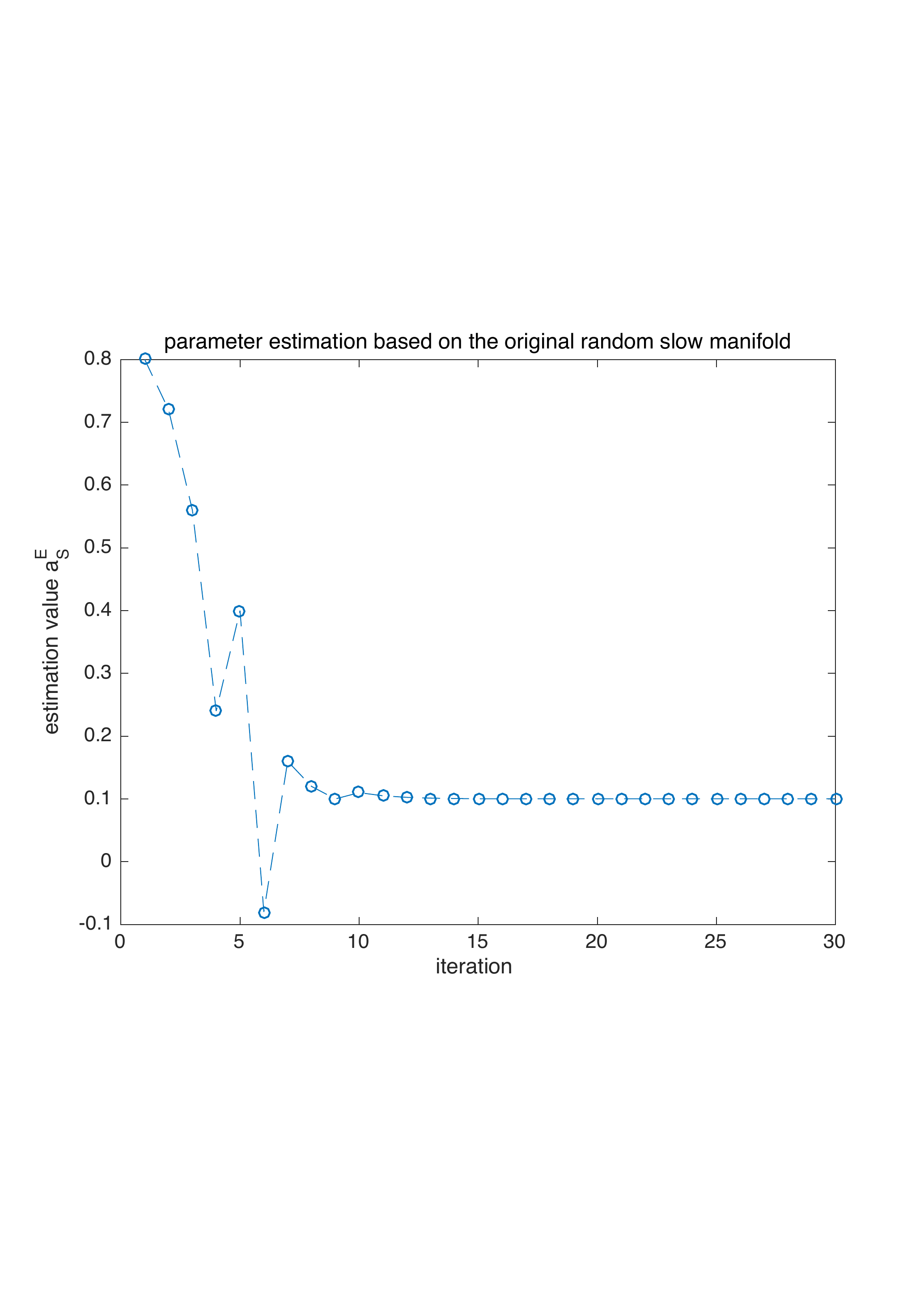}
\includegraphics[width=2.5in]{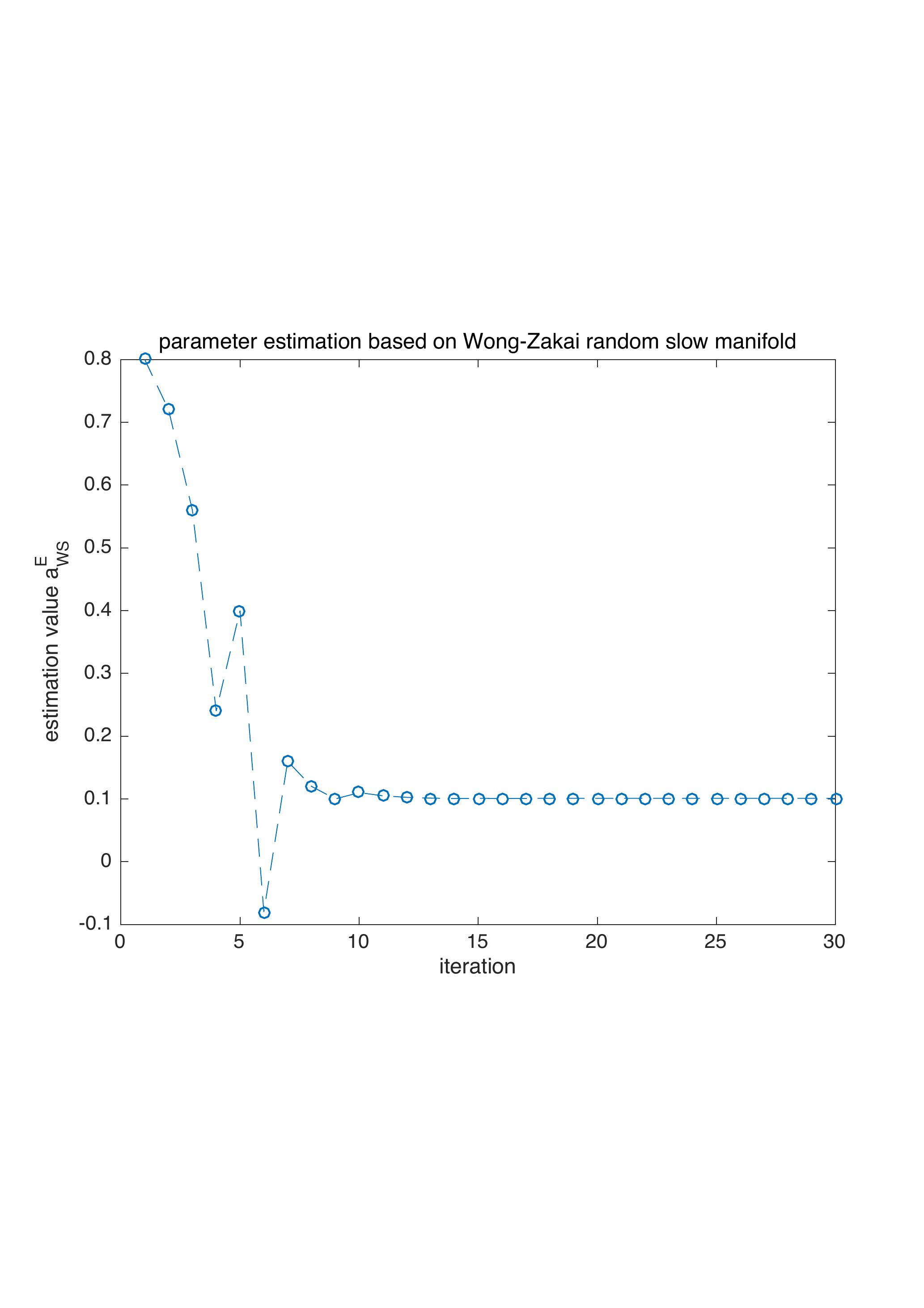}
\caption{The left side is the parameter estimation using system (\ref{ex:reduce orig})  with $\varepsilon=0.1$ and $a=0.1$. After $30$ iterations and $347.463716$ seconds, the estimation value $a_{S}^{E}$ is $0.1002$ with error $1.7547\times10^{-4}$, and the value of objective function is $0.0011$.\\
The right side is the parameter estimation using system (\ref{ex:reduce wongzakai})  with $\varepsilon=0.1$, $\mu=0.01$ and $a=0.1$. After $30$ iterations and $273.192621$ seconds, the estimation value $a_{WS}^{E}$ is $0.1009$ with error $9.3110\times 10^{-4}$, and the value of objective function is $0.0027$.} \label{estimation}
\end{figure}
From figure \ref{estimation}, the error of parameter estimation based on Wong-Zakai random slow manifold is nearly equal to that based on the original random slow manifold. While the simulation of parameter estimation based on Wong-Zakai random slow manifold needs less time than that based on the original random slow manifold. This indicates that the parameter estimation method based on Wong-Zakai random slow manifold is a good approximation to the method based on the original random slow manifold. Hence it is a good approximation method to the method using the original system directly according to \cite{Ren jian}.

Our parameter estimation method based on Wong-Zakai random slow manifold has three benefits.  First, it decrease the amount of  observational information, compared to the method using the original system. The former method only needs to know the value of slow variables whereas the latter method   needs to observe both the fast   and   slow variables.
The second benefit is that it reduces the cost for computation,  compared to the method based on the original random slow manifold. This can be seen from the reduction of simulation time. The third benefit is due to the simplification in numerical  simulation. It is easier to simulate RDEs than SDEs. A  RDE can simulated by deterministic Nelder-Mead simulation method (sample-wisely), but an SDE must use stochastic Nelder-Mead simulation method which is more complicated.

The accuracy of this estimation method also reflects the validity of our Wong-Zakai approximation for random slow manifold to  some extent.

\bigskip
{\bf Acknowledgements}
The authors thank Xiujun Cheng, Jian Ren and Xiaoli Chen for useful discussions about the program for parameter estimation, and  Hua Zhang for pointing out the theoretical basis about the computation of stochastic integration.

\end{document}